\theoremstyle{definition}
\numberwithin{equation}{subsection} 
\newtheorem{guess}{theorem}[subsection]
\newtheorem{rem}[guess]{Remark}
\newtheorem{defi}[guess]{Definition}
\newtheorem{thm}[guess]{Theorem}
\newtheorem{lem}[guess]{Lemma}
\newtheorem{prop}[guess]{Proposition}
\newtheorem{Cor}[guess]{Corollary}
\newtheorem{prob}[guess]{Problem}
\newtheorem{claim}[guess]{Claim}
\newcommand{\gfr}{\mathfrak{g}}
\newcommand{\cZ}{\mathcal{Z}}
\newcommand{\cO}{\mathcal{O}}
\newcommand{\cG}{\mathcal{G}}
\newcommand{\cP}{\mathcal{P}}
\newcommand{\Jac}{\mathrm{Jac}}
\newcommand{\Aut}{\mathrm{Aut}}
\newcommand{\ram}{\mathrm{ram}}
\newcommand{\Img}{\mathrm{Img}}
\newcommand{\Ker}{\mathrm{Ker}}
\newcommand{\Tor}{\mathrm{Tor}}
\newcommand{\rank}{\mathrm{rank}}
\newcommand{\supp}{\mathrm{supp}}
\newcommand{\lra}{\longrightarrow}
\newcommand{\hra}{\hookrightarrow}
\newcommand{\ra}{\rightarrow}
\newcommand{\ol}{\overline}
\newcommand{\ul}{\underline}
\newcommand{\WW}{\mathbb{W}}
\newcommand{\VV}{\mathbb{V}}
\newcommand{\PP}{\mathbb{P}}
\newcommand{\GG}{\mathbb{G}}
\newcommand{\CC}{\mathbb{C}}
\newcommand{\Ext}{\mathrm{Ext}}
\newcommand{\Hom}{\mathrm{Hom}}
\newcommand{\Id}{\mathrm{Id}}
\newcommand{\pardeg}{\mathrm{pardeg}}
\newcommand{\GL}{\mathrm{GL}}
\newcommand{\SL}{\mathrm{SL}}
\newcommand{\SO}{\mathrm{SO}}
\newcommand{\Sp}{\mathrm{Sp}}
\newcommand{\Spin}{\mathrm{Spin}}
\newcommand{\lrn}{\cO_y/m_y^{|\pi_y^*|}}
\begin{document}

\title{ Criteria for existence of stable parahoric $\SO_n$, $\Sp_n$ and $\Spin$ bundles on $\PP^1$}

\author{Yashonidhi Pandey}
\address{ 
Indian Institute of Science Education and Research, Mohali Knowledge city, Sector 81, SAS Nagar, Manauli PO 140306, India, ypandey@iisermohali.ac.in, yashonidhipandey@yahoo.co.uk}

\begin{abstract} Let $p: Y \ra X$ be a Galois cover of smooth projective curves over $\CC$ with Galois group $\Gamma$. This paper is devoted to the study of principal orthogonal and symplectic bundles $E$ on $Y$ to which the action of $\Gamma$ on $Y$ lifts. We notably describe them intrinsically in terms of objects defined on $X$ and call these objects parahoric bundles. We give necessary and sufficient conditions for the non-emptiness of the moduli of  stable (and semi-stable) parahoric special orthogonal, symplectic and spin bundles on  the projective line $\PP^1$. 

\end{abstract}
\subjclass[2000]{14L30,14L24,05E99}
\keywords{Parahoric bundles, Multiplicative Horn Problem, Gromov--Witten numbers, group schemes, unitary representations, Fuchsian group, Deligne-Simpson Problem}
\maketitle

\tableofcontents

\section{Introduction}
\subsection{Motivation}
Let $\ol{C_i}$ be given conjugacy classes of finite order in $K_G$  where $K_G$ is a compact group. For $K_G$ simply--connected, Teleman and Woodward \cite{tw} gave a criteria to decide in finite time whether it is possible to lift an element $C_i \in \ol{C_i}$  such that $\prod C_i= \Id$. This problem is known as the Multiplicative Horn problem or the Deligne-Simpson Problem.

Now let us first explain the main application of this paper. For this, recall that a subset $H$ of a maximal compact subgroup $K$ of a group $G$ is said to be {\it irreducible} if $$ \{Y \in \gfr| ad h(Y) =Y, \forall h \in H\} = Z(\gfr).$$
\begin{prob} Let $\ol{C_i}$ be given conjugacy classes of finite order in $K_G$  where $K_G$ is the maximal compact subgroup of $G=\SO_n(\CC), \Sp_{2n}(\CC)$ or $\Spin_n(\CC)$. Can one decide in finite time whether it is possible to lift an element $C_i \in \ol{C_i}$ from every conjugacy class such that $\prod C_i= \Id$ and such that $\{ C_i \}$ form an irreducible set?
\end{prob}
 Our main interest is to find a numerical criterion in terms of the eigenvalues appearing in $\ol{C_i}$.
Earlier, such a numerical criterion for $K_G= U_2$  was given by \cite[I. Biswas]{biswas}.


These problems admit an algebraic reinterpretation which is surely well known to knowledgeable experts. For example, by the main theorem of \cite[Mehta-Seshadri]{ms} for $K_G=U_n$ they admit an affirmative solution if and only if there exists a stable (resp. semi--stable) parabolic vector bundle  with parabolic flags and {\it rational} weights prescribed by the conjugacy classes $\ol{C_i}$ on the complex projective line $\PP^1_\CC$. After the recent work of Balaji-Seshadri \cite[v3,Thm 1.0.3 (2) and (4)]{vbcss}, if $K_G$ is the maximal compact of a semi-simple simply-connected group $G$, then this problem is equivalent to the existence of stable (resp semi--stable) parahoric $G$-bundles on $\PP^1_\CC$.


Now we wish to explain our motivation. 
Our interest is to find checkable conditions, given weights, for the non-emptiness of moduli of stable  parahoric torsors on $\PP^1$. It is well known that the moduli space has the expected dimension if and only if there exists a stable bundle.


\subsection{Statement of the main result} \label{mainstatement}
Since the conditions for the existence of {\it stable} parahoric symplectic and special orthogonal bundles on $\PP^1$ are rather technical for this introduction, so we refer the reader to Theorem \ref{conditions} and \ref{conditionssymp}. For semi-stability, they agree with the conditions in \cite{tw}.

We now state the conditions of our criteria in terms of inequalities for the main application for the symplectic group. So we suppose here that the symplectic form is of form $\begin{pmatrix} 0 & J \\ -J & 0 \end{pmatrix}$. Let $\ol{C_i}$ be conjugacy classes of $\Sp_{2n}$ in the following form: $(exp(2 \pi i \lambda_{2n}) , \cdots , exp(2 \pi i  \lambda_{1}))$ where $\lambda_i$ are rational numbers such that such that $\lambda_i + \lambda_{2n+1-i}=0 (\text{mod} 1)$, $$1 > \lambda_{2n} \geq \lambda_{2n-1} \cdots \geq \lambda_1 \geq 0$$ and $0 \leq \lambda_i <1 $. We shall call this form standard and denote by $\alpha_j=(\lambda^j_1, \cdots, \lambda^j_{2n})$  the conjugacy class $\ol{C_j}$. Now we change $\alpha_j$ to $$\tilde{\alpha_j} = (\lambda_{n}, \lambda_{n-1}, \cdots, \lambda_1, -\lambda_1, -\lambda_2, \cdots, -\lambda_{n}).$$

The Gromov--Witten numbers are defined for the collection of a vector bundle on $\PP^1$ together with fixed parabolic datum. This is equivalent to fixing the conjugacy classes $\ol{C_i}$ and the Fuchsian group $\pi$. Here $\pi$  is the Deck transformation group over $\PP^1$ of the simply connected cover $p_1: \tilde{Y} \ra Y$, where $p: Y \ra \PP^1$ is a Galois cover  i.e of the map $p \circ p_1: \tilde{Y} \ra \PP^1$ . Their precise  definition is too technical for this introduction, so we refer to subsection \ref{gwn}. Let us content ourselves with the symplectic case in which they  count  the number of isotropic sub-bundles $W$ of degree $-d$ and rank $r$  of the trivial bundle such that the fiber $W_w$, for $w$ a parabolic point, lies in a certain Schubert variety prescribed by the parabolic datum.  


\begin{thm} Let $\{\ol{C_w}\}_{w \in R}$ be conjugacy classes of $\Sp_{2n}$ in the standard form. Then it is possible to pick elements $C_w \in \ol{C_w}$ such that $\prod C_w= \Id$ if and only if given any $1 \leq r \leq 2n$ and any choice of subsets $\{I_w\}_{w \in R}$ of cardinality $r$ of $\{1, \cdots, 2n\}$, whenever $<\{ \sigma_{I_w}\}_{w \in R}>_d=1$ then ${\sum_{w \in R}  \lambda_{I_w}(\tilde{\alpha}^{\bullet}_w) -d} \leq 0.$ The elements $\{ C_w \}$ form an irreducible set if and only if  whenever $<\{ \sigma_{I_w}\}_{w \in R}>_d \neq 0 $ or is $\infty$ then $${\sum_{w \in R}  \lambda_{I_w}(\tilde{\alpha}^{\bullet}_w) -d} < 0.$$
\end{thm}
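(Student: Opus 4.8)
The plan is to chain together two structural reductions with a Schubert--calculus computation, running the ``semistable'' and ``stable'' halves of the statement in parallel.

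First I would pass from matrices to bundles. By the algebraization of \S\ref{ai} together with the parahoric analogue of the Mehta--Seshadri theorem \cite{ms} proved in \cite{vbcss}, choosing $C_w\in\tilde C_w$ with $\prod_{w\in R}C_w=\Id$ is equivalent to the existence of a polystable parahoric $\Sp$-bundle on $\PP^1$ of parahoric degree $0$ whose local type at each $w\in R$ is the datum $\tilde{\alpha}^{\bullet}_w$ encoded by the numbers $\lambda^w_i$; moreover the tuple $\{C_w\}$ forms an irreducible set precisely when this bundle can be taken \emph{stable} (automorphism group equal to the centre). So it suffices to characterise non-emptiness of the semistable, resp.\ stable, locus of this moduli problem by the stated inequalities.

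Next I would trade parahoric for parabolic. Using the explicit description of $\Gamma$-$\Sp$-bundles as (degenerate) symplectic parabolic bundles (Definition \ref{qpsb}, cf.\ Definition \ref{qpob}) together with the passage from parahoric to parabolic bundles that preserves (semi)stability --- the technical heart of the paper --- the parahoric datum is replaced by a genuine parabolic $\Sp$-bundle on $\PP^1$: a rank-$n$ bundle with an everywhere nondegenerate symplectic form and isotropic parabolic flags at the points of $R$, carrying weights obtained by normalising the $\lambda^w_i$; the hypothesis that the $\tilde C_w$ are in \emph{standard form} is exactly what makes this weight translation clean and keeps the total degree $0$. Since the passage matches stable with stable and polystable with polystable, the question becomes: do there exist isotropic flags at the points of $R$ and a degree-$0$ symplectic bundle carrying them with the prescribed weights that is parabolically semistable, resp.\ stable? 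I would then run the by-now-standard genericity argument (as for $\GL_n$ in \cite{belkale}): since one may move the bundle in its moduli space and move the flags, non-emptiness of the semistable locus is equivalent to semistability of the \emph{trivial} symplectic bundle $(\cO^{\,n},\langle\ ,\ \rangle_{\std})$ equipped with \emph{generic} isotropic flags at the points of $R$ and the given weights, and likewise for stability. For a symplectic bundle, parabolic (semi)stability need only be tested against isotropic subbundles, and a maximal isotropic subspace of $\Sp_n$ has rank $\le n/2$ --- this is the source of the range $1\le r\le n/2$. A rank-$r$ isotropic subbundle $W\subset\cO^{\,n}$ of degree $-d$ in prescribed relative position $I_w$ to the flag at $w$ is the same datum as a degree-$d$ map $\PP^1\to\Sp_n/P_r$ meeting the Schubert varieties $\sigma_{I_w}$; hence for generic flags such a $W$ exists iff $\langle\{\sigma_{I_w}\}_{w\in R}\rangle_d\neq 0$, or is $\infty$ in the sense that a positive-dimensional family of such maps occurs. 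The parabolic degree of $W$ is $-d+\sum_{w\in R}\lambda_{I_w}(\tilde{\alpha}^{\bullet}_w)$, so non-emptiness of the semistable locus forces $\sum_{w}\lambda_{I_w}(\tilde{\alpha}^{\bullet}_w)-d\le 0$ for every such triple, and of the stable locus forces the strict inequality; conversely, if the inequalities hold the trivial symplectic bundle with generic isotropic flags has no destabilising isotropic subbundle, hence is semistable, resp.\ stable. To get the sharp list one then shows that among the inequalities indexed by triples with $\langle\{\sigma_{I_w}\}\rangle_d\neq 0$ only those with $\langle\{\sigma_{I_w}\}\rangle_d=1$ are needed for semistability (the rest being automatic on the closed cone), by the deformation/factorisation argument in the quantum cohomology of $\Sp_n/P_r$ --- the symplectic counterpart of the Belkale--Agnihotri--Woodward reduction --- while for stability the full set ``$\neq 0$ or $\infty$'' with strict inequalities is kept.

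I expect the main obstacle to be the bookkeeping in the middle two steps: checking that the parahoric-to-parabolic passage carries the local conjugacy-class data to parabolic weights so that the destabilising inequality comes out exactly as $\sum_w\lambda_{I_w}(\tilde{\alpha}^{\bullet}_w)-d\le 0$ with no correction term; justifying the reduction to the trivial underlying symplectic bundle with generic flags (genericity of the trivial bundle in the moduli of degree-$0$ semistable parabolic $\Sp$-bundles on $\PP^1$); and making the dictionary between rank-$r$ isotropic subbundles of degree $-d$ and degree-$d$ maps to $\Sp_n/P_r$ completely precise, including the $r\le n/2$ truncation and the meaning of $\langle\ \rangle_d=\infty$. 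The quantum-Horn reduction from ``$\neq 0$'' to ``$=1$'' inequalities is deep but can be imported essentially verbatim from the $\GL$ case once the symplectic Schubert calculus on $\Sp_n/P_r$ has been set up.
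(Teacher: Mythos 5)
Your proposal follows the paper's proof essentially step for step: algebraization via \cite{vbcss}, the parahoric-to-parabolic Hecke modification of Proposition \ref{parah2parab}, the reduction to the trivial symplectic bundle with generic flags via Ramanathan's rigidity on $\PP^1$ and openness of (semi)stability (Propositions \ref{openness} and \ref{redtotriv}), and the dictionary between isotropic subbundles and Gromov--Witten numbers that yields Theorem \ref{conditionssymp}. The one cosmetic divergence is at the last step: where you invoke a quantum-cohomology factorisation argument to cut the semistability list down to the states with $\langle\{\sigma_{I_w}\}\rangle_d=1$, the paper obtains this more cheaply from the uniqueness of the maximal destabilising isotropic subbundle in the Harder--Narasimhan sense.
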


Slightly more involved conditions get formulated for the case $G=\SO_n(\CC)$. By Proposition \ref{rama} we reduce the above question for the Spin group to the group $\SO_n$ (cf. Remark \ref{spin}).  


\subsection{Computability}
These Gromov--Witten numbers are computable for any $G/P$ where $P$ is any parabolic subgroup (cf. \cite{woodward}). To determine the existence of a (semi)-stable bundle, it is clear that we only need to check for only finitely many choices of degrees of sub-bundles , viz, since the weights are all positive and bounded, it is possible to give an explicit lower bound such that sub-bundles with even smaller degree  can never be destabilizing . Secondly, though Gromov-Witten numbers may be $\infty$ in some cases but we only need to know when they are non-zero. This can be detected on a computer. Thus, after finitely many computations, we need to check only finitely many inequalities to determine whether there exists a stable or semi-stable parahoric special orthogonal or symplectic bundle. In this sense, we have a {\it checkable} criteria.

\subsection{Our perspective}
Our approach to the solution of the above problems consists therefore of two parts. First we show an algebraic reinterpretation of the problem as in the $U_n$ case and then we solve the algebraic problem.  Before we proceed to explain the algebraic reinterpretation, we need to introduce a few concepts and a more general cadre. 

Let $X$ be a smooth projective curve over the field of complex numbers $\CC$. Let $p: Y \ra X$ be a Galois cover of smooth projective curves with Galois group $\Gamma$. In this paper our interest is in the case when $p$ is {\it ramified}. By a $\Gamma$-$G$ bundle, we mean a principal $G$-bundle $E$ on $Y$ such that the action of $\Gamma$ lifts to $E$. 

In the case $G=\GL_n$, following \cite[Mehta-Seshadri]{ms} one knows that $\Gamma$-$\GL_n$-bundles are described intrinsically on $X$ as parabolic vector bundles , i.e., vector bundles on $X$ together with a flag structure equipped with weights on some points of $X$. 


In this paper we study $\Gamma$-$G$ for the case of classical groups $G$ with the view towards obtaining more explicit intrinsic descriptions which we call parahoric $G$-bundles as in \cite[Balaji-Seshadri]{vbcss}.   Our objective is to show how the results in \cite[BS]{vbcss} can be seen more explicitly in terms of a vector bundle $W$ equipped with an {\it everywhere non-degenerate $\Gamma$-invariant quadratic form $q'$} on some Galois cover. This approach is closer in spirit to Seshadri \cite[CSS]{cssram} and to Ramanan \cite[R]{sr}. In particular since the group $O_n$ is not simply connected and disconnected, the cases of these groups are not directly covered by \cite{vbcss} and \cite[J.Heinloth]{heinloth}.

We show in Theorem \ref{parahoricorth} that $\Gamma$-$O_n$-bundles can be described intrinsically on $X$ as {\it parabolic vector bundles} with weights symmetric about $1/2$ together with a quadratic structure: the underlying vector bundle $V$ is endowed with only a {\it generically non-degenerate quadratic form $q$} having {\it ``singularities of order at most one at parabolic points,''} the underlying vector space at branch points is equipped with local quadratic structures and isotropic flags compatible with $q$ (cf Definition \ref{qpob} ). The weights and flags are supported at the branch points of $p: Y \ra X$. We call these bundles degenerate orthogonal bundles with flags. 

A similar intrinsic description for $\Gamma$-$\SO_n$  bundles as degenerate orthogonal bundles with additional structure is also given. We then show that $\Gamma$-$\SO_n$-bundles can also be interpreted as parahoric special orthogonal bundles in the sense of \cite{vbcss}.

In the short section on symplectic bundles we state the important definitions and results in the symplectic case. The proofs are very similar to the orthogonal case, so we omit them.

Now we wish to explain the relevance of the concept of weights in the context of parahoric $G$-bundles. Firstly, as in the case of $U_n$ i.e parabolic vector bundles, the definition of stability and semi-stability of parahoric orthogonal and symplectic bundles are described in terms of weights (for a precise definition of stability cf \ref{defss}). Secondly, weights provides a candidate isomorphism  identifying the local automorphism group $G(\cO_y)^{\Gamma_y}$ of $\Gamma$-$O_n$-bundles with the local automorphism group of (quasi)-parahoric orthogonal bundles or what is the same with Bruhat--Tits group schemes. This is proved for general $G$ in \cite[Thm 2.3.1, version 3]{vbcss}. In our private calculations, for more transparency for the case of $G=O_n$ we could show a natural isomorphism between $O_n(\cO_y)^{\Gamma_y}$ and  the local automorphism group of degenerate orthogonal bundles. For others groups $G=\SO_n$ and $\Sp_{2n}$ the proofs are similar. Thirdly, in Proposition \ref{parahnotparab} we give a criteria 
in terms
weights to decide when a $\Gamma$-$G$ bundle can be described intrinsically by a {\it parabolic} $G$-bundle for the explicit cases of $G$ we consider in this paper. For a general $G$ this has been explained in \cite[Remark 1.0.4 (3), Introduction]{vbcss}.



Then using the explicit description, for $G=\SO_n(\CC), \Sp_{2n}(\CC)$ and $ \Spin_n(\CC)$  we give necessary and sufficient conditions for the existence of stable (and semi-stable) parahoric $G$-bundles on  $\PP^1$ for any choice of weights - rational or not.
More precisely, in Theorems \ref{conditions} and \ref{conditionssymp} and Remark \ref{spin}, we show some inequalities  in terms of Gromov--Witten numbers whose satisfaction is the necessary and sufficient condition for the existence of stable (and polystable) parahoric special orthogonal, symplectic and Spin bundles on $\PP^1$.

\subsection{Comparison with \cite{tw}}
In \cite[Teleman-Woodward]{tw} the multiplicative Horn problem for any simply--connected group is solved, while we, by restricting ourselves to some special groups, also give conditions for $\{C_i\}$ to form an irreducible set. In other words, they solve the semi-stability problem while we also address to the stability problem, but for some special groups. To the best of our knowledge, the only other known work for stability is by I. Biswas for $U_2$. 

The strategy in \cite{tw} consists of two parts- first to solve for generic weights by parabolic bundle theory and then to employ a density argument by invoking a result of \cite[E.Meinrenken, C.Woodward]{ma} to find the inequalities defining the semi-stable polytope.


Instead our approach in this paper is to deal with {\it all weights}, in other words {\it all conjugacy classes} by working with parahoric bundles. In particular, our conjugacy classes are allowed to lie on the walls of the Weyl alcove. Thus even for semi-stable bundles, our strategy is different from \cite{tw} and we furnish a new proof. Further, openness of stability for parahoric bundles is proved, and then used to  obtain conditions for the existence of stable bundles. 

Let $A$ denote the Weyl alcove and $\Delta^{ss} \subset A^b$ the semi-stable and $\Delta^s \subset A^b$ the stable polytopes respectively. Unlike by \cite[Cor 4.13]{ma} for semi-stable polytope, we couldn't find a density argument to describe the stable polytope. Recently from P. Belkale, I learnt a polytope argument (cf Prop \ref{belkale}) to show that the interior of semi-stable polytope is contained in the stable polytope. The question of describing the stable polytope is reduced therefore to deciding if points on the facets of the semi-stable polytope  belong to the stable polytope or not.  Note here that such points necessarily would lie on some boundary point of $A^b$ (i.e wall of some factor alcove) but not on any facet defined by an inequality, because stable are afortiori semi-stable. To the best of our knowledge, besides our conditions in Theorem \ref{conditions} and \ref{conditionssymp}, we know of no other direct way of doing so. In the last section of this paper, we show examples of stable points on the facets of the semi-stability polytope. In conclusion, the inequalities defining the semi-stable polytope do not seem to suffice in describing the stable polytope, the stable inequalities are not simply the strict form of the semi-stable ones.

For semi-stability our inequalities agree with those of \cite{tw} for the groups we consider (cf \ref{croscheck}).


\subsection{comparison with \cite{belkale}}
In \cite{belkale}, the Hecke-modifications may be made at any one of the parabolic points.
In our proof, it must  be made at all points where the parahoric structure is non-trivial.

In our private calculations, we have cross checked our conditions with those of P. Belkale for the case of exceptional homomorphisms $\SL_2 \ra \SO_3$ and $\SL_4 \ra \SO_6$, and our own conditions for the case $\Sp_4 \ra \SO_5$ both for semi-stability (\cite[Thm 7]{belkale}) and stability (cf   \cite[Comment (4) which refers comments to (2) and (3), Page 75]{belkale}).

\subsection{Comments on new ingredients} The main new ingredients are Definitions  \ref{qpob}, \ref{qpsb} and the passage from parahoric to parabolic bundles preserving (semi)-stability. 

We would like to highlight that in the passage from parahoric to parabolic, we may change the quasi-parabolic structure by adding new flags and also the weights by which may become negative (cf Definition \ref{parabss} and subsection \ref{mainstatement} where we change weights). Adding new flags helps to make the quadratic form (which is necessarily degenerate for parahoric bundles) non-degenerate even at the parabolic points.

Let us remark that we are unable to make the said passage work for the case $G=O_n$ and $G_2$. In our private calculations, though we have interpreted $\Gamma$-$G_2$ bundles intrinsically as {\it degenerate Zorn algebra bundles} on $X$, but no such `passage' seems to suggest itself. For the case of $O_n$, the dimension of $F^1_x$ (cf Definition \ref{qpob}) can be {\it odd}, unlike the $\SO_n$ and $\Sp_{2n}$ cases. In the introduction \cite{vbcss}, it is explained that it is not possible to associate to $\Gamma$-$G$ bundles a {\it principal $G$-bundle} on $X$ in general, unlike $G=\GL_n$ . Were such an association possible, then the arguments of deformations of Belkale and Biswas would have sufficed for a general maximal compact $K_G$ . This association is not true even for $\Gamma$-$\SO_n$ and $\Gamma$-$\Sp_{2n}$ bundles (cf section \ref{GammaGnotpar}). But exploiting the 
explicit description in the cases of these groups, it becomes possible to reinterpret the existence question of (semi)-stable parahoric $G$-bundles in terms of parabolic $G$-bundles. This is the other main novelty in this paper and unfortunately also the reason for technicalities. We remark that the study of two-sheeted covers reveals our constructions.




\subsection{Remarks on Notation}
We have followed the system of notation in  \cite[Balaji-Seshadri]{vbcss}. So a Galois cover of smooth projective groups $p: Y \ra X$ has Galois group $\Gamma$ and not $\pi$ as in \cite{cssram}. If $V$ is a vector bundle on a curve $X$ and $x$ is a point of $X$, then to differentiate we denote by $\VV_x$ the stalk of the sheaf of sections of $V$ and the geometric fiber by $V_x$.  Similarly for a quadratic form $q$ on $V$, we denote induced form on the stalk by $\ol{q_x}$  and the evaluation at $x$ by $q_x$. For a group scheme $G \ra X$, we will only need  the stalk at $x$ which will be denoted $G_x$. Through sections 3-5, in the situation of a Galois cover $p: Y \ra X$, the pair $(W,q\rq{})$ will be on $Y$ while $(V,q)$ will be on $X$. We identify a vector bundle $V$ with its sheaf of sections. In the parabolic bundle literature, it has become customary to write $V_*$ for a parabolic vector bundle. So similarly, we shall abbreviate degenerate orthogonal bundles with flags by $(V,q,F^\bullet_\bullet)$ etc. 
Any statement with `(semi)-stability' should be read as two statements, one with stability and the other with semi-stability. Similarly $(\leq)$ should be read as $\leq $ and $<$. We shall write $\SO_n$ instead of $\SO(n)$ and denote pull-backs $p^*(V)$ by $p^*V$, again to lighten notation.
\subsection{Acknowledgements}
These results are part of my post-doctoral work  at the Chennai Mathematical Institute. Prof. C.S. Seshadri suggested this theme to me and encouraged. I would like to offer him my heartiest thanks. I would also like to thank Prof. V. Balaji for illuminating explanations and constant support. He has provided me with guidance right from my `very first' contact with university mathematics. I also thank Prof S. Ramanan for useful discussions and encouragement. I am very grateful to Christopher Woodward and Prakash Belkale for replying to my questions by email and the anonymous referee for explaining the density argument in \cite{tw} to me. My thanks also go to the Institute of Mathematical Sciences, Chennai for providing local hospitality.

 \section{Degenerate Orthogonal bundles with flags}
 We begin by describing our setup.  Let $p: Y \ra X$ be a {\it ramified} Galois cover of smooth projective curves with Galois group $\Gamma$. Let $W$ be a $\Gamma$-vector bundle on $Y$ together with an everywhere non-degenerate quadratic form $q\rq{}$ compatible with $\Gamma$-action. It is well known that (cf \cite{ms}) that the {\it invariant direct image} $p_*^\Gamma W$ acquires a {\it parabolic structure}. The theme of the next three sections is to describe the additional data on the parabolic bundle $p_*^\Gamma W$ arising from the compatibility of $\Gamma$ with $q\rq{}$. 

\begin{rem} The following definition generalizes the notion of {\it degenerate symplectic (resp. orthogonal ) bundle} in \cite{bhosledeg} (cf also Remarks \ref{br2}  and \ref{br3} for (semi)-stability).
\end{rem}

 \begin{defi} We say that a quadratic bundle $q: V \ra V^* \otimes L$ with values in a line bundle $L$ has singularities of order $\leq 1$ if for $S= (V^* \otimes L)/q(V)$ the  natural map 
 $S \ra \oplus_{x \in Supp(S)} S_x $ is an isomorphism. Here $S_x$ is the geometric fiber of $S$.
   \end{defi}
 
 \begin{defi} \label{qpob} A {\it degenerate orthogonal bundle with flags} denoted $(V,q,\{ F^\bullet_x \}_{x \in R}, L)$ is a vector bundle $V$ on $X$ endowed with the datum
\begin{enumerate}
\item a quadratic form $q: V \ra V^* \otimes L$ with singularities  of order $\leq 1$ at a finite subset $R$ of points of $X$,
\item a flag structure $\{0 \} \subsetneq F^{m_x}_x \subsetneq F^{m_x-1}_x  \subsetneq \cdots \subsetneq F^1_x \subsetneq F^0_x = V_x $  of the geometric fiber $V_x$ for each point $x \in R$, where the number $m_x$ can vary with $x \in R$,  
\end{enumerate}

satisfying the conditions
\begin{enumerate}
\item \label{1} (compatibility of quadratic form $q$ and the flags) for every $x \in R$ we have $F^1_x = \Ker(q_x: V_x \ra (V^* \otimes L)_x)$ (here $V_x$ is the geometric fiber of $V$ at $x$ and $q_x$ is the induced morphism).
\item \label{2} By the inclusion $\oplus_{x \in R} F^1_x \hra \oplus_{x \in R} V_x$, we pull-back  as follows:
$$\begin{matrix}
\xymatrix{
0 \ar[r] &  V(- R ) \ar[r] &  V \ar[r] &  \oplus_{x \in R} V_x \ar[r] &  0 \\
0 \ar[r] & V(-  R) \ar[r] \ar[u] & F^1(V) \ar[r] \ar[u] & \oplus_{x \in R} F^1_x \ar[r] \ar[u] & 0 
}
\end{matrix}$$

and denote by $F^1(V)$ the vector bundle so obtained.  Then $q$ restricted to $F^1(V)$ factorizes as $q_1$ through $L(- R)$ :
$$\begin{matrix}
  \xymatrix{
F^1(V) \otimes F^1(V) \ar[r]^{~~~~~~~q} \ar@{.>}[rd]_{q_1} & L \\
                                               & L(-R) \ar[u]
}
 \end{matrix}$$
\item \label{3} denoting  $L(-R)_x$ the geometric fiber, the quadratic form $q^1$ induces a non-degenerate quadratic form $$q_{1,x}: F^1_x \ra L(- R)_x \simeq \CC.$$
\item \label{4} for $i \geq (1+m_x)/2$, the flags $F^i_x$ are isotropic for  $(F^1_x,q_{1,x})$ and the remaining are obtained by ortho-complementation.
\end{enumerate} 
\end{defi}
We will often simply write $(V,q,F^\bullet_\bullet,L)$ for a degenerate orthogonal bundle with flags. We highlight some structures encoded in Definition \ref{qpob} immediately in the following remarks. We will need to refer to them in the course of the proof of Theorem \ref{parahoricorth} where we prove a Galois-theoretic correspondence.


\begin{rem} \label{lfo1} 
 Notice  that  on successive quotients $G^i_x = F^i_x/F^{i+1}_x$ we have  perfect pairings $q_{i,x}: G^{m-i}_x \times G^i_x \ra \CC$ for $1 \leq i \leq m_x/2$ and on quotients $G^{(1+m_x)/2}_x$ (if 
 $m_x$ is odd) and $G^0_x$ we have non-degenerate quadratic forms $q_{(1+m_x)/2, x}$ and $q_{0,x}: V_x/F^1_x \ra L_x/m_x \simeq \CC$ respectively. Notice also that for any $x$ we have $\dim G^0_x + \dim G^{(1+m_x)/2}_x \equiv \rank \, V (mod \, 2)$ if $m_x$ is odd and $\dim G^0_x \equiv \rank \, V (mod \, 2)$ if $m_x$ is even. Conversely, it is clear that the existence of perfect pairings on successive quotients $G^i_x$ for $i \geq 1$ endow $F^1_x$ with a quadratic form and non-degenerate quadratic form on $G^0$ endows, together with $(F^1_x, q_{1,x})$, a degenerate quadratic form at the stalk $\VV_x$ of the desired type.
\end{rem} 

\begin{rem} \label{lfo}  The compatibility of the  global quadratic form $q$ and the local ones can be expressed more explicitly as follows: if we choose a basis $\mathfrak{B}$ of the localization of the free module $\VV_x$ such that $\mathfrak{B}$ induces a basis of the underlying vector space $V_x$ in which $q_{0,x}$ and $q_{1,x}$ can be expressed in the standard anti-diagonal form, then in $\mathfrak{B}$ the quadratic form $\ol{q_x}$ is expressed by
$\begin{bmatrix} J_1 & \\  & t J_2 \end{bmatrix}$, here $t$ is the local parameter of $L_x$ and $J_1$ and $J_2$ are the standard anti-diagonal matrices of sizes  $\dim(V_x/F^1_x)$ and $\dim(F^1_x)$ respectively.
\end{rem}

Many people have considered quadratic bundles with values in a line bundle. So to give an over-arching definition, we have defined degenerate orthogonal bundles with values in a line bundle $L$. This is also the right cadre to define Hecke-modifications by `isotropic subspaces' to which we might come in our future work (\cite{yp2}) but which is not relevant for the issues in this paper. The most important case is when $L=\cO_X$, where we shall often abbreviate to $(V,q,F^\bullet)$. The following proposition shows that the case of a general $L$ reduces to $L= \cO_X$ by taking a square root of $L$, if need be by going to a cover. Its proof is straightforward, so we omit it.

\begin{prop} \label{Ltotriv} If $L$ is a line bundle of odd degree on $X$, then  let $p: \tilde{X} \ra X$ be a two-sheeted cover such that $p^* L$ becomes the square of  $ L_1^2$ else if $\deg(L)$ is even then take $\tilde{X}=X$. Then given a degenerate orthogonal bundle with flags $(V,q,F^\bullet_\bullet, L)$ and a choice of square root $L_1$ of $L$, we can canonically associate to its pullback to $\tilde{X}$ a degenerate orthogonal bundle with values in the trivial bundle $\cO_{\tilde{X}}$.  
\end{prop}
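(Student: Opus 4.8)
The plan is to twist the underlying vector bundle $V$ by the inverse square root $L_1^{-1}$ and check that all of the structures in Definition \ref{qpob} transport cleanly along this twist. First I would reduce to the non-trivial case: if $\deg(L)$ is even, then $L$ already admits a square root on $X$ (possibly after no base change at all, as stipulated), so set $\tilde{X}=X$; if $\deg(L)$ is odd, the existence of the two-sheeted cover $p:\tilde{X}\ra X$ with $p^*L\simeq L_1^2$ is standard (take the cyclic cover associated to a square root of $p^*L$ after pulling back along any degree-two map realizing the class of $L$ in $\Pic(X)/2\Pic(X)$; concretely one can use the $\mu_2$-gerbe / square-root cover construction). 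From now on work on $\tilde{X}$ and write again $(V,q,F^\bullet_\bullet,L)$ for the pullback, so that $L=L_1^2$.

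Next I would set $V' := V\otimes L_1^{-1}$ and define $q' : V' \ra V'^*\otimes\cO_{\tilde X}$ as the composite $V\otimes L_1^{-1}\xrightarrow{q\otimes\mathrm{id}} V^*\otimes L\otimes L_1^{-1} = V^*\otimes L_1 = (V\otimes L_1^{-1})^* = V'^*$, using the canonical identification $V^*\otimes L_1 \simeq (V\otimes L_1^{-1})^*$. I would then check the clauses of Definition \ref{qpob} one at a time. Tensoring by the \emph{line} bundle $L_1^{-1}$ is an exact functor that does not change the support of the cokernel sheaf $S=(V^*\otimes L)/q(V)$ and induces an isomorphism on each geometric fiber $S_x$, so the ``singularities of order $\le 1$'' condition is preserved and $R$ is unchanged. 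The flags $F^\bullet_x$ transport to $F'^\bullet_x := F^\bullet_x\otimes (L_1^{-1})_x$ inside $V'_x = V_x\otimes (L_1^{-1})_x$, and since twisting a fiber by a one-dimensional space is an isomorphism of vector spaces preserving inclusions, the flag data and the numbers $m_x$ are unchanged. Clause \eqref{1}: $\Ker(q'_x)$ is exactly $\Ker(q_x)\otimes(L_1^{-1})_x = F^1_x\otimes(L_1^{-1})_x = F'^1_x$, because twisting by a line is injective on fibers. Clause \eqref{2}: the factorization of $q|_{F^1(V)}$ through $L(-R)$ tensors up to a factorization of $q'|_{F^1(V')}$ through $L(-R)\otimes L_1^{-2} = \cO_{\tilde X}(-R)$, and $F^1(V') = F^1(V)\otimes L_1^{-1}$ since the defining pullback diagram of $F^1(V)$ is preserved by the exact functor $-\otimes L_1^{-1}$. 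Clauses \eqref{3} and \eqref{4}: the induced form $q_{1,x}$ lands in $L(-R)_x$; after the twist it lands in $(L(-R)\otimes L_1^{-2})_x = \cO_{\tilde X}(-R)_x\simeq\CC$, and non-degeneracy of a bilinear/quadratic form on a fixed vector space is unaffected by tensoring the target one-dimensional space, so $q'_{1,x}$ is again non-degenerate; isotropy of the $F^i_x$ is a condition of vanishing of $q_{1,x}$ on certain subspaces, which is manifestly preserved, and ortho-complementation is likewise preserved since it only refers to the (preserved) pairings. Finally I would note that the whole construction is functorial in $L_1$: a different choice of square root differs by a $2$-torsion line bundle and produces an isomorphic degenerate orthogonal bundle after the obvious twist, which is why ``canonically associate'' is justified once $L_1$ is fixed.

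The only genuinely delicate point is the compatibility encoded in clause \eqref{2} together with Remark \ref{lfo}: one must make sure that the line bundle appearing in the factorization really becomes $\cO_{\tilde X}(-R)$ on the nose and that the local model $\mathrm{diag}(J_1, tJ_2)$ of $\ol{q_x}$ goes over to $\mathrm{diag}(J_1, tJ_2)$ with $t$ now a parameter of $\cO_{\tilde X}(-R)$ at $x$ rather than of $L(-R)$ — i.e. that the single factor of ``$t$'' measuring the order-one degeneracy is the one that survives, while the line-bundle twist is absorbed into a change of local trivialization. This is a bookkeeping check with the local parameters, not a conceptual difficulty: since $L_1$ is a line bundle it is locally trivial, so locally near each $x\in R$ the twist $V\mapsto V\otimes L_1^{-1}$ is literally the identity up to a unit, and the local normal form of Remark \ref{lfo} is unchanged. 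Thus the main (and mild) obstacle is purely the careful identification of target line bundles in clauses \eqref{2}--\eqref{3}; everything else is formal exactness and the fact that tensoring by a line bundle is an isomorphism on geometric fibers.
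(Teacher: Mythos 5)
Your proposal is correct, and it follows the same basic strategy as the paper: both define the new bundle as the twist $p^*V\otimes L_1^{-1}$ with the induced form landing in $V'^{*}$, and both observe that the order-one degeneracy and the local normal form of Remark \ref{lfo} are untouched because twisting by a line bundle is invertible. The one genuine difference is in how the flag data is carried over. You transport the flags by the tautological fiberwise isomorphism $V_x\otimes (L_1^{-1})_x\simeq V'_x$, tensoring each $F^i_x$ with the one-dimensional space $(L_1^{-1})_x$; this is clean, manifestly canonical, and makes clauses (1)--(4) of Definition \ref{qpob} follow by pure exactness bookkeeping. The paper instead identifies $F^1_x$ intrinsically with $\Tor^1_{\cO_X}(S,\CC_x)$ (using that $S=V^*\otimes L/q(V)$ has depth one), tracks this identification through the pullback and the twist, and thereby produces an identification of $\ker(p^*q)_{\tilde x}$ with $F^1(V)_x$ that is only well defined up to scalars (it requires writing $L_1$ as a product of line bundles admitting sections). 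What the paper's Tor route buys is that condition (1) -- the new $F^1$ being exactly the kernel of the degenerate form -- holds by construction rather than by a fiberwise check; what your route buys is the elimination of the ``up to scalars'' caveat, which is harmless for transporting subspaces but slightly awkward to state. Your closing remark correctly isolates the only delicate point, namely that the target of the factorization in clause (2) becomes $\cO_{\tilde X}(-R)$ on the nose because $L(-R)\otimes L_1^{-2}=\cO_{\tilde X}(-R)$.
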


\begin{defi} \label{pob} A parabolic degenerate orthogonal bundle with flags is a degenerate orthogonal bundle with flag together with a sequence of rational numbers $0 \leq  \alpha^1_x  < \ldots \alpha_x^i  \ldots < \alpha^{m_x}_x < 1$ associated to the subspaces $F^i_x$. They are increasing with the associated vector space becoming smaller.
\end{defi}
\begin{rem} \label{br2} In the context of \cite[U.Bhosle]{bhosledeg}, there is only one flag namely $\ker(q_x) \subset V_x$ at each parabolic  point with with weight $1/2$. 

\end{rem}

From now on, we will consider a {\it ramified} Galois cover $p: Y \ra X$ with Galois group $\Gamma$. We will work with a $\Gamma$ vector bundle $W$ on $Y$ together with an everywhere non-degenerate quadratic form $q\rq{}$ which is $\Gamma$-invariant.  For a ramification point $y \in \ram(p)$ let  $\Gamma_y$ denote the isotropy subgroup at $y$, which is well known to be cyclic. The action of $\Gamma_y$, provides a canonical decomposition of the geometric fiber $W_y$ of $W$
 $$W_y = \oplus_{ g \in \Gamma^*_y} W_{y,g}$$
 in terms of the character group $\Gamma^*_y$  of $\Gamma_y$, where $$W_{y,g} = \{ w \in W_y | \gamma w = g(\gamma) w \, \forall \gamma \in \Gamma_y \}$$ is the $g$-eigenspaces. Let $B_y$ denote the bilinear form at $y$. For  $g_1, g_2 \in \Gamma^*_y$,  if $g_1 \neq g_2^{-1}$ then $W_{y, g_1} \perp W_{y, g_2}$ under $B_y$, otherwise there is a perfect pairing $B_{y,g} : W_{y, g} \times W_{y, g^{-1}} \ra \CC$. The isotropy subgroup $\Gamma_y$ acts on the cotangent space $m_y/m_y^2$ at $y$ through a representation which defines a generator $h_y$ of $\Gamma^*_y$. For any $g \in \Gamma_y$ let $i_g$ denote the natural number defined by $h^{i_g}_y = g$. Let $W_{|\Gamma|}$ denote $p^*p_*^{\Gamma}W$.

\begin{prop} \label{orddeg}
 The quadratic form $q'$  on $W$ goes down to $V$ as a quadratic form $q$ of singularity $\leq 1$. 
\end{prop}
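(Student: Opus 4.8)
The plan is to work locally at a ramification point $y \in \ram(p)$ lying over $x \in R$, since away from the branch locus $p$ is étale and $q'$ descends to a nondegenerate form on $V = p^\Gamma_* W$ with no singularities at all. So the entire content is to analyze what happens in a $\Gamma_y$-invariant neighborhood of $y$, and to show that the descended form $q$ on the stalk $\VV_x$ has a cokernel $S = (V^* \otimes L)/q(V)$ which is a skyscraper supported at $x$ of length equal to $\dim W_{y,g}$ for the appropriate eigencharacter — in particular of depth/torsion one, so that $S \simeq \bigoplus_{x} S_x$ as required by the definition of singularity $\leq 1$.

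First I would recall the standard picture for the invariant direct image: choosing a local parameter $t$ at $x$ and $s$ at $y$ with $s^{|\Gamma_y|} = t$ (up to units), the $\Gamma_y$-module $\VV_x = (p_* W)_x^{\Gamma_y}$ decomposes according to the eigenspaces $W_{y,g}$, and the section of $W$ corresponding to the $g$-eigenspace contributes, after taking invariants, sections of $V$ that vanish to order $i_g$ (the integer with $h_y^{i_g} = g$, normalized to $0 \le i_g < |\Gamma_y|$). Concretely, there is a basis of $\VV_x$ adapted to a basis of $W_y = \bigoplus_g W_{y,g}$ in which the generators carry these vanishing orders. Then I would compute $\ol{q_x}$ in such a basis: using the $\Gamma$-invariance of $q'$ and the orthogonality relations $W_{y,g_1} \perp W_{y,g_2}$ unless $g_1 = g_2^{-1}$ together with the perfect pairings $B_{y,g}\colon W_{y,g} \times W_{y,g^{-1}} \to \CC$, the matrix of $q$ pairs the $g$-block with the $g^{-1}$-block with a unit entry times $s^{i_g + i_{g^{-1}}}$. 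Now $i_g + i_{g^{-1}}$ is either $0$ (when $g = g^{-1} = 1$, i.e. the trivial eigenspace $W_{y,1}$, which descends to the "good" part where $q$ stays nondegenerate) or exactly $|\Gamma_y|$ (for every nontrivial $g$, since $i_g + i_{g^{-1}} = |\Gamma_y|$ when $g \ne 1$). Hence in this basis $\ol{q_x}$ is block-anti-diagonal of the shape $\begin{bmatrix} J_1 & \\ & t\, J_2 \end{bmatrix}$ exactly as in Remark \ref{lfo}, where $J_1$ has size $\dim W_{y,1}$ and $J_2$ has size $\sum_{g \ne 1} \dim W_{y,g}$, and $t = s^{|\Gamma_y|}$ is the local parameter at $x$ (absorbing the line bundle $L$ is harmless since locally $L$ is trivial).

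From this normal form the conclusion is immediate: $\det(\ol{q_x})$ is a unit times $t^{\dim F^1_x}$ where $F^1_x = \bigoplus_{g \ne 1} W_{y,g}$, so $q\colon V \to V^* \otimes L$ is an isomorphism away from $R$ and at each $x \in R$ the cokernel $S$ has stalk $\cO_{X,x}^{\dim F^1_x}/(t)$, which is annihilated by $m_x$; therefore $S$ is a direct sum of skyscrapers $\bigoplus_{x \in \supp(S)} S_x$ and the natural map $S \to \bigoplus S_x$ is an isomorphism — this is precisely the condition that $q$ has singularity $\leq 1$. I expect the main obstacle to be bookkeeping rather than conceptual: one must choose the local bases on $Y$ and $X$ compatibly and carefully track the vanishing orders $i_g$ and the fact that, for $g$ of order $2$, the eigenspace $W_{y,g}$ pairs with itself — there $i_g + i_{g^{-1}} = 2 i_g = |\Gamma_y|$ still holds, so the $s^{|\Gamma_y|} = t$ factor comes out uniformly, but one should check the symmetry of the resulting form on $W_{y,g}$ so that $J_2$ is genuinely a symmetric (anti-diagonal) matrix and $q$ remains a quadratic, not merely bilinear, form after descent. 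Everything else is the routine local computation with invariants under a cyclic group, and the global statement follows by patching over the finitely many branch points.
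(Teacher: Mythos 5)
Your argument is correct and follows essentially the same route as the paper: both rest on the eigenspace decomposition $W_y=\oplus_g W_{y,g}$, the fact that $q'$ pairs $W_{y,g}$ perfectly with $W_{y,g^{-1}}$, and the identity $i_g+i_{g^{-1}}=|\Gamma_y^*|$ for $g\neq e$, so that each pairing block of the descended form degenerates to total order $|\Gamma_y^*|$ on $Y$, i.e.\ order one in the parameter $t=s^{|\Gamma_y^*|}$ on $X$. The paper phrases this via the Hecke-modification exact sequence for $W_{|\Gamma|}$ rather than your explicit adapted basis yielding the normal form $\bigl[\begin{smallmatrix} J_1 & \\ & tJ_2\end{smallmatrix}\bigr]$, but the content is the same.
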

 \begin{proof}
  We have $i_g + i_{g^{-1}}= |\Gamma^*_y|$ for $g \neq \{ e \}$. Now the bundle $W_{|\Gamma|}$ can also be described by the Hecke modification
 \begin{equation} \label{h-mod1}
 0 \ra W_{|\Gamma|} \ra W \ra \oplus_{y \in \ram(p)} \oplus_{g \in \Gamma^*_y \setminus \{ e \}} W_{y,g} \otimes_{\CC} \cO_y/m_y^{|\Gamma^*_y| - i_g} \ra 0 
 \end{equation}
 Since $|\Gamma^*_y|- i_g +  |\Gamma^*_y| - i_{g^{-1}} = |\Gamma^*_y|$, so the restriction of $q'$ to $W_{|\Gamma|}$ has singularities of order $|\Gamma^*_y|$ at $y$ and $q$ has thus singularities of order $1$ at $x$.
 \end{proof}

The following Proposition is readily checked.
 \begin{prop} \label{canisoms} For $y_1$ and $y_2$ in the same fiber of $p$, there exists the following canonical isomorphisms 
\begin{enumerate}
\item $\alpha: \Gamma^*_{y_2} \ra \Gamma^*_{y_1}$ mapping $h_{y_2}$ to $h_{y_1}$. It can be obtained by conjugation by any $\theta \in \Gamma$ satisfying $\theta(y_1)=y_2$. 
\item $\beta: \PP(W_{y_1}) \ra \PP(W_{y_2})$ which restricts to canonical isomorphisms $\beta_g: \PP(W_{y_1,_g}) \ra \PP(W_{y_2,{\alpha^*(g)}})$ for any $g \in \Gamma^*_{y_1}$.
\item between the bilinear forms $B_{y_1}$ and $B_{y_2}$ i.e the following diagram commutes
\begin{equation*}
\xymatrix{
\PP(W_{y_1}) \ar[r]^{q'_{y_1}} \ar[d]^\beta & \PP(W_{y_1}^*) \\
\PP(W_{y_2}) \ar[r]^{q'_{y_2}}  & \PP(W_{y_1}^*) \ar[u]^{\beta^*}
}
\end{equation*}
 In particular, the identification is independent of $\theta$ mapping $y_1$ to $y_2$.
\end{enumerate} 
\end{prop}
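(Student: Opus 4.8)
~\textbf{Proof proposal.}
The claim is that for two points $y_1,y_2$ in the same fibre of $p$, the data $(\Gamma^*_{y_i}, \text{eigenspace decomposition of } W_{y_i}, B_{y_i})$ are canonically identified, independently of the choice of $\theta\in\Gamma$ with $\theta(y_1)=y_2$. The plan is to produce all the claimed isomorphisms from such a $\theta$ and then check that changing $\theta$ does not change them.

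First I would fix any $\theta\in\Gamma$ with $\theta(y_1)=y_2$. Since $\Gamma$ acts on $Y$ with $\theta y_1 = y_2$, conjugation $\gamma\mapsto\theta\gamma\theta^{-1}$ carries the isotropy group $\Gamma_{y_1}$ isomorphically onto $\Gamma_{y_2}$; dualizing gives $\alpha=\alpha_\theta:\Gamma^*_{y_2}\to\Gamma^*_{y_1}$. To see $\alpha(h_{y_2})=h_{y_1}$, recall $h_{y_i}$ is defined by the action of $\Gamma_{y_i}$ on the cotangent line $m_{y_i}/m_{y_i}^2$; the differential of $\theta$ is a $\CC$-linear isomorphism $m_{y_1}/m_{y_1}^2\to m_{y_2}/m_{y_2}^2$ intertwining the $\Gamma_{y_1}$- and $\Gamma_{y_2}$-actions along the conjugation, so the distinguished characters correspond. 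Next, the lift of the $\Gamma$-action to $W$ gives a linear isomorphism $\theta_*:W_{y_1}\to W_{y_2}$ which is equivariant for conjugation by $\theta$; hence it sends the $g$-eigenspace $W_{y_1,g}$ onto the $(\alpha^{-1}(g))$-eigenspace $W_{y_2,\alpha^{-1}(g)}$. Projectivizing yields $\beta=\PP(\theta_*)$ and its restrictions $\beta_g$ as in (2). For (3), the quadratic form $q'$ is $\Gamma$-invariant, so $\theta_*$ is an isometry between $(W_{y_1},B_{y_1})$ and $(W_{y_2},B_{y_2})$; passing to projective space gives the commuting square, and the compatibility of $\beta_g$ with the perfect pairings $B_{y,g}$ on $W_{y,g}\times W_{y,g^{-1}}$ follows from the same equivariance.

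The substantive point — and the only place any argument is needed — is independence of $\theta$. If $\theta'$ is another element with $\theta'(y_1)=y_2$, then $\theta'=\theta\sigma$ with $\sigma\in\Gamma_{y_1}$. For $\alpha$: conjugation by $\sigma$ is an \emph{inner} automorphism of the abelian group $\Gamma_{y_1}$, hence the identity, so $\alpha_{\theta'}=\alpha_\theta$. For $\beta$ and $\beta_g$: we have $(\theta')_* = \theta_*\circ\sigma_*$ where $\sigma_*:W_{y_1}\to W_{y_1}$ is the action of $\sigma\in\Gamma_{y_1}$; but on each eigenspace $W_{y_1,g}$ the element $\sigma$ acts by the \emph{scalar} $g(\sigma)$, so $\sigma_*$ is scalar on each $W_{y_1,g}$ and therefore $\PP(\sigma_*)=\Id$ on $\PP(W_{y_1,g})$, hence $\PP((\theta')_*)=\PP(\theta_*)$ after restricting to each eigenspace. (On all of $\PP(W_{y_1})$, $\PP(\sigma_*)$ need not be the identity, which is why the statement is phrased eigenspace-by-eigenspace.) For (3): both $(\theta')_*$ and $\theta_*$ are isometries differing by $\sigma_*$, which is an isometry of $(W_{y_1},B_{y_1})$, so the induced maps on projective space and on the pairings agree. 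This is why the proposition is ``readily checked'': once the maps are built from $\theta$, every ambiguity is killed either by commutativity of $\Gamma_{y_1}$ or by the fact that $\Gamma_{y_1}$ acts by scalars on eigenspaces.

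The only mild subtlety to be careful about is keeping track of inverses — whether $\alpha$ sends $g$ to its pullback or its pullback-inverse — and making sure the eigenspace indexing in (2) and the pairing in (3) use matching conventions; this is bookkeeping rather than a genuine obstacle. I expect no real difficulty: the ``hard part'' is merely organizing the three compatibilities so that the two independence arguments (abelian-ness kills the character part, scalar-action kills the projective part) are visibly applied in the right places.
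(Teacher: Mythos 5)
Your proof is correct, and it supplies exactly the verification the paper omits: the paper states only that the proposition ``is readily checked'' and gives no argument, so there is nothing to diverge from. Your construction (build $\alpha$, $\beta$, and the isometry from a choice of $\theta$ via the linearization, then kill the ambiguity $\theta\mapsto\theta\sigma$, $\sigma\in\Gamma_{y_1}$, using abelianness of the cyclic isotropy group for the character part and the scalar action $g(\sigma)$ on each eigenspace for the projective part) is the intended one, and it is what the later proof of Theorem \ref{parahoricorth} implicitly relies on when descending the pairings $B_{y,g}$. One genuinely useful point you make that the paper glosses over: only the restrictions $\beta_g:\PP(W_{y_1,g})\to\PP(W_{y_2,\alpha^{-1}(g)})$ and the induced identification of bilinear forms are independent of $\theta$; the full map $\beta$ on $\PP(W_{y_1})$ is not, since $\sigma_*$ acts by different scalars on different eigenspaces. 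This sharpening (together with your flagged $\alpha$ versus $\alpha^{-1}$ bookkeeping in the eigenspace indexing) is consistent with how the proposition is actually used and does not affect any downstream argument.
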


Let us recall the Rees lemma in homological algebra. In this paper, we will often use it for the case $n=0$ to make an extension of a skyscraper sheaf by a vector bundle.

\begin{thm}[Rees] \label{rees} Let $R$ be a ring and $x \in R$ be an element which is neither a unit nor a zero divisor. Let $R^*= R/(x)$. For  an $R$-module $M$, suppose moreover that $x$ is regular on $M$. Then there is an isomorphism 
$$Ext^n_{R^*} (L^*, M/xM) \simeq Ext^{n+1}_R(L^*, M)$$
for every $R^*$-module $L^*$ and every $n \geq 0$.
\end{thm}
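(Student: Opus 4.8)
The plan is to reduce to the case $n = 0$ and then bootstrap by dimension shifting. The only arithmetic input needed is that $R^* = R/(x)$ carries the length-one free resolution $0 \to R \map{x} R \to R^* \to 0$ over $R$; this is where the hypothesis that $x$ is neither a unit nor a zero divisor is used.

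I would start from two elementary observations valid for an arbitrary $R^*$-module $A$. First, since $xA = 0$, multiplication by $x$ acts as the zero endomorphism on $\Ext^i_R(A, N)$ for every $R$-module $N$ and all $i \geq 0$. Second, $\Hom_R(A, M) = 0$, because an $R$-linear map $A \to M$ has image inside the $x$-torsion of $M$, which vanishes as $x$ is regular on $M$. Now apply $\Hom_R(L^*, -)$ to the sequence $0 \to M \map{x} M \to M/xM \to 0$ (exact precisely because $x$ is regular on $M$): every multiplication-by-$x$ map in the long exact sequence is zero and its leading term $\Hom_R(L^*, M)$ vanishes, so it collapses to a natural isomorphism $\Hom_R(L^*, M/xM) \simeq \Ext^1_R(L^*, M)$; and $\Hom_R(L^*, M/xM) = \Hom_{R^*}(L^*, M/xM)$ since $M/xM$ is an $R^*$-module. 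This settles $n = 0$.

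For the general case I would consider the two functors $S^\bullet := \Ext^\bullet_{R^*}(-, M/xM)$ and $T^\bullet := \Ext^{\bullet+1}_R(-, M)$ on the category of $R^*$-modules. Both are contravariant cohomological $\delta$-functors: for $T^\bullet$ one restricts the usual $\delta$-functor $\Ext^{\bullet+1}_R(-, M)$ along $R^*\text{-Mod} \hookrightarrow R\text{-Mod}$ (short exact sequences of $R^*$-modules remain short exact over $R$), the vanishing $\Hom_R(-, M) = 0$ on $R^*$-modules noted above being exactly what makes the associated long exact sequence start in degree zero. By the base case, $S^0 \simeq T^0$ as functors, so it suffices to show both $\delta$-functors are universal, i.e. coeffaceable in positive degrees. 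For $S^\bullet$ this is standard: $R^*\text{-Mod}$ has enough projectives and $\Ext^{\geq 1}_{R^*}(P, -) = 0$ for $P$ projective. For $T^\bullet$ it suffices that $\Ext^j_R(P, M) = 0$ for all $j \geq 2$ when $P$ is projective over $R^*$; writing such $P$ as a direct summand of a free $R^*$-module $(R^*)^{(I)}$ and using $\Ext^j_R\big((R^*)^{(I)}, M\big) = \prod_I \Ext^j_R(R^*, M)$, this reduces to $\Ext^{\geq 2}_R(R^*, M) = 0$, which is immediate from the length-one resolution above. Two universal $\delta$-functors agreeing in degree zero are isomorphic, which gives $\Ext^n_{R^*}(L^*, M/xM) \simeq \Ext^{n+1}_R(L^*, M)$ for all $n \geq 0$. (Equivalently, one can argue by hand: choose $0 \to K^* \to P^* \to L^* \to 0$ with $P^*$ projective over $R^*$, dimension-shift on the $R$-side via $\Ext^{\geq 2}_R(P^*, M) = 0$ and on the $R^*$-side via $\Ext^{\geq 1}_{R^*}(P^*, M/xM) = 0$, and induct on $n$, passing through degree one using cokernels and the naturality of the degree-zero isomorphism.)

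I do not expect a real obstacle: this is a classical homological lemma. The only delicate point is the low-degree bookkeeping — the step from $\Hom$ to $\Ext^1$ is not a generic degree shift and relies on the naturality of the base-case isomorphism — and the only substantive input is the vanishing $\Ext^{\geq 2}_R(R^*, M) = 0$, which is precisely where the non-zero-divisor hypothesis on $x$ is used; the assumption that $x$ is regular on $M$ enters only to make $0 \to M \map{x} M \to M/xM \to 0$ exact and to force $\Hom_R(-, M) = 0$ on $R^*$-modules.
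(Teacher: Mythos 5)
Your argument is correct and complete. Note that the paper itself does not prove this statement: Theorem \ref{rees} is quoted as a classical recalled lemma (it is the standard Rees change-of-rings isomorphism), so there is no in-paper proof to compare against. Your base case is exactly the standard one --- the long exact sequence of $\Hom_R(L^*,-)$ applied to $0 \to M \stackrel{x}{\to} M \to M/xM \to 0$ collapses because $x$ kills $L^*$ (hence acts as zero on every $\Ext^i_R(L^*,-)$) and because $\Hom_R(L^*,M)=0$ by $x$-regularity of $M$ --- and your bootstrap via universal $\delta$-functors is sound: the only nontrivial coeffaceability input, $\Ext^{\geq 2}_R(R^*,M)=0$, follows from the length-one free resolution $0 \to R \stackrel{x}{\to} R \to R^* \to 0$, which is where the non-zero-divisor hypothesis on $x$ enters. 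You also correctly flag the one genuinely delicate point, namely that the passage from $\Hom$ to $\Ext^1$ is a naturality statement about the connecting map rather than a generic degree shift, and you handle it by checking that $T^\bullet = \Ext^{\bullet+1}_R(-,M)$ is an exact $\delta$-functor starting in degree zero precisely because $\Hom_R(-,M)$ vanishes on $R^*$-modules. Nothing is missing.
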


\begin{prop} \label{extsim} Let $(V,q)$ be a quadratic bundle on $X$ with values in a line bundle $L$ such that $q$ is only generically an isomorphism. Then putting $S= V^*/q(V)$, there is a natural isomorphism of the skyscraper sheaves $S \simeq \Ext^1_X(S, \cO_X)$.
\end{prop}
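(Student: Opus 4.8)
The plan is to reduce the global statement to a local computation around each point of the support $S$ and then apply the Rees lemma (Theorem \ref{rees}) with $n=0$. First I would observe that $S = V^*\otimes L/q(V)$ is a skyscraper sheaf supported on the finite set $R$ where $q$ degenerates, so both $S$ and $\Ext^1_X(S,\cO_X)$ decompose as direct sums over the points $x\in R$ of their local contributions; hence it suffices to produce a canonical isomorphism of $\cO_{X,x}$-modules $S_x\simeq \Ext^1_{\cO_{X,x}}(S_x,\cO_{X,x})$ for each $x$, compatibly. (Here I am sloppily writing $S_x$ for the stalk/localization, which is a finite-length torsion module over the DVR $R:=\cO_{X,x}$.) Note that since $\Ext^1_X(-,\cO_X)$ on a smooth curve is the Grothendieck--Serre dual on torsion sheaves, such an isomorphism exists abstractly; the content of the proposition is that there is a \emph{natural} one coming from the quadratic structure.

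\smallskip

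The key step is the following. Twisting by $L$ changes nothing essential (locally $L$ is trivial), so I may assume $L=\cO_X$. Consider the short exact sequence of $\cO_X$-modules $0\to V \xrightarrow{q} V^*\to S\to 0$. Apply $\Hom_X(-,\cO_X)$: since $V,V^*$ are locally free and $S$ is torsion, $\Hom_X(S,\cO_X)=0$ and $\Ext^1_X(V^*,\cO_X)=0$, so the long exact sequence gives
\begin{equation*}
0\to \Hom_X(V^*,\cO_X)\xrightarrow{q^\vee} \Hom_X(V,\cO_X)\to \Ext^1_X(S,\cO_X)\to 0,
\end{equation*}
i.e. $\Ext^1_X(S,\cO_X)\simeq \coker\big(q^\vee\colon V\to V^*\big)$ after identifying $V^{**}=V$ and $(V^*)^*=V$, where $q^\vee$ is the transpose of $q$. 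But $q$ is \emph{symmetric} (it is a quadratic form), so $q^\vee=q$ under these canonical identifications, and therefore $\coker(q^\vee)=\coker(q)=V^*/q(V)=S$. This yields the desired natural isomorphism $S\simeq \Ext^1_X(S,\cO_X)$, and the naturality (independence of choices, compatibility with the decomposition over $R$) is immediate because every map in sight is canonical once $q$ is symmetric. Alternatively, if one prefers to stay strictly local and invoke Rees: on the DVR $R$ with uniformizer $t$, for a finite-length module one reduces along a filtration to the residue field, and Rees with $n=0$ gives $\Hom_{R/t}(\CC,M/tM)\simeq \Ext^1_R(\CC,M)$; stitching these identifies $S_x$ with $\Ext^1_{R}(S_x,R)$ using that $q$ has singularities of order $\le 1$, but the cokernel computation above is cleaner and I would present that.

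\smallskip

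The main obstacle I anticipate is purely bookkeeping rather than conceptual: one must be careful that the ``transpose'' of $q\colon V\to V^*\otimes L$ is again a map $V\to V^*\otimes L$ and equals $q$, which requires spelling out the canonical isomorphism $V\xrightarrow{\sim}V^{**}$ and the twist by $L$ correctly; and one must check that the resulting isomorphism $S\simeq\Ext^1_X(S,\cO_X)$ really is intrinsic (does not depend on trivializing $L$ locally). Both are routine. A secondary point to watch is that the statement as phrased has $S=V^*/q(V)$ (no explicit $L$), so I would either note $L=\cO_X$ is the case of interest or carry the twist $\Ext^1_X(S,\cO_X)\simeq S\otimes L$ which is what the symmetric-cokernel argument actually produces; since $S$ is already an $L/(\text{something})$-type sheaf this twist is harmless, but I would state it precisely to avoid confusion in the later application inside the proof of Theorem \ref{parahoricorth}.
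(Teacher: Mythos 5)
Your argument is exactly the paper's proof: apply $\Hom_X(-,\cO_X)$ to the short exact sequence $0 \to V \xrightarrow{q} V^* \to S \to 0$ and use the symmetry $q^\vee = q$ to identify $\Ext^1_X(S,\cO_X)$ with $\coker(q) = S$. The extra remarks on localizing, the $L$-twist, and the Rees-lemma alternative are fine but not needed; the core step matches the paper's one-line proof.
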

\begin{proof} This follows immediately by applying the functor $\Hom_X(-, \cO_X)$ to the short exact sequence 
$0 \ra V \stackrel{q}{\ra} V^* \ra S \ra 0 $
and remarking that $q^* =q$.
\end{proof}


For the sake of completeness the following theorem has been proved in general. For a reader familiar with parabolic vector bundles, the case of two-sheeted covers already reveals the new features ( $q_{i,x}$, perfect pairings of $G^i_x$ etc) of $\Gamma$-invariant quadratic form $q\rq{}$.

 \begin{thm} \label{parahoricorth}  Let $W$ be a $\Gamma$-$\GL_n$ bundle on $Y$ such that the quotient space $W(\GL_n/O_n) \ra Y$ admits a $\Gamma$-invariant section $q'$. Then to such a bundle we can canonically associate  a degenerate orthogonal bundle $(V,q, F^\bullet_\bullet, \cO_X)$ with flags at precisely the branch points of $p: Y \ra X$ and parabolic weights symmetric about $1/2$. Conversely, let $(V,q, F^\bullet_\bullet, \cO_X)$ be a degenerate orthogonal bundle with Flags on a smooth projective curve $X$ with weights symmetric about $1/2$, then there
 exists a Galois cover $p: Y \ra X$ with Galois group $\Gamma$ along with a vector bundle $W$ on $Y$ and a $\Gamma$-invariant section $q': Y \ra W(\GL_n/O_n)$ such that $(W,q')$ is mapped to  $(V,q)$ by the first part of the theorem. 
\end{thm}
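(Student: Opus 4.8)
The plan is to prove both directions of the correspondence by working locally at each ramification point and then globalizing via Hecke modifications.

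\medskip

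\textbf{Direction 1 (from $\Gamma$-bundle to degenerate orthogonal bundle).} First I would set $V = p_*^\Gamma W$, the invariant direct image, which is a vector bundle on $X$ since $\Gamma$ acts. By the classical Mehta--Seshadri machinery, $V$ carries a parabolic structure at the branch points $R = p(\ram(p))$; I would recall that for a branch point $x = p(y)$, the flag on $V_x$ is read off from the $\Gamma_y$-eigenspace decomposition $W_y = \oplus_{g \in \Gamma^*_y} W_{y,g}$, with the weight attached to the piece built from $W_{y,g}$ being $i_g/|\Gamma^*_y|$. Proposition \ref{orddeg} already gives that $q'$ descends to a quadratic form $q$ on $V$ with singularity of order $\leq 1$, and the Hecke modification \eqref{h-mod1} is the key computational input: the relation $i_g + i_{g^{-1}} = |\Gamma^*_y|$, together with the fact that $B_y$ pairs $W_{y,g}$ perfectly with $W_{y,g^{-1}}$ and kills the other cross terms, forces the weights to be symmetric about $1/2$ and identifies $F^1_x = \ker(q_x)$ with the sum of the eigenpieces having $i_g > |\Gamma^*_y|/2$. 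I would then verify conditions (1)--(4) of Definition \ref{qpob}: (1) is the identification just described; for (2) and (3) I would use Remark \ref{lfo}, writing $\ol{q'_y}$ in an eigenbasis and pushing down to see the block form $\mathrm{diag}(J_1, tJ_2)$, so that the restriction of $q$ to $F^1(V)$ factors through $\cO_X(-R)$ and induces a nondegenerate form $q_{1,x}$ on $F^1_x$; and (4) follows because the perfect pairings $B_{y,g}\colon W_{y,g}\times W_{y,g^{-1}}\to\CC$ descend to the perfect pairings $q_{i,x}$ on the successive quotients $G^i_x$, exactly as recorded in Remark \ref{lfo1}, which shows the deeper flag steps are isotropic for $q_{1,x}$. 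Canonicity in the fiber (independence of the chosen point $y$ over $x$) is handled by Proposition \ref{canisoms}.

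\medskip

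\textbf{Direction 2 (reconstruction).} Here I start from $(V, q, F^\bullet_\bullet, \cO_X)$ with weights symmetric about $1/2$. The weights at each $x \in R$ have common denominator $N_x$; taking $N = \mathrm{lcm}(N_x)$, the uniformization/Selberg construction recalled in Section \ref{ai} produces a Galois cover $p\colon Y \to X$ with Galois group $\Gamma$ ramified over $R$ with the prescribed ramification indices. The standard parabolic-bundle correspondence of Mehta--Seshadri produces a $\Gamma$-vector bundle $W_0$ on $Y$ with $p_*^\Gamma W_0 = V$, with the eigenspace decomposition of $(W_0)_y$ matching the flag and weights on $V_x$; concretely one builds $W_0$ as a sheaf of $\Gamma$-invariant sections with prescribed pole/vanishing orders along $\ram(p)$ dictated by the weights, i.e. the reverse of the Hecke modification \eqref{h-mod1}. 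It then remains to equip $W_0$ (or a suitable modification) with an everywhere nondegenerate $\Gamma$-invariant quadratic form $q'$ whose descent is $q$. For this I would use the flag data: the perfect pairings $q_{i,x}$ on the $G^i_x$ and the nondegenerate $q_{0,x}$, $q_{(1+m_x)/2,x}$ assemble (by the "conversely" clause of Remark \ref{lfo1}) into a local nondegenerate $\Gamma_y$-invariant form on $(W_0)_y$; combined with the generic form $q$ on $V$ pulled back to $Y\setminus\ram(p)$, and using the block description of Remark \ref{lfo} to check the two glue, one obtains $q'$ on $W$ with the required properties. Finally I would check that running Direction 1 on $(W, q')$ returns $(V, q, F^\bullet_\bullet)$ — this is essentially a matter of tracing through that the eigenspace/Hecke data one put in is exactly what one reads back out.

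\medskip

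\textbf{Main obstacle.} I expect the delicate point to be the bookkeeping in Direction 2: ensuring the \emph{local} nondegenerate $\Gamma_y$-invariant form coming from the flag data and the \emph{global} generic form $q$ on $V$ are mutually compatible so that they patch to a single everywhere-nondegenerate $q'$ on a single vector bundle $W$ on $Y$ — rather than on bundles differing by an elementary modification. Remark \ref{lfo} (the explicit $\mathrm{diag}(J_1, tJ_2)$ normal form) is the tool that makes this work, but verifying that the pullback $p^*q$ near a ramification point, after the modification by $L_1$-type twists / the reverse Hecke modification, acquires precisely the order needed to become an everywhere-nondegenerate $\Gamma_y$-equivariant form — neither too singular nor forcing extra vanishing — is where the care lies, and is also where the restriction to $\cO_X$-values (via Proposition \ref{Ltotriv}) and the symmetry of weights about $1/2$ are genuinely used. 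A secondary subtlety is the non-canonical choices (the cover $Y$, the bundle $W$) in Direction 2, which the statement acknowledges by only asking for \emph{existence}; I would not attempt to rigidify these.
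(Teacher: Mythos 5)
Your Direction~1 follows the paper's proof essentially verbatim: reduction to a single ramification point via Proposition~\ref{canisoms}, the eigenspace decomposition and the relation $i_g+i_{g^{-1}}=|\Gamma_y^*|$, Proposition~\ref{orddeg} for the order of the singularity, and Remarks~\ref{lfo} and~\ref{lfo1} for conditions (1)--(4); the only detail you elide is the paper's specific mechanism for producing the flag, namely taking the image of $S\simeq\Tor_1(S,\cO_y/m_y^{|\Gamma_y^*|})$ under multiplication by $t_y^{i-1}$ followed by projection onto $W_{|\Gamma|,y}$, which is what makes the filtration and the identification $W_{y,h^i}=F^i_y/F^{i+1}_y$ canonical rather than basis-dependent. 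In Direction~2 you build the same object as the paper --- your ``reverse Hecke modification'' is exactly the paper's pull-out of $0\to p^*V\to p^*V^*\to p^*S\to 0$ by the submodule $T=\sum_i F^i_y\otimes m_y^{r_x-i}/m_y^{r_x}$ --- but you diverge on the decisive step, the extension of $p^*q$ to an everywhere nondegenerate form on $W$. The paper does this homologically: two factorization claims, proved via the universal-extension lemma and Rees's Theorem~\ref{rees}, culminating in the identification $\Ext^1(Q,\cO_Y)\simeq T$, which is precisely where the perfect pairings $G^i_y\times G^{r_x-i}_y\to\CC$ and the symmetry of the weights about $1/2$ are consumed, and which establishes that $W$ is self-dual under the extended form. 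You propose instead a local normal-form computation via Remark~\ref{lfo}; this is a legitimate alternative (the twist by $t_y^{-(r_x-i)}$ on $G^i$ and $t_y^{-i}$ on $G^{r_x-i}$ cancels the $t_y^{r_x}$ in the block $tJ_2$, exactly as in the proof of Proposition~\ref{parah2parab}(1)), and it is more concrete, but it requires choosing $\Gamma_y$-equivariant adapted bases refining the two-block form of Remark~\ref{lfo} to the full flag, and you have only named this computation as your ``main obstacle'' rather than carried it out. A second imprecision: gluing a form on the fiber $(W_0)_y$ with a form on $Y\setminus\ram(p)$ is not a well-posed patching problem --- what must be shown is that the single rational form $p^*q$ on $W$ is regular and unimodular at $\ram(p)$ --- though your closing paragraph shows you understand this is the real content. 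So: same skeleton, same bundle, a genuinely different (coordinate-based versus homological) verification of the key nondegeneracy claim, with that verification sketched but not executed.
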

\begin{proof}

 By Proposition \ref{canisoms} part (1) and (2), it suffices to treat the case of one ramification point $y \in Y$. Tensoring the short exact sequence (\ref{h-mod1}) with $\cO_y/m_y^{|\Gamma_y|}$, for $$S=\oplus_{g \in \Gamma^*_y \setminus \{ \Id \} } W_{y,g} \otimes_{\CC} \cO_y/m_y^{|\Gamma^*_y| - i_g} $$ we get a $\cO_y/m_y^{|\Gamma^*_y|}$-submodule
\begin{equation} \label{inc}
S \simeq \Tor_1(S, \cO_y/m_y^{|\Gamma^*_y|}) \hra  \WW_{|\Gamma|,y}/m_y^{|\Gamma^*_y|}.
\end{equation}   Let $t$ denote the local parameter of $m_y$. For $1 \leq i \leq |\Gamma_y|$, the image of $S$  under the composition 
of the endomorphism $\WW_{|\Gamma|,y}/m_y^{|\Gamma|}  \stackrel{mult(t_y^{i-1})}{\lra}  \WW_{|\Gamma|,y}/m_y^{|\Gamma|}$ followed by the projection $$ \WW_{|\Gamma|,y}/m_y^{|\Gamma|} \ra \WW_{|\Gamma|,y}m_y^{|\Gamma|-1}/m_y^{|\Gamma|} \simeq W_{|\Gamma|, y}$$
defines subspaces $F_y^i \subset W_{|\Gamma|, y}$. This sequence of subspaces are naturally filtered $F_y^{|\Gamma_y| -1} \subset \cdots F_y^1 \subset W_{|\Gamma|,y}$ owing to the fact that $S$ is an $\cO_y/m_y^{|\Gamma_y|}$-submodule of $ \WW_{|\Gamma|,y}/m_y^{|\Gamma|}$. 
Some inclusions may be equalities, so we extract a reduced filtration keeping only distinct subspaces by associating the rational number $\alpha^i_y = i/ |\Gamma^*_y|$ to $F^i_y$ if $F^i_y \neq F^{i-1}_y$. This defines a weighted filtration of $V_x$ for $x = p(y)$, which we denote by $F^i_x$. By (\ref{inc}), we also obtain that $W_{y,h^i} = F_y^i/F_y^{i+1}$. The perfect pairing between $W_{y,g}$ and $W_{y,g^{-1}}$ goes down to $X$ by Proposition \ref{canisoms} part (3) to give a perfect pairing between $G^i_x$ and $G^{|\Gamma_y|-i}_x$ where we recall $G^i_x = F^i_x/F^{i+1}_x$. When $g=g^{-1}$ i.e for $i_g=0$ and $i_g = (1 +|\Gamma_y|)/2$ we get a non-degenerate quadratic form on $G^0= V_x /F^1$ and $G^{(1+|\Gamma_y|)/2}$. On the other hand, tensoring (\ref{h-mod1}) with $\CC_y$ we see that $$F^1_y = \Tor_1(\oplus_{g \in \Gamma_y^* \setminus \{ \Id \}} W_{y, g}, \CC_y) \simeq \oplus_{g \in \Gamma_y^* \setminus \{ \Id \}} W_{y, g}$$ becomes a subspace of $W_{|\Gamma|,y} \simeq V_x$ and is isomorphic to $F^1_x$.
 Also the subspaces $\oplus_{g \in \Gamma_y^* \setminus \{ \Id \}} W_{y, g}$ and $W_{y,\Id}$ are perpendicular to each other.  So the restriction of the quadratic form to $F^1_y$ descends to $F^1_x$ as $q_{1,x}$.  Now the compatibility conditions (\ref{3}) and (\ref{4}) of Definition \ref{qpob} follow by Remark \ref{lfo} and Remark \ref{lfo1}. The restriction of the quadratic from $q'$ to $W_{|\Gamma|}$ becomes degenerate at the fiber at $y$, but induces a non-degenerate quadratic form on the quotient vector space $W_{|\Gamma|,y}/F^1_y$. This descends to the condition (\ref{1}) of Definition \ref{qpob}. 

 Condition (\ref{2}) that the order of degeneracy of $q$ on $F^1(V)$ is only one follows from Prop \ref{orddeg}.

This completes the proof of one direction in Theorem \ref{parahoricorth}.

\begin{lem} Let $r $ be an integer, $V$ a vector bundle and $R \subset X$ a finite set of points. Let $r R$ denote the divisor $\sum_{x \in R} r x$.  There exists a canonical extension 
\begin{equation} \label{ues}
0 \ra V \ra V(r R) \ra   \oplus_{x \in R} \VV_x/m_x^r \ra 0 
\end{equation}
which is universal in the sense that any extension of a skyscraper sheaf $S$ of depth less than $r$ with support in $R$ can be obtained by (\ref{ues}) by pull-back  by a homomorphism $S \ra  \oplus_{x \in R} \VV_x/m_x^r$.
\end{lem}
\begin{proof} We denote $V^*( -r R)$ as the $\Ker(V^* \ra  \oplus_{x \in R} \VV_x/m_x^r)$. Then dualizing $0 \ra V^*(-r R) \ra V^* \ra  \oplus_{x \in R} \VV^*_x/m_x^r \ra 0$ we get $$0 \ra V \ra V( r R) \ra \Ext^1(\oplus_{x \in R} \VV^*_x/m_x^r, \cO_X) \ra 0.$$ By Rees's Theorem \ref{rees} we have $\Ext^1_X(\oplus_{x \in R} \VV^*_x/m_x^r, \cO_X) =$ $$ \oplus_{x \in R}\Hom_{\cO_x/m_x^r}(\VV^*_x/m_x^r, \cO_x/m_x^r) = \oplus_{x \in R} \VV_x/m_x^r.$$ So the sequence becomes $0 \ra V \ra V(rR ) \ra \oplus_{x \in R} \VV_x/m_x^r \ra 0$. On the other hand by Rees's Theorem again, we have $$\Ext^1_X(S, V) = \Hom_{\oplus_{x \in R} \cO_x/m_x^r}(S, \oplus_{x \in R} \VV_x/m_x^r).$$ More explicitly, this corresponds to taking the pull-out by $\phi: S \ra \oplus_{x \in R} \VV_x/m_x^r$ of (\ref{ues}) to get an extension and from an extension $0 \ra V \ra W \ra S \ra 0$, by taking tensor product with  $\oplus_{x \in R} \cO_x/m_x^r$, we get an injective homomorphism $S = \Tor_1(S, \oplus_{x \in R} \cO_x/m_x^r) \ra  \oplus_{x \
in R} \VV_x/m_x^r$.
\end{proof}

 Let $p: Y \ra X$ realise a Galois cover such that for every point $x \in R$ the ramification index $r_x= |\Gamma_y^*|$ (for $p(y)=x$) is a multiple of the least common divisor $l_x$ of the denominators of the weights $\alpha^i_x$ of the Flag at $x$. The existence of such a cover is classically well known.
 
 Now we wish to construct a $\Gamma$-vector bundle $W$ on $Y$. Let $S$ denote $V^*/q(V)$. Taking the pull-back of $0 \ra V \stackrel{q}{\ra} V^* \ra S \ra 0$ to $p: Y \ra X$, we get 
\begin{equation} \label{sespull}
0 \ra p^*V \stackrel{p^*q}{\lra} p^*V^* \ra p^*S \ra 0 .
\end{equation}
Define a sequence of flags $F^j_y \subset p^*V_y \simeq V_x$ for $1 \leq j \leq r_x$ as $F^j_y = F^i_x$ whenever $\alpha^{i-1}_x r_x < j \leq \alpha^i_x r_x$. Note in particular that $F^{r_x}_y = \{ 0 \}$. Thus 
 for all $y \in p^{-1}(R)$ we continue to have a perfect pairing 
\begin{equation} \label{pp}
G^i_y \times G^{r_x-i}_y \ra \CC .\end{equation} Then define a $\lrn$-submodule $T$ of $p^*\VV_y/m_y^{|\Gamma_y^*|}$ as the sub-module generated by $F^i_y \otimes_{\CC} m_y^{r_x-i}/m_y^{|\Gamma_y^*|}$ for $1 \leq i \leq r_x$. Owing to the inclusions $F^{r_x}_y \subset \cdots F^1_y \subset p^*V_y$, $T$ is simply $$\sum_{1 \leq i \leq r_x} F^i_y \otimes_{\CC} m_y^{r_x-i}/m_y^{r_x}. $$ It can also be expressed as a vector space as follows 
\begin{equation} \label{T}
T= \oplus_{1 \leq i \leq r_x} F^i_y \otimes_{\CC} m_y^{r_x-i}/m_y^{r_x-i+1}.
\end{equation}
We take pull-out of (\ref{sespull}) by $T \ra p^*S$ to get 
\begin{equation} \label{sespgb}
0 \ra p^*V \ra W \ra T \ra 0.
\end{equation}
This defines the vector bundle $W$ on $Y$. Now we wish to extend the quadratic form $p^* q$ on $p^*V$ to $q\rq{}$ on $W$.
The sequence (\ref{sespgb}) on dualizing gives
\begin{equation} \label{dsespgb}
0 \ra W^* \ra p^*V^* \ra \Ext^1(T, \cO_Y) \ra 0
\end{equation}
\begin{claim} The composite of $p^*V \stackrel{p^*q}{\lra} p^*V^* \ra \Ext^1(T, \cO_Y)$ is zero. 
\end{claim} This would show that $p^*q$ factors through $W \ra p^*V^*$.

\begin{proof} Firstly the sequence (\ref{sespull}) belongs to $\Ext^1(p^*S, p^*V)$ and arises as the image of $\Id \in \Hom(p^*S, p^*S)$. By the commuting squares
\begin{equation*}
\xymatrix{
 \ar[r] & \Hom(p^*S, p^*S) \ar[r] \ar[d] & \Ext^1(p^*S, p^*V) \ar[r] \ar[d] & \Ext^1(p^*S, p^*V^*) \ar[d] \\
 \ar[r] & \Hom(T, p^*S) \ar[r]  & \Ext^1(T, p^*V) \ar[r] & \Ext^1(T, p^*V^*)  \\
}
\end{equation*}
the push-out of (\ref{sespgb}) by $p^*V \ra p^*V^*$ is the zero extension
\begin{equation} \label{text1}
\xymatrix{
0 \ar[r] & p^*V \ar[r] \ar[d]^{p^*q} & W \ar[r] \ar@{.>}[d] & T \ar[r] \ar@{.>}[d] & 0 \\
0 \ar[r] & p^*V^* \ar[r]                          & p^*V^* \oplus T \ar[r] & T \ar[r] & 0.
}
\end{equation}
Now viewing $v \in p^*V$ as a form on $p^*V^*$, we see that the composite of $v \circ p^*q = p^*(q)(v)$. So since the bottom row of (\ref{text1}) is split, so the push-outs by $p^*V^* \stackrel{v}{\ra} \cO_X$ are split. Thus the push-out of (\ref{sespgb}) by $p^*V \stackrel{p^*q(v)}{\ra} \cO_Y$ is split. This shows that we have a factorization (first $q^1$ and then $p^*S \ra \Ext^1(T, \cO_Y)$)
\begin{equation} \label{factdiag}
\xymatrix{
    &   & p^*V \ar[d]^{p^*q} \ar@{.>}[ld]_{q^1} & \\
0 \ar[r] & W^* \ar[r] & p^*V^* \ar[r] \ar[d] & \Ext^1(T, \cO_Y)  \ar[r] & 0 \\
  &  & p^*S \ar@{.>}[ur] &
}
\end{equation}
\end{proof}

\begin{claim} The morphism $q_1^*: W \ra p^*V^*$ factors through $ W^* \ra p^*V^*$.
\end{claim}
\begin{proof}
Let $Q$ denote the $\lrn$-module which is quotient of $T \ra p^*S$. It can be expressed as
$$Q= \oplus_{1 \leq i \leq r_x} F^1_y/F^i_y \otimes_{\CC} m_y^{r_x-i}/m_y^{r_x-i+1},$$ 
and we have
$0 \ra \Ext^1_Y(Q, \cO_Y) \ra \Ext^1_Y( p^*S, \cO_Y) \ra \Ext^1_Y(T, \cO_Y) \ra 0 $.
So by Proposition \ref{extsim} and diagram (\ref{factdiag}), we may replace $p^*S$ by $\Ext^1(p^*S, \cO_Y)$ in \ref{factdiag} to obtain 
$$0 \ra p^*V \stackrel{q_1}{\ra} W^* \ra \Ext^1(Q, \cO_Y) \ra 0.$$
Now rearranging terms we have 
\begin{equation} \label{reaterms}
Q= \oplus_{ x\in \supp(S)} \oplus_{r_x \geq i \geq 1} G^i_y \otimes_\CC  m_y^i/m_y^{r_x}.
\end{equation}
By Rees's Theorem \ref{rees}, we have $$\Ext^1(Q, \cO_Y) = \oplus_{x \in R} \oplus_{ 1 \leq i \leq r_x} (G^i_y)^* \otimes_\CC m_y^i/m_y^{r_x}.$$
For $i \geq 1$ we have perfect pairings between $G^i_y \times G^{r_x -i}_y \ra \CC$ by \ref{pp}, so we obtain a canonical isomorphism of $\lrn$-modules
$$(G^i_y)^* \otimes_\CC m_y^i/m_y^{r_x} \simeq G^{r_x-i}_y \otimes m_y^{r_x - (r_x-i)} \cO_y/m_y^{r_x}, $$
so rearranging terms again by (\ref{reaterms}) we obtain $\Ext^1(Q, \cO_Y) = T$ (cf \ref{T}).
Thus we have $0 \ra p^*V \stackrel{q_1}{\ra} W^* \ra T$. This sequence on dualizing gives $0 \ra W \stackrel{q_1^*}{\ra} p^*V^* \ra \Ext^1(T, \cO_Y) \ra 0$. 

This shows that we have a factorization
 \begin{equation} \label{lastdiag}
\xymatrix{
      & & W \ar[d]^{q_1^*} \ar@{.>}[ld]_{q'} & \\
0 \ar[r] & W^* \ar[r] & p^*V^* \ar[r] & \Ext^1(T, \cO_Y) \ar[r] & 0.
}
\end{equation}
\end{proof}
Since the quotient of both $q_1^*$ and the natural morphism $W^* \ra p^*V^*$ is $\Ext^1(T, \cO_Y)$, so by (\ref{lastdiag}), it follows that $q'$ is an isomorphism. The last assertion that by the construction in Theorem \ref{parahoricorth} sends $(W,q')$ to $(V,q)$ is now a formal consequence.
\end{proof}

\section{Degenerate Special Orthogonal bundles with flags}

By a $\Gamma$-special orthogonal structure on a $\Gamma$-$O_n$-bundle $E$ we mean the following:  denote $\{ m_g \}_{g \in \Gamma}$ the linearization on $E$, we demand that there exists moreover a section $s$ that makes the following diagram commute: 

\begin{equation*} 
\xymatrix{ E \ar[rr]^{m_g} \ar[dd] \ar[rd] && E \ar[dd] \ar[rd] & \\ & E(O_n/\SO_n)  \ar[rr]^{\ol{m_g}~~~~~~~~~~~~~~~~~~~~~~~~~} && E(O_n/\SO_n)   \\  Y \ar@{.>}[ur]^{s_Y} \ar[rr]^g && Y \ar@{.>}[ur]^{s_Y} & } 
\end{equation*}

In particular the bundle $E(V)$ needn't have trivial determinant, it can be a line bundle of order two. When it moreover has trivial determinant, we shall call it a $\Gamma$-$\SO_n$-bundle. 


\begin{thm} \label{pison} Let $(W,q', s_Y)$ be a $\Gamma$-$\SO_n$ bundle on $Y$. Then to $W$ we can canonically associate a degenerate  orthogonal bundle with flags $(V,q,F^\bullet_\bullet, \cO_X)$ on $X$ such that $\dim F^1_x$ is even for every $x \in R$ and the quotient space $V(O_q/\SO_q) \ra X$ admits a global section $s_X$. Conversely given such a degenerate  orthogonal bundle with flags $(V,q, F^\bullet_\bullet, \cO_X)$ and a section $s_X$, we can construct a Galois cover $p: Y \ra X$ and a $\Gamma$-$\SO_n$ bundle $W$ on $Y$ which is mapped to $(V,q,F^\bullet_\bullet, \cO_X, s_X)$ by the first part of the theorem.
\end{thm}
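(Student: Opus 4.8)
The plan is to upgrade the Galois correspondence of Theorem \ref{parahoricorth} (for $O_n$) to the $\SO_n$-setting by carefully tracking the extra determinant/orientation data on both sides. For the forward direction, I would start from the $\Gamma$-$\SO_n$-bundle $(W,q',s_Y)$ on $Y$, apply the $O_n$-part of Theorem \ref{parahoricorth} to obtain the degenerate orthogonal bundle with flags $(V,q,F^\bullet_\bullet,\cO_X)$ on $X$, and then verify the two additional conclusions. The parity statement $\dim F^1_x$ even comes from Remark \ref{lfo1}: recall $F^1_y=\oplus_{g\in\Gamma^*_y\setminus\{\Id\}}W_{y,g}$, which breaks into perfectly paired pieces $W_{y,g}\times W_{y,g^{-1}}$ together with the self-paired piece $W_{y,g_0}$ for the order-two character $g_0$ (when $|\Gamma^*_y|$ is even). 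The $\SO_n$-condition forces $\det W_y=1$, hence by the orthogonal decomposition $W_y=W_{y,\Id}\oplus F^1_y$ and the parities in Remark \ref{lfo1}, $\dim F^1_x$ is even (the key point being that the $\SO$ reduction rigidifies the local model so that the ``odd middle piece'' $G^{(1+m_x)/2}_x$ cannot carry a $1$-dimensional orthogonal summand independently — one checks the local type forces $\dim G^0_x$ and $\dim G^{(1+m_x)/2}_x$ to have the parity dictated by $\det=1$). The global section $s_X$ of $V(O_q/\SO_q)$ is obtained by descending $s_Y$: by Proposition \ref{ugp}, $O_n(\cO_y)^{\Gamma_y}\simeq\Aut_x(V,q,F^\bullet_\bullet,\cO_X)$, and this isomorphism carries the $\SO_n$-reduction $s_Y$, which is $\Gamma_y$-invariant by definition of a $\Gamma$-$\SO_n$-bundle, to a reduction of structure group of $\Aut_x(V,q,F^\bullet_\bullet)$ to $\SO_q$; away from $R$ the bundle is an honest $\SO_n$-bundle so $s_X$ extends over $X\setminus R$, and the two descriptions glue.

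For the converse, I would mimic the construction in the second half of the proof of Theorem \ref{parahoricorth}: choose a Galois cover $p:Y\to X$ with ramification indices adapted to the denominators of the weights, build $W$ on $Y$ via the Hecke-modification pull-out $0\to p^*V\to W\to T\to 0$ of \eqref{sespgb}, and equip it with the $\Gamma$-invariant form $q'$ exactly as before. The new input is the section $s_X$: since $\dim F^1_x$ is even, the local model of $W$ at each $y$ carries an orthogonal form of even rank on every graded piece relevant to orientation, so $s_X$ can be pulled back and propagated through the Rees-lemma identifications to a reduction $s_Y$ of $W(\GL_n/O_n)$'s $O_n$-structure to $\SO_n$; on $Y\setminus p^{-1}(R)$ it is the pullback of $s_X|_{X\setminus R}$, and at the ramification points the evenness guarantees the reduction is compatible with the $\Gamma_y$-linearization, i.e.\ the diagram in the definition of a $\Gamma$-$\SO_n$-structure commutes. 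That $(W,q')$ is sent back to $(V,q)$ is formal from Theorem \ref{parahoricorth}, and one checks $s_Y\mapsto s_X$ by tracing through Proposition \ref{ugp}.

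The main obstacle I expect is the bookkeeping at the order-two character $g_0$ when $|\Gamma^*_y|$ is even: here $W_{y,g_0}$ is self-paired, contributing the ``odd middle graded piece'' $G^{(1+m_x)/2}_x$, and one must show precisely why the $\SO_n$-hypothesis forbids $\dim W_{y,g_0}$ from being odd (equivalently, why $\dim F^1_x$ cannot be odd) — this is a local computation with the determinant of the $\Gamma_y$-action on $W_y$ combined with the parity relations of Remark \ref{lfo1}. The second delicate point is checking that the reduction $s_X$ actually \emph{descends/ascends} through the Rees-lemma identifications in a $\Gamma_y$-equivariant way: the identification $\Ext^1(Q,\cO_Y)=T$ used in the proof of Theorem \ref{parahoricorth} relied on the perfect pairings $G^i_y\times G^{r_x-i}_y\to\CC$, and one must verify these pairings together with the orientation $s$ are respected, so that $\SO_q$ (and not merely $O_q$) is the structure group throughout. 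Once these parity and equivariance checks are in place, the rest is a routine transcription of the $O_n$-argument, so I would present only the modifications and refer to the proof of Theorem \ref{parahoricorth} for the common parts.
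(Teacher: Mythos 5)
Your proposal is correct and follows essentially the same route as the paper: everything beyond Theorem \ref{parahoricorth} reduces to the parity of $\dim F^1_x$ and the section, the parity follows because the local isotropy representation $\rho_y:\Gamma_y\to\SO_n$ forces $\oplus_{g\neq e}W_{y,g}$ to be even-dimensional (the paper phrases this via the maximal torus of $\SO_n$, you via $\det\rho_y(\gamma)=1$ together with the pairings $W_{y,g}\leftrightarrow W_{y,g^{-1}}$ --- the same observation), and $s_X$ and $s_Y$ correspond by descent and pullback. The computation you defer as the ``main obstacle,'' namely $\det\rho_y(\gamma)=(-1)^{\dim W_{y,g_0}}$, is immediate from the perfect pairings, so it is not in fact an obstacle.
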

\begin{proof}
The only thing that needs to be checked is that $\dim F^1_x$ is {\it even} for every $x \in R$. One knows that $\Gamma_y$ acts on the fiber of the special orthogonal bundle $(W,q',s_Y)$ {\it through a representation} $\rho_y: \Gamma_y \ra \SO_n$ (cf \cite[Prop 1, page 06]{grot}). Since $\Gamma_y$ is cyclic, so its image lands inside a maximal torus of $\SO_n$. This implies that dimension $F^1_x$ is even, both in cases when $n$ is even and odd. This follows  because $F^1_x$ corresponds to $\oplus_{ g \in \Gamma_y^* \setminus \{ e \}} W_{y, g}$ which must be even dimensional owing to the description of the maximal torus in both even and odd cases. Let $E$ be the orthogonal bundle underlying $W$. The $\Gamma$-equivariant section $s_Y: Y \ra E(O_n/\SO_n)$ descends to the section $s_X: X \ra V(O_q/\SO_q)$ and conversely the pull-back of $s_X$ to $Y$ furnishes $s_Y$. 
\end{proof}

We make the above theorem into a definition.
\begin{defi} \label{qpsob} A parabolic degenerate special orthogonal bundle is  a degenerate orthogonal bundle with flags $(V,q,F^\bullet_\bullet, \cO_X )$ together with a global section $s_X: X \ra V(O_q/\SO_q)$.
\end{defi}

So a parahoric special orthogonal bundle can be viewed as a torsor under $\SO_q \ra X$.

\begin{rem} \label{lfso1} Notice that for every $x \in R$ we have $G^{(1 + m_x)/2}_x$ (if $m_x$ is odd) is even dimensional unlike in the orthogonal case (compare with Remarks \ref{lfo1} and \ref{lfs1}). Thus $\dim G^0_x \equiv \rank V (mod 2) $ for every $ x \in R$.
\end{rem}

\subsection{Interpretation of $\Gamma$-$\SO_n$ bundles as parahoric bundles}
We have checked the following propositions that are very straightforward to verify.

\begin{prop} Let $E$ be a $\Gamma$-$\SO_n$ bundle on $p:Y \ra X$ with non-trivial Chern class i.e it doesn't admit a lift to a $\Gamma$-$\Spin_n$ bundle. Then there exists a Galois cover $p_1: \tilde{Y} \ra Y$, such that the pull-back of $\tilde{E}$ admits a lift to a $\tilde{\Gamma}$-$\Spin_n$ bundle, where $\tilde{\Gamma}$ is the Galois group of $\tilde{Y} \ra X$. 
\end{prop}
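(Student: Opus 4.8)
The plan is to realise the lifting obstruction as a cohomology class on the quotient stack $\cY:=[Y/\Gamma]$ and to kill it by pulling back along a suitably ramified cover; in fact the argument works for an arbitrary $\Gamma$-$\SO_n$-bundle $E$, not just a non-liftable one. Recall that a $\Gamma$-$\SO_n$-bundle on $Y$ is the same as an $\SO_n$-torsor $\cE$ on the Deligne--Mumford curve-stack $\cY$, whose coarse space is $X$ and whose only stacky points are the branch points $x\in R$, with cyclic automorphism group of order $r_x=|\Gamma_y|$ for $p(y)=x$. The central extension $1\to\ZZ/2\to\Spin_n\to\SO_n\to1$ gives a boundary map $H^1(\cY,\SO_n)\to H^2(\cY,\ZZ/2)$, and the image $o(\cE)$ of $[\cE]$ is the obstruction to a $\Gamma$-$\Spin_n$-lift; this obstruction is compatible with pullback of stacks, so it suffices to produce a cover over which it dies.

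First I would construct the cover. Put $X^\circ=X\setminus R$, let $N\triangleleft\pi_1(X^\circ)$ be the normal subgroup attached to $Y^\circ=Y\setminus p^{-1}(R)$, and set $L=2\cdot\mathrm{lcm}_{x\in R}(r_x)$. With a choice of small loops $C_x$ around the punctures there is a homomorphism $\phi\colon\pi_1(X^\circ)\to\bigoplus_{x\in R\setminus\{x_0\}}\ZZ/L$ that kills the handle generators, sends $C_x$ to the $x$-th standard generator for $x\ne x_0$, and sends $C_{x_0}$ to $(-1,\dots,-1)$; under $\phi$ every $C_x$ has order $L$. Set $M=(\ker\phi)\cap N$, a finite-index normal subgroup of $\pi_1(X^\circ)$ contained in $N$, and let $\tilde Y$ be the normalisation of $X$ in the connected cover of $X^\circ$ attached to $M$. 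Then $p_1\colon\tilde Y\to Y$ and $q\colon\tilde Y\to X$ are Galois covers of smooth projective curves, $\tilde\Gamma:=\Gal(\tilde Y/X)=\pi_1(X^\circ)/M$, $\Gal(\tilde Y/Y)=N/M\triangleleft\tilde\Gamma$ with quotient $\Gamma$, and the ramification index $\tilde r_x$ of $q$ over $x$ equals the order of $C_x$ in $\tilde\Gamma$, which is a multiple of $L$ and therefore of $2r_x$. Hence every $\tilde r_x$ is even and, since $e(\tilde y/x)=e(\tilde y/y)\,e(y/x)$ and $e(y/x)=r_x$, also $e(\tilde y/y)=\tilde r_x/r_x$ is even for every $\tilde y$ over $y$; such a cover exists by the same sort of classical argument used elsewhere in the paper.

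Next I would compute $H^2$ of $\tilde\cY:=[\tilde Y/\tilde\Gamma]$, which is a curve-stack with coarse space $X$ and cyclic stacky points of order $\tilde r_x$ at the $x\in R$. A Mayer--Vietoris argument — equivalently the Leray spectral sequence of the coarse map $\tilde\cY\to X$ — shows that, because every $\tilde r_x$ is even, the restrictions to the stacky points assemble into an isomorphism $H^2(\tilde\cY,\ZZ/2)\cong\bigoplus_{x\in R}H^2\bigl(B(\ZZ/\tilde r_x),\ZZ/2\bigr)$: the nonzero class of $H^1\bigl(B(\ZZ/\tilde r_x),\ZZ/2\bigr)$ restricts nontrivially to the boundary circle of the orbifold disc precisely when $\tilde r_x$ is even, and this is what makes the ``global'' summand $H^2(X,\ZZ/2)=\ZZ/2$ disappear. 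Writing $\tilde E=p_1^*E$ with its induced $\tilde\Gamma$-linearisation and $\tilde\cE$ for the corresponding torsor on $\tilde\cY$, we conclude that $o(\tilde\cE)=0$ once each of its local components, one at each stacky point, vanishes.

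Finally I would verify those local components. At $x\in R$ the restriction of $\tilde\cE$ to the residual gerbe $B\tilde\Gamma_{\tilde y}=B(\ZZ/\tilde r_x)$ is the $\SO_n$-torsor given by the composite $\tilde\rho_{\tilde y}\colon\tilde\Gamma_{\tilde y}\twoheadrightarrow\Gamma_y\xrightarrow{\,\rho_y\,}\SO_n$, where $\rho_y$ is the representation through which $\Gamma_y$ acts on the fibre $E_y$ (it exists by the linearisation result invoked in the proof of Theorem \ref{pison}, and $\tilde\Gamma_{\tilde y}\to\Gamma_y$ is surjective). Its obstruction in $H^2(\ZZ/\tilde r_x,\ZZ/2)$ vanishes iff a generator $\tilde\gamma_0$ of $\tilde\Gamma_{\tilde y}$ lifts to an element of $\Spin_n$ of order dividing $\tilde r_x$; but $\tilde\rho_{\tilde y}(\tilde\gamma_0)=\rho_y(\gamma_0)$ for a generator $\gamma_0$ of $\Gamma_y$, and if $\tilde g\in\Spin_n$ is any preimage of it then $\tilde g^{\,r_x}\in\{\pm1\}$, so $\tilde g^{\,\tilde r_x}=(\tilde g^{\,r_x})^{\tilde r_x/r_x}=1$ because $\tilde r_x/r_x$ is even. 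Hence every local component of $o(\tilde\cE)$ vanishes, $o(\tilde\cE)=0$, and $\tilde E$ admits a $\tilde\Gamma$-$\Spin_n$-lift, as desired. I expect the cohomology computation to be the main obstacle: making precise why forcing the ramification indices to be even collapses $H^2(\tilde\cY,\ZZ/2)$ onto the purely local sum $\bigoplus_x H^2\bigl(B(\ZZ/\tilde r_x),\ZZ/2\bigr)$ — this is exactly where the doubling of ramification enters, and the point that must be handled with care.
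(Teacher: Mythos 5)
Your argument is correct in substance but reaches the conclusion by a genuinely different route than the paper. The paper works entirely with the uniformizing representation: it presents the orbifold fundamental group by generators $a_i,b_i,c_j$ with $c_j^{n_j}=1$, passes to the group $\tilde\Gamma$ in which the orders of the $c_j$ are doubled \emph{and one extra generator $d$ with $d^2=1$ is inserted into the long relation}, lifts $\rho(a_i),\rho(b_i),\rho(c_j)$ arbitrarily to $\Spin_n$, observes that $\tilde\rho(\tilde c_j)^{n_j}$ and the product $\prod[\tilde\rho(\tilde a_i),\tilde\rho(\tilde b_i)]\prod\tilde\rho(\tilde c_j)$ land in $\ker(\Spin_n\ra\SO_n)=\{\pm1\}$, and uses $d$ to absorb the residual sign. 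Your proof replaces this by an obstruction-theoretic computation on the quotient stack: the class in $H^2([\tilde Y/\tilde\Gamma],\ZZ/2)$ is shown to be purely local once some $\tilde r_x$ is even (your Mayer--Vietoris step is correct: the cokernel of $H^1(X_0)\oplus\bigoplus_x H^1(B(\ZZ/\tilde r_x))\ra\bigoplus_x H^1(S^1)$, which carries the global $\ZZ/2$, dies exactly when some $\tilde r_x$ is even), and the local classes vanish because $\tilde g^{\tilde r_x}=(\tilde g^{r_x})^{\tilde r_x/r_x}=1$ when $\tilde r_x/r_x$ is even. What you gain is a proof that works verbatim for any central isogeny with finite kernel, needs no auxiliary branch point, and makes transparent \emph{why} doubling suffices; what the paper's argument gains is brevity and the avoidance of any stack cohomology, at the cost of introducing one extra order-two ramification point on $Y$. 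Two small caveats: (i) your explicit cover construction via $\phi\colon\pi_1(X^\circ)\ra\bigoplus_{x\in R\setminus\{x_0\}}\ZZ/L$ degenerates when $|R|=1$ (the target is trivial, $C_{x_0}$ is a product of commutators, and no doubling occurs), so in that edge case you must first adjoin an auxiliary point to $R$ --- exactly the role played by $d$ in the paper; (ii) in your group-theoretic language the cohomological bookkeeping can be bypassed entirely: if the arbitrary lifts produce the sign $-1$ in the long relation, replacing $\tilde\rho(\tilde c_1)$ by $-\tilde\rho(\tilde c_1)$ fixes it without disturbing $\tilde\rho(\tilde c_1)^{2n_1}=1$, which shows your cover already works without invoking $H^2$ of the stack.
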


\begin{prop} \label{just} Parabolic degenerate special orthogonal bundles can be obtained by extending structure group from {\it parahoric Spin-bundles on $X$}.
\end{prop}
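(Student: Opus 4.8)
The plan is to run the correspondences of the previous two sections backwards. First I would start with a parabolic degenerate special orthogonal bundle $(V,q,F^\bullet_\bullet,\cO_X,s_X)$ on $X$ and apply the converse half of Theorem \ref{pison} to realise it as the descent of a $\Gamma$-$\SO_n$ bundle $(W,q',s_Y)$ on a ramified Galois cover $p\colon Y\to X$; let $E$ denote the underlying $\Gamma$-$\SO_n$ bundle. Second, if $E$ admits a $\Gamma$-equivariant lift to a $\Gamma$-$\Spin_n$ bundle I take $\tilde Y=Y$ and $\tilde\Gamma=\Gamma$; otherwise the Proposition preceding this one produces a two-sheeted cover $p_1\colon\tilde Y\to Y$ together with a $\tilde\Gamma$-$\Spin_n$ bundle $\hat E$ on $\tilde Y$ (with $\tilde\Gamma:=\Gal(\tilde Y/X)$) whose extension of structure group along $\Spin_n\to\SO_n$ is $p_1^*E$ carrying its pulled-back $\tilde\Gamma$-structure. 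Here $H:=\Gal(\tilde Y/Y)\cong\ZZ/2$ is normal in $\tilde\Gamma$ with $\tilde\Gamma/H=\Gamma$, because $Y\to X$ is Galois. Third, the Balaji--Seshadri equivalence between $\Gamma$-$G$ bundles on a Galois cover and parahoric $G$-bundles on $X$ (Theorem \ref{gammaGpt} and the stack isomorphism recalled just before it), applied to $p\circ p_1\colon\tilde Y\to X$ and $G=\Spin_n$, turns $\hat E$ into a parahoric $\Spin_n$-bundle $\cE$ on $X$, i.e.\ a torsor under a Bruhat--Tits group scheme with generic fibre $\Spin_n$.

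It then remains to extend the structure group of $\cE$ along $\Spin_n\to\SO_n$ and to check that the result is the bundle we began with. Extension of structure group is defined on $\Gamma$-$G$ bundles and is intertwined by the Balaji--Seshadri stack isomorphism with the corresponding operation on parahoric torsors, so extending $\cE$ along $\Spin_n\to\SO_n$ corresponds to extending $\hat E$, which by the preceding Proposition is the $\tilde\Gamma$-$\SO_n$ bundle $p_1^*E$; composing further with $\SO_n\hra O_n$ yields an $O_n$-bundle with a tautological reduction to $\SO_n$, i.e.\ a section, so $p_1^*E$ is recovered together with its canonical special orthogonal structure. Applying the forward direction of Theorem \ref{pison} to this $\tilde Y$-picture produces a parabolic degenerate special orthogonal bundle on $X$, and since Theorem \ref{pison} is a bijection it suffices to know that the degenerate special orthogonal bundle built from $p_1^*E$ on $\tilde Y$ agrees with the one built from $E$ on $Y$, namely $(V,q,F^\bullet_\bullet,\cO_X,s_X)$.

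This last identification is the main obstacle, but it reduces to a computation of invariant direct images. Taking $\Gamma$-invariants in stages ($H$ first, then $\tilde\Gamma/H=\Gamma$, using left-exactness of $p_*$) gives $(p\circ p_1)_*^{\tilde\Gamma}=p_*^\Gamma\circ(p_1)_*^{H}$, and $(p_1)_*^{H}(p_1^*W)=W\otimes_{\cO_Y}\bigl((p_1)_*\cO_{\tilde Y}\bigr)^{H}=W$ since $\tilde Y\to Y$ is Galois with group $H$; hence $(p\circ p_1)_*^{\tilde\Gamma}(p_1^*W)=p_*^\Gamma W=V$. The quadratic form $q$, the isotropic flags $F^\bullet_\bullet$ and the weights symmetric about $1/2$ at a point over $R$ are recovered unchanged, because the constructions of Theorems \ref{parahoricorth} and \ref{pison} are functorial in these invariant direct images and passing from $p$ to $p\circ p_1$ merely doubles the ramification index there, rescaling the filtration indices without altering either the rational weights or the subspaces $F^i_x\subset V_x$. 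Granting this, the section $s_X$ and all the compatibility conditions of Definition \ref{qpob} are inherited automatically, which completes the argument.
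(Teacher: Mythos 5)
Your argument is correct and follows essentially the same route as the paper: both reduce via Theorem \ref{pison} to lifting a $\Gamma$-$\SO_n$ bundle on $Y$, invoke the preceding Proposition to produce a $\tilde{\Gamma}$-$\Spin_n$ bundle on the two-sheeted cover $\tilde{Y}$, and then pass through the Balaji--Seshadri dictionary to land on parahoric $\Spin$-bundles on $X$ whose extension along $\Spin_n \ra \SO_n$ recovers the given object. The only difference is presentational: the paper organizes the descent through the representation spaces $Rep_f$ with prescribed lifted conjugacy classes $C^{f(y)}_y$ (thereby also enumerating \emph{all} lifts of a fixed parabolic type), whereas you work bundle-theoretically and make explicit, via the invariant direct image computation $(p\circ p_1)_*^{\tilde{\Gamma}}(p_1^*W)=p_*^{\Gamma}W$ and the invariance of the weights under doubling the ramification index, the check that the original datum on $X$ is recovered --- a point the paper leaves implicit in the remark that the parabolic datum $\{C_y\}$ ``is actually a datum on $X$.''
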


\begin{rem} The Proposition \ref{just} justifies that parabolic degenerate special orthogonal bundles may also be called as {\it parahoric special orthogonal bundles} and we shall do so in the rest of the paper. 
\end{rem}

\section{Parahoric Symplectic bundles}
\begin{defi} \label{qpsb} A degenerate symplectic bundle with flags denoted $(V,q, F^\bullet_\bullet, L)$  is a vector bundle $V$ on $X$ endowed with the datum
\begin{enumerate}
\item a symplectic form $q: V \ra V^* \otimes L$ with singularities  of order $\leq 1$ at a finite subset $R$ of points of $X$,
\item a flag structure $\{0 \} \subsetneq F^{m_x}_x \subsetneq F^{m_x-1}_x \subsetneq \cdots \subsetneq F^1_x \subsetneq F^0_x = V_x $  for each point $x \in R$, where the number $m_x$ can vary with $x \in R$,  
\end{enumerate}

satisfying the conditions
\begin{enumerate}
\item (compatibility of symplectic form $q$ and the flags) for every $x \in R$ we have $F^1_x = \Ker(q_x: V_x \ra (V^* \otimes L)_x)$ and dimension of $F^1_x$ is {\it even}.
\item  By the inclusion $\oplus_{x \in R} F^1_x \hra \oplus_{x \in R} V_x$, we pull-back  as follows:
$$\begin{matrix}
\xymatrix{
0 \ar[r] &  V(- R ) \ar[r] &  V \ar[r] &  \oplus_{x \in R} V_x \ar[r] &  0 \\
0 \ar[r] & V(-  R) \ar[r] \ar[u] & F^1(V) \ar[r] \ar[u] & \oplus_{x \in R} F^1_x \ar[r] \ar[u] & 0 
}
\end{matrix}$$

and denote by $F^1(V)$ the vector bundle so obtained.  Then $q$ restricted to $F^1(V)$ factorizes as $q_1$ through $L(- R )$ :
$$\begin{matrix}
  \xymatrix{
F^1(V) \times F^1(V) \ar[r]^{~~~~~~~~~~~~q} \ar@{.>}[rd]_{q_1} & L \\
                                               & L(- R ) \ar[u]
}
 \end{matrix}$$
\item the symplectic form $q_1$ induces a non-degenerate symplectic form $$q_{1,x}: F^1_x \ra L(- R )_x \simeq \CC.$$
\item for $i \geq (1+m_x)/2$, the flags $F^i_x$ are Lagrangian for  $(F^1_x,q_{1,x})$ and the remaining can be obtained by symplecto-complementation.

\end{enumerate} 
\end{defi}
We will often simply write $(V,q,F^\bullet_\bullet,L)$ for a degenerate symplectic bundle with flags.

\begin{rem} \label{lfs} The compatibility of the global symplectic form and the local ones can be expressed as follows: if we choose a basis $\mathfrak{B}$ of the localization of the free module $\VV_x$ it induces a basis of $V_x$ in which $q_{1,x}$ and $q_{0,x}$ can be expressed by square matrices of sizes $\dim(F^1_x)$ and $n - \dim(F^1_x)$ respectively in the form
$$J'= \begin{bmatrix}
     &   &     &   & 1 \\
     &   &      & -1 & \\
      &    & \iddots   &    & \\
      &   1 &     &    & \\
   -1 & & & &
\end{bmatrix}$$ then in terms of $\mathfrak{B}$ the quadratic form $\ol{q_x}$ on $\VV_x$ can be brought to the form
$\begin{bmatrix}
 J'_1 & \\
 & t J'_2
 \end{bmatrix}$
where $t$ is the local parameter of $L_x$. 

\end{rem}

 \begin{rem} \label{lfs1} As a consequence of the definition, we have a non-degenerate symplectic form $q_{0,x}: G^0=V_x/F^1 \ra \CC$. Notice that  on successive quotients $G^i_x = F^i_x/F^{i+1}_x$ we have  perfect pairings $q_{i,x}: G^{m-i}_x \times G^i_x \ra \CC$ for $1 \leq i \leq m_x/2$ and on quotients $G^0$ and $G^{(1+m_x)/2}$ (if $m_x$ is odd)  we have non-degenerate symplectic forms $q_{0,x}$ and $q_{(1+m_x)/2,x}$. The spaces $G^0$ and $G^{(1+m_x)/2}$ are constrained therefore to be {\it even} dimensional unlike the orthogonal group case (compare with Remarks \ref{lfo1} and \ref{lfso1}). Conversely, $G^i_x$ for $i \geq 1$ will endow $F^1_x$ with a non-degenerate symplectic form. Together with $G^0_x$, they endow the stalk $V_x$ with a degenerate symplectic form of  the desired type.
\end{rem}


Like Theorem \ref{parahoricorth}, one similarly proves the following theorem.

\begin{thm} \label{parahoricsymp}  Let $W$ be a $\Gamma$-$\GL_n$ bundle on $Y$ such that the quotient space $W(\GL_{2n}/\Sp_{2n}) \ra Y$ admits a $\Gamma$-invariant section $q'$. Then to such a bundle we can canonically associate  a  parabolic degenerate symplectic bundle  $(V,q, F^\bullet_\bullet, \cO_X)$ with flags at precisely the branch points of $p: Y \ra X$ and weights symmetric about $1/2$. Conversely, given $(V,q, F^\bullet_\bullet, \cO_X)$ a parabolic degenerate symplectic bundle with flags on a smooth projective curve $X$ with weights symmetric about $1/2$, there
 exists a Galois cover $p: Y \ra X$ with Galois group $\Gamma$ along with a vector bundle $W$ on $Y$ and a $\Gamma$-invariant section $q': Y \ra W(\GL_{2n}/\Sp_{2n})$ such that $(W,q')$ is mapped to  $(V,q)$ by the first part of the theorem. 
\end{thm}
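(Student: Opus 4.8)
The plan is to run the proof of Theorem~\ref{parahoricorth} line by line, replacing the symmetric bilinear form throughout by the alternating one and tracking the single sign change $q^* = -q$. As there, Proposition~\ref{canisoms} reduces the entire statement to a single ramification point $y \in Y$ over $x = p(y)$, the canonical identifications of $\Gamma^*_y$, $\PP(W_y)$ and the bilinear forms across a fiber being insensitive to the type of the form.

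For the forward direction, set $V = p_*^\Gamma W$. Proposition~\ref{orddeg} applies unchanged --- its proof uses only the Hecke description~(\ref{h-mod1}) and the identity $i_g + i_{g^{-1}} = |\Gamma^*_y|$ --- so $q$ descends to $V$ with singularities of order $\le 1$ and $F^1_x = \Ker(q_x)$. The multiplication-by-$t_y^{i-1}$ construction on $\WW_{|\Gamma|,y}/m_y^{|\Gamma^*_y|}$ then yields the filtration $F^\bullet_y$, the reduced weighted filtration $F^\bullet_x$ with weights $i/|\Gamma^*_y|$ symmetric about $1/2$, and the identifications $G^i_y = W_{y,h^i}$; the $\Gamma_y$-equivariant perfect pairing $W_{y,g} \times W_{y,g^{-1}} \to \CC$ from $B_y$ descends by Proposition~\ref{canisoms}(3) to the perfect pairing $G^i_x \times G^{m_x-i}_x \to \CC$. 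The one new feature over the orthogonal case is in the self-dual eigenspaces: whenever $g = g^{-1}$ the form restricted to $W_{y,g}$ is alternating and non-degenerate, so $W_{y,e}$ (and, when present, the $-1$ eigenspace) is even-dimensional; hence $\dim V_x/F^1_x$, and therefore $\dim F^1_x$, and the relevant middle quotients $G^0_x$, $G^{(1+m_x)/2}_x$, are even, which is precisely the extra clause in condition~(1) of Definition~\ref{qpsb} (cf.\ Remark~\ref{lfs1}) and makes the top flags Lagrangian rather than merely isotropic. Conditions~(2) and~(3) of Definition~\ref{qpsb} follow from Remarks~\ref{lfs} and~\ref{lfs1} exactly as conditions~(2) and~(3) of Definition~\ref{qpob} followed from Remarks~\ref{lfo} and~\ref{lfo1}, and the bound on the order of degeneracy of $q$ on $F^1(V)$ comes again from Proposition~\ref{orddeg}.

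For the converse, choose $p : Y \to X$ with ramification index $r_x = |\Gamma^*_y|$ a multiple of the l.c.m.\ of the denominators of the weights at each $x \in R$, as before. Pull back $0 \to V \xrightarrow{q} V^* \to S \to 0$, set $F^j_y = F^i_x$ for $\alpha^{i-1}_x r_x < j \le \alpha^i_x r_x$, form $T = \sum_i F^i_y \otimes m_y^{r_x-i}/m_y^{r_x} \subset p^*\VV_y/m_y^{r_x}$, and take the pull-out to obtain a naturally $\Gamma$-linearized $0 \to p^*V \to W \to T \to 0$. Extending $p^*q$ to a $\Gamma$-invariant alternating $q'$ on $W$ proceeds by the same two factorizations: first, the push-out of~(\ref{sespgb}) along $p^*q$ is the split extension (the diagram chase using $v \circ p^*q = p^*q(v)$ does not see symmetry), so $p^*q$ factors as $q_1 : p^*V \to W^*$; second, using the symplectic analogue of Proposition~\ref{extsim} --- the isomorphism $S \simeq \Ext^1_X(S,\cO_X)$, now induced by $q^* = -q$ --- together with the perfect pairings $G^i_y \times G^{r_x-i}_y \to \CC$ and the weight symmetry, one identifies $\Ext^1_Y(Q,\cO_Y)$ (with $Q$ the quotient of $T \to p^*S$) with $T$ and concludes that $q_1^* : W \to p^*V^*$ factors through $W^* \hookrightarrow p^*V^*$. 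Since $q_1^*$ and $W^* \hookrightarrow p^*V^*$ have the same cokernel $\Ext^1(T,\cO_Y)$, the induced $q' : W \to W^*$ is an isomorphism, i.e.\ a $\Gamma$-invariant non-degenerate symplectic form, and that the forward construction returns $(V,q)$ is then a formal check.

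Beyond transcribing the orthogonal argument, the only genuine work is bookkeeping the antisymmetry $q^* = -q$, which appears as a harmless sign in the self-duality of $S$ and in the module identifications $(G^i_y)^* \otimes m_y^i/m_y^{r_x} \simeq G^{r_x-i}_y \otimes m_y^{r_x-i}\cO_y/m_y^{r_x}$, and --- more to the point --- checking that the even-dimensionality of $F^1_x$, $G^0_x$ and $G^{(1+m_x)/2}_x$, which was a permitted option in the orthogonal setting, now holds automatically so that Definition~\ref{qpsb} is satisfied; this is the step I would treat as the main (if modest) obstacle, and it is handled as above via non-degeneracy of an alternating form on a self-dual eigenspace (equivalently, the shape of a maximal torus of $\Sp_{2n}$).
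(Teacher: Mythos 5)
Your proposal is correct and follows essentially the same route as the paper: the paper's proof consists precisely of the remark that the argument of Theorem~\ref{parahoricorth} carries over verbatim and that the only new point is the evenness of $\dim F^1_x$, which it derives from the fact that the cyclic group $\Gamma_y$ acts through a representation into $\Sp_{2n}$ and hence lands in a maximal torus. Your alternative justification of that evenness --- a non-degenerate alternating form on each self-dual eigenspace $W_{y,g}$ with $g=g^{-1}$ forces even dimension --- is equivalent (as you note yourself), and your correct identification of this as the one genuinely new step matches the paper exactly.
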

\begin{proof} The proof is similar to that of Theorem \ref{parahoricorth}. We need only check that $\dim F^1_x$ is {\it even}. One knows that $\Gamma_y$ acts on the fiber of the symplectic bundle $(W,q')$ {\it through a representation} $\rho_y: \Gamma_y \ra \Sp_{2n}$ (cf \cite[Prop 1, page 06]{grot}). Since $\Gamma_y$ is cyclic, so its image lands inside a maximal torus of $\Sp_{2n}$. This implies that dimension $F^1_x$ is even. 
\end{proof}

\begin{rem} By Theorem \ref{parahoricsymp}, the degenerate symplectic bundles with flags correspond to $\Gamma$-$\Sp$-bundles on some Galois cover which by \cite{vbcss} descend as parahoric symplectic bundles on $X$. We shall therefore call degenerate symplectic bundles with flags simply as parahoric symplectic bundles in the rest of the paper. 
\end{rem}

\section{$\Gamma$-$G$ but not parabolic bundles \label{GammaGnotpar}}
 Since the group $\Aut_x(V,q,F^{\bullet}_\bullet, s_X)$ realises all parahoric subgroups of $\SO_n(K_x)$, so it may or may not be conjugate to a subgroup of $\SO_n(\cO_x)$. It is of interest therefore to determine when it is conjugate to a subgroup of $\SO_n(\cO_x)$ in terms of the weights. For $G=\GL_n$ A.Weil showed (cf \cite[Example 2.4.5]{vbcss}) that when $|\alpha_i - \alpha_j | <1$ for all weights, then the unit group is a subgroup of $\GL_n(\cO_x)$. Furthermore for  $G=O_n, \Sp_{2n}$, it is remarked in \cite[Case I, page 8]{cssram} that when $|\alpha_i - \alpha_j | <1$ for all weights then once again in these cases  we have  $G(\cO_y)^{\Gamma_y} \subset G(\cO_x)$ but if for some $i,j$ we have $|\alpha_i - \alpha_j| =1$ then $G(\cO_y)^{\Gamma_y}$ is not conjugatable to a subgroup of $G(\cO_x)$ \cite[Case II]{cssram}.  In the following proposition, we reduce to this case. 
 Note that the weight in the condition below is as per Definition \ref{qpob} or \ref{qpsb}.
 
 \begin{prop} \label{parahnotparab} The local unit group $\Aut_x(V,q,F^{\bullet}_\bullet, s_X)$ (respectively $\Aut_x(V,q,F^\bullet_\bullet)$ for the symplectic case) is conjugate to a subgroup of $G(\cO_x)$ if and only if the  weight $1/2$ does not occur amongst the parabolic weights at least four times (respectively at least twice). If this happens for every point branch point, then we obtain a {\it parabolic} $\SO_n$-(resp. $\Sp_{2n}$)-bundle on $X$.  
 \end{prop}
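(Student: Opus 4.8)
The plan is to show that the local automorphism group $\Aut_x(V,q,F^\bullet_\bullet,s_X)$ (or its symplectic analogue) fails to be conjugate into $G(\cO_x)$ exactly when the weight $1/2$ appears, and that absence of this weight everywhere promotes the parahoric object to a genuine parabolic $G$-bundle. First I would translate the condition ``weight $1/2$ occurs at $x$'' into structural language via the correspondence of Theorem \ref{parahoricorth} (resp. Theorem \ref{parahoricsymp}): the weights of the flag at $x=p(y)$ are $\alpha^i_x = i/|\Gamma^*_y|$, and the subspace with weight $1/2$ is precisely $F^i_x$ with $i = |\Gamma^*_y|/2$, which by Remark \ref{lfo1} (resp. Remark \ref{lfs1}) is the middle piece $G^{(1+m_x)/2}_x$ carrying a self-pairing. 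Equivalently, in the local normal form of Remark \ref{lfo} (resp. Remark \ref{lfs}), the quadratic form $\ol{q_x}$ on $\VV_x$ has the shape $\mathrm{diag}(J_1, tJ_2)$, and the weight $1/2$ is present iff the block structure ``splits in half,'' i.e. iff $t^{1/2}$ effectively enters — the hallmark of the Case II phenomenon of \cite[Case II]{cssram}.

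Next I would invoke the cited results directly. For the plain $\GL_n$ situation A.\ Weil's computation (cf. \cite[Example 2.4.5]{vbcss}) gives that $|\alpha_i - \alpha_j| < 1$ for all weights forces $\Aut$ into $\GL_n(\cO_x)$; and by \cite[Case I, page 8]{cssram}, the same inequality yields $G(\cO_y)^{\Gamma_y} \subset G(\cO_x)$ for $G = O_n, \Sp_{2n}$ after conjugation. Since the weights are symmetric about $1/2$ and lie in $[0,1)$, the only way to have $|\alpha_i - \alpha_j| = 1$ is to have weights $0$ and arbitrarily close to $1$, or — because our weights genuinely come in the symmetric pattern — to have the weight $1/2$ itself paired with $0$ in a way forcing a difference of $1$ in the associated (reflected) weight datum; more precisely, the presence of $1/2$ is exactly what makes the extreme weights $\{0, \ldots\}$ and their mirror images collide at distance $1$. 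Conversely, if $1/2$ does not occur, all weights lie strictly between $0$ and $1$ avoiding the midpoint, the pairwise differences among the full symmetric weight system are strictly less than $1$, and Case I applies: the unit group is conjugate to a subgroup of $G(\cO_x)$. When this holds at every branch point, the Bruhat--Tits group scheme $\SO_q$ (resp. $\Sp_q$) is, locally at each $x$, conjugate to the constant group scheme $G_X$ restricted to $D_x$, hence the torsor under $O_q$ (resp. $\Sp_q$) described in Definition \ref{gaugegpS} is a torsor under a group scheme generically $G$ and everywhere $G$ up to conjugation, i.e. it descends to an honest parabolic $G$-bundle on $X$ with flags determined by the $F^\bullet_x$ and weights $\alpha^i_x$.

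The main obstacle I anticipate is making the ``only if'' direction airtight: one must argue that when $1/2$ does appear, no change of local trivialization over $\cO_x$ can absorb the square root $t^{1/2}$ appearing in the quadratic form, i.e. that $\Aut_x$ genuinely contains elements with poles that cannot be removed by an $\cO_x$-conjugation. The cleanest route is to pin this down lattice-theoretically: the parahoric subgroup of $G(K_x)$ attached to the weight datum is determined by a facet of the Bruhat--Tits building, and being conjugate into $G(\cO_x)$ is equivalent to that facet lying in the closure of the fundamental alcove corresponding to the hyperspecial vertex; the weight $1/2$ is precisely the barycentric coordinate that pushes the facet out of that closure (this is the content of \cite[Case II]{cssram}, which I would quote rather than reprove). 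So the proof reduces to: (i) identify, via Theorems \ref{parahoricorth} and \ref{parahoricsymp} together with Remarks \ref{lfo}, \ref{lfo1}, \ref{lfs}, \ref{lfs1}, the weight $1/2$ with the ``$tJ_2$ block splitting in half''; (ii) invoke \cite[Cases I and II]{cssram} and \cite[Example 2.4.5]{vbcss} for the conjugacy dichotomy; (iii) note that global conjugacy into $G(\cO_x)$ at every branch point, combined with the generic triviality $O_q|_{X \setminus R} \simeq G \times (X \setminus R)$, lets one glue a reduction of structure group and recover a parabolic $G$-bundle in the sense of \cite{ms}.
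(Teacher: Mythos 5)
Your overall strategy coincides with the paper's: translate the question into the language of \cite{cssram} and invoke the dichotomy of Cases I and II there, with Weil's criterion $|\alpha_i-\alpha_j|<1$ as the test. However, there is a genuine gap at the key step, and your stated mechanism for why the weight $1/2$ matters is wrong as written. All the original weights lie in $[0,1)$, so among them one \emph{never} has $|\alpha_i-\alpha_j|=1$; if the test were applied to the original weight system, it would declare every case to be Case I, regardless of whether $1/2$ occurs. Your sentence claiming the difference $1$ arises from ``weights $0$ and arbitrarily close to $1$'' or from $1/2$ ``paired with $0$'' does not describe anything that actually happens in this weight system, and the subsequent assertion that avoiding the midpoint makes all pairwise differences strictly less than $1$ is equally true when the midpoint is present --- so it cannot be the discriminating condition.

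What is missing is the explicit renormalization that the paper performs before applying the test: one conjugates $\GL_n(K_x)$ by $\mathrm{diag}(\Id_k,\Id_{(n-k)/2},z\,\Id_{(n-k)/2})$ (here $z$ is the local parameter and $n-k=\dim F^1_x$ is even), which carries the group preserving the degenerate form $\mathrm{diag}(J_k, zJ_{n-k})$ onto the group preserving the nondegenerate form $\mathrm{diag}(J_k,J_{n-k})$, i.e.\ onto a standard copy of $\SO_n(K_x)$ (resp.\ $\Sp_{2n}(K_x)$). This conjugation shifts each weight $\alpha^i_x>1/2$ to $\alpha^i_x-1$ and splits the multiplicity of the weight $1/2$ into two new weights $1/2$ and $-1/2$, each with half the old multiplicity. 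Only \emph{after} this shift does the criterion $|\alpha_i-\alpha_j|<1$ become the right test, and the unique way a difference of exactly $1$ can appear among the new weights is the pair $\{1/2,-1/2\}$ --- which exists precisely when the original weight $1/2$ occurred. Your proposal gestures at this (the remark about absorbing $t^{1/2}$ and the building-theoretic reformulation), but quoting \cite[Case II]{cssram} does not by itself supply the bridge from ``weight $1/2$ occurs in the parabolic datum of Definition \ref{qpob}'' to ``the renormalized weight system has a pair at distance $1$''; that bridge is the actual content of the paper's proof and needs to be written out. The final globalization step (gluing local conjugations into a parabolic $G$-bundle when every branch point is in Case I) is fine and matches the paper.
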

\begin{proof} We reduce our case to the one in \cite{cssram} by some elementary conjugations as follows: In the definitions of parahoric orthogonal and symplectic bundles, we have considered the quadratic form $q$ to be of the form $\begin{pmatrix}  J_k & 0 \\  0 & J_{n-k}z \end{pmatrix}$ where $z$ is the local parameter. The integer $n-k$ in all cases ($B_n$, $C_n$, $D_n$ ) must necessarily be even. This follows because more generally for a connected group $G$, a $\pi$-$G$ bundle $E$ is locally {\it defined  by a representation} $\rho_y: \pi_y \ra G$ (cf \cite[Prop 1, Page 06]{grot}). To illustrate the proof we work with the special orthogonal group. Denoting by $C$ the conjugation of $\GL_n(K_x)$ by $\begin{pmatrix}  \Id_k & 0 & 0 \\  0 & \Id_{(n-k)/2} & 0 \\ 0 & 0 & z \Id_{(n-k)/2} \end{pmatrix}$, it can be checked that we have the following factorization \begin{equation} \xymatrix{ \Aut_x(V,q,F, s_X) \ar[r] \ar@{.>}[d] & \GL_n(\cO_x) \ar[r] & \GL_n(K_x) \ar[d]^C \\ \SO_n(K_x)  \ar[rr]            && \GL_n(
K_x)   } \end{equation} where $\SO_n$ preserves $\begin{pmatrix}  J_k & 0 \\  0 & J_{n-k} \end{pmatrix}$. When $k$ is even, this last quadratic form upon changing basis by $M= \begin{pmatrix}  I_{k/2} & 0 & 0 & 0 \\ 0 & 0 & I_{(n-k)/{2}} & 0 \\ 0 & 0 & 0 & I_{(n-k)/2} \\ 0 & I_{k/2} & 0 & 0 \end{pmatrix}$ can be brought to the standard anti-diagonal form. The case $k$ odd is similar. This has the effect of replacing the weights $\alpha^i_x$, in our situation, by $\alpha_x^i -1$ if $\alpha_x^i > 1/2$ and making the multiplicities of the new weights $1/2$ and $-1/2$ to be half of the previous multiplicity of the old weight $1/2$. Now  the condition  $|\alpha_i - \alpha_j| < 1$ is satisfied for all new weights if and only if the weight $1/2$ did not occur previously.  Thus by \cite[Cases I  page 8]{cssram} the local unit group is conjugatable to a subgroup of $G(\cO_x)$. Now we investigate the condition that the weights lie on the far wall of the Weyl alcove which is defined by the highest root $\alpha_0$. In the case of $\SO_{2n}$ and $\SO_{2n+1}$ it we have $\alpha_0= \varpi_1 + \varpi_2$, in $\Sp_{2n}$ we have $\alpha_0= 2 \varpi_1$. Now the assertion follows since conjugacy classes of parahoric subgroups correspond to subsets of the extended Dynkin diagram, and subsets containing the maximal root are not conjugatable to subgroups of $G(\cO_x)$. \end{proof}

Thus for generic weights, the unit group is conjugatable to a subgroup of $G(\cO_x)$.

\section{Criterion for stable parahoric $G=\SO_n, \Sp_{2n}$  bundles on $\PP^1$ }
 \subsection{Openness}
 We first prove that (semi)-stability of parahoric $G$-bundles is an open property. Since  parahoric $G$-bundles correspond to Galois principal $G$-bundles on some cover, so the question quickly reduces to that openness for $G$-bundles. This is surely known to experts, but we couldn't find a precise reference.
 
 
  We first recall 
\begin{defi} \label{ssGammaG} \cite[6.3.2,v3]{vbcss} Let $G$ be a reductive algebraic group. A $(\Gamma,G)$-bundle $E$ on $Y$ is called $\Gamma$-semistable (resp. $\Gamma$-stable) if for every maximal parabolic subgroup $P \subset G$ and every $\Gamma$-invariant reduction of structure group $\sigma: Y \ra E(G/P)$, and every non-trivial dominant character $\chi: P \ra \GG_m$, we have $deg \chi_* \sigma^*E (\leq) 0$.
\end{defi}
We also recall that a character $\chi : P \ra \CC^*$ is called {\it dominant} if it is given by a positive linear combination of fundamental weights for some choice of a Cartan subalgebra and positive system of roots (cf \cite{rama}, p. 131). A dominant character is trivial on the connected component $Z_0$ of the center $Z(G)$ of $G$.

\begin{prop} \label{openness} (Semi)-stability of parahoric $G$ bundles with fixed parabolic datum is an open property. \end{prop}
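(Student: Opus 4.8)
The plan is to reduce the statement to openness of semistability for ordinary principal $G$-bundles on a single curve, using the dictionary of Section \ref{ai}. First I would fix the parabolic datum $\tau$; by Theorem \ref{gammaGpt} the moduli of parahoric $G$-bundles on $X$ with datum $\tau$ is isomorphic to the moduli $Bun_Y^\tau(\Gamma,G)$ of $\Gamma$-$G$-bundles of local type $\tau$ on the associated ramified cover $p:Y\ra X$, and this isomorphism carries (semi)stable objects to $\Gamma$-(semi)stable objects in the sense of Definition \ref{ssGammaG}. So it suffices to prove that, in a flat family $\mathcal{E}\ra Y\times T$ of $\Gamma$-$G$-bundles of fixed local type over a base $T$, the locus of $t\in T$ for which $\mathcal{E}_t$ is $\Gamma$-semistable (resp. $\Gamma$-stable) is open.

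For this I would run the standard boundedness-plus-semicontinuity argument, adapted to the $\Gamma$-equivariant setting. The key point is that $\Gamma$-(semi)stability is tested only against \emph{$\Gamma$-invariant} reductions $\sigma:Y\ra \mathcal{E}_t(G/P)$ to maximal parabolics $P$, and against dominant characters $\chi$. The first step is a boundedness statement: because the local type $\tau$ is fixed and $Y$ is a fixed curve, there is an integer $N$ (depending only on $\tau$, $g_Y$, $G$) such that any destabilizing $\Gamma$-invariant reduction $\sigma$ has $\deg\chi_*\sigma^*\mathcal{E}_t$ bounded below by $-N$; equivalently the relevant $\Gamma$-invariant parabolic reductions that could destabilize form a bounded family. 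This is the analogue of the classical fact (for plain $G$-bundles) that the Harder--Narasimhan type of an unstable bundle in a bounded family is itself bounded; the $\Gamma$-equivariance only restricts the class of reductions, so boundedness is inherited. Having fixed $N$, the relative $\Gamma$-invariant reductions of bounded degree are parametrized by a scheme $Q$ proper over $T$ (a closed subscheme of a relative Hilbert scheme of sections of $\mathcal{E}(G/P)\ra Y\times T$, cut out by $\Gamma$-invariance, which is a closed condition), and on $Q$ the function $(\sigma\mapsto \deg\chi_*\sigma^*\mathcal{E})$ is locally constant. Then the non-$\Gamma$-semistable locus in $T$ is the image under the proper map $Q\ra T$ of the closed locus where some $\deg\chi_*\sigma^*\mathcal{E}>0$ (resp. $\ge 0$), hence closed; its complement is the $\Gamma$-semistable (resp. $\Gamma$-stable) locus, which is therefore open.

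Two technical points deserve care. First, one must know that the local type is \emph{constant} in the family (this is part of "with fixed parabolic datum" in the statement), so that the isomorphism of Theorem \ref{gammaGpt} applies uniformly over $T$ and the boundedness constant $N$ can be chosen uniformly; this is exactly why the hypothesis is needed. Second, one must check there are only finitely many relevant pairs $(P,\chi)$ to consider up to the action of automorphisms: $P$ ranges over the finitely many conjugacy classes of maximal parabolics of $G$, and for each $P$ the dominant characters $\chi:P\ra\GG_m$ form a finitely generated monoid, but only $\chi$ in a bounded region can witness destabilization at bounded degree, so finitely many suffice. I expect the main obstacle to be the equivariant boundedness step: one needs that, for fixed $Y$ and fixed local type $\tau$, the $\Gamma$-invariant parabolic reductions of $\Gamma$-$G$-bundles that violate the degree inequality by a bounded amount form a bounded family. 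The cleanest route is probably to pass to the associated twisted (Bruhat--Tits) group scheme picture on $X$ of \cite{vbcss}, where parahoric reductions to parahoric parabolic subgroup schemes live on the \emph{fixed} curve $X$ and classical boundedness for torsors under group schemes of finite type over a curve applies directly; alternatively one argues on $Y$ using that $p_*$ of a $\Gamma$-invariant reduction is controlled by the fixed ramification data. Either way, once boundedness is in hand, the semicontinuity argument is routine.
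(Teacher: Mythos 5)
Your first step coincides with the paper's: fix the local type, invoke Theorem \ref{gammaGpt} to pass to $\Gamma$-$G$-bundles on the cover $Y$, and then show the destabilizing locus in a family is closed. But there is a genuine gap in your closedness argument. You assert that the $\Gamma$-invariant reductions of bounded degree are parametrized by a scheme $Q$ \emph{proper} over $T$, as a closed subscheme of a relative Hilbert scheme of sections of $\mathcal{E}(G/P)\ra Y\times T$. The locus of graphs of sections inside the Hilbert scheme of $\mathcal{E}(G/P)$ is \emph{open}, not closed: a family of sections can degenerate to a cycle with vertical components, which is no longer a section. So $Q$ is not proper over $T$, and the image of your ``bad'' locus need not be closed. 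This is exactly the technical difficulty that the classical treatment (Ramanathan) is designed to circumvent, and it is the heart of the matter rather than a routine point; your own flagged worry (equivariant boundedness) is comparatively mild.

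The paper's proof avoids this in two ways. For semistability it sidesteps reductions entirely: a $\Gamma$-$G$-bundle is semistable iff its underlying $G$-bundle is, which holds iff $\mathrm{Ad}(E)$ is a semistable vector bundle, so openness follows from the vector-bundle case. For stability it works inside a semistable family and quotes Ramanathan's Lemma (cf.\ \cite[3.22]{ar}): instead of trying to compactify the space of reductions, one records only the degree-zero line bundle $\chi_*\sigma^*E$, parametrizes these by $\mathrm{Pic}^0(\Gamma,Y)$ (represented by a Jacobian, with a $\Gamma$-Poincar\'e family), shows that the locus of pairs $(t,L)$ admitting an admissible $\Gamma$-reduction with $\chi_*\sigma^*E\simeq L$ is closed in $T\times \mathrm{Pic}^0(\Gamma,Y)$, and then uses properness of the projection to $T$. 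If you want to salvage your outline, you must either import Ramanathan's admissible-reduction machinery (his Lemmas 3.16--3.21, which control what happens when sections degenerate) or adopt the Jacobian parametrization as the paper does; the bare Hilbert-scheme-of-sections argument does not close up.
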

\begin{proof}
Firstly by \cite[Theorem 6.3.5, v3]{vbcss}, parahoric $G$-bundles correspond to $\Gamma$-$G$ bundles on some Galois cover $Y \ra X$. Now a $\Gamma$-$G$-bundle is semi-stable if and only if the underlying principal $G$-bundle is semi-stable. A principal $G$-bundle $E$ is semi-stable if the adjoint vector bundle $Ad(E)$ is semi-stable (\cite[Cor 3.18]{ar}). So for openness of semi-stability of parahoric $G$-bundles, the question boils down to openness of semi-stability of vector bundles for which the reference is \cite[cor 1 to Prop 9, Chapter 4]{css}.

Thus to show openness of stability of $\Gamma$-$G$ bundles, it suffices to show openness within a family of {\it semi-stable} bundles. So let us assume that we have a {\it semi-stable} family $E \ra T \times Y$. 


Owing to the fact that extension of structure group by conjugation leaves invariant a principal $G$-bundle, we need to consider parabolic subgroups only upto conjugation.
Now since the set of all parabolic subgroups of $G$-upto conjugation can be identified with the subsets of the Dynkin diagram of $G$, hence they form a {\it finite} set. 

Similarly the dominant Weyl chamber is generated by finitely many characters. 
For a point $t \in T$, let us call $E_t$ {\it strictly semi-stable} with respect to a parabolic subgroup $P$ and a character $\chi: P \ra \GG_m$ if there exists a  $\Gamma$-equivariant reduction of structure group $\sigma: Y \ra E_t(G/P)$ such that $\deg( \chi_* \sigma^* E_t)=0$. 
Hence it suffices to check that {\it strictly semi-stable} $\Gamma$-$G$-bundles  with respect to a single arbitrary pair $(P, \chi)$ form a {\it closed} set. We may further assume that $P$ is a maximal parabolic since we want to check for stability.

We now quote
\begin{lem} (cf \cite[3.22]{ar}) Let $\zeta \ra S \times X$ be a family of semi-stable $G$-bundles. Let $P$ be a maximal parabolic subgroup of $G$. Let $\chi$ be the dominant character on $P$. Let $L$ be a line bundle of degree zero on $X$. Then the set 
\begin{eqnarray*} 
S_{P,L}= \left\{ s \in S | 
\begin{array}{rl}
\zeta_s \, \text{has an admissible reduction } \, \sigma \, \text{of structure} \\ \text{ group to} \,  P \, \text{such that} \, \chi_* \sigma^* E \simeq L 
\end{array}
\right\}
\end{eqnarray*}
is a closed subset of $S$.

\end{lem}

The proof of the above lemma generalizes directly word for word if we work with a $\Gamma$-$G$ bundle $\zeta$ and $\Gamma$-equivariant reductions $\sigma$.(More precisely, the proof evokes  \cite[Lemmas 3.16,3.19,3.20, 3.21]{ar} and  \cite[Corollary 3.21.1]{ar} and one checks that they all generalize.) 

For our purposes, we need a further generalization. Let $Pic^0(\Gamma,Y)$ denote the space of $\Gamma$ line bundles on $Y$ of degree zero of {\it fixed local type} i.e the representation $\Gamma_y \ra \CC^*$ has been fixed at ramification points $y \in R$. Since their invariant direct image can be described as a line bundle on $X$ of fixed degree and some fixed parabolic weights at the branch points, so $Pic^0(\Gamma,Y)$ is represented by $\Jac^d(X)$ for some integer $d \leq 0$. Now it follows easily that their is a universal family of $\Gamma$-line bundles namely the $\Gamma$-Poincar\'e family $\cP$ on $Y \times Pic^0(\Gamma,Y)$. Indeed one may take the Poincar\'e line bundle $\cP_X$ on $X \times \Jac^0(X)$ and pull it back to $Y \times \Jac^0(X)$ and then perform the necessary Hecke-modifications at the branch points.

Set $S= T \times Pic^0(\Gamma,Y) $, let $p_{13}: S \times X \ra T \times Y$ denote the projection, 
set $\zeta= p_{13}^*E$, let $p_{23}: S \times Y \ra Pic^0(\Gamma,Y) \times Y$ denote the projection and set $L=p_{23}^*\cP$.

Applying the generalization of the above lemma it follows that
\begin{eqnarray*}
S_{P}  =  \left\{  (t,L) \in S |    
\begin{array}{rl}
E_t \, \text{has an admissible $\Gamma$-reduction} \, \sigma \, \text{of } \\ \text{ structure group to} \,  P \,  \text{such that} \, \chi_* \sigma^* E \simeq L  
\end{array}
\right\}
\end{eqnarray*}
is a closed subset $C$ of $S=T \times Pic^0(\Gamma,Y)$.
Now since the projection $S \ra T$ is proper, so the image of $C$ in $T$ is also closed. It consists of $t$ such that $E_t$ is strictly $(P,\chi)$ semi-stable. This was required to be shown.

\end{proof}




\begin{defi} \label{isotropic}  Let $(V,q,L)$ be a parahoric orthogonal bundle. We say that a sub-bundle $W$ of $V$ is isotropic if for all $x \in X \setminus{R} $, the fiber $W_x \subset V_x$ is an isotropic subspace and for $x \in R$, $W_x \cap F^1_x$ is an $q_{1,x}$-isotropic subspace of $F^1_x$ and the image of $W_x$ in $V_x/F^1_x$ is $q_{0,x}$-isotropic subspace.
\end{defi}

As in the case of Parabolic vector bundles, an isotropic sub-bundle $W$ of $V$ inherits a flags by intersecting $W_x \cap F^{\bullet}_x$ and also weights. Therefore seen as a {\it parabolic vector bundle}  it becomes possible to define its slope which we will denote by $par \mu(W)$ and parabolic degree which we will denote by $pardeg(W)$. The very precise definition of parabolic degree is technical (cf \cite{ms}) and  is not so relevant for our purpose here.
 
\begin{defi} \label{defss}
We say that a parahoric orthogonal or symplectic bundle $(V,q,L)$ is (semi)-stable if for every isotropic sub-bundle $W$ in the sense of Definition \ref{isotropic}, the inequality of parabolic slope $par\mu(W) (\leq) par\mu(V)$ is satisfied. 

\end{defi}


For the sake of completeness we prove,
\begin{Cor} The underlying degree of a parabolic degenerate orthogonal bundle with flags $(V,q,F^\bullet_x, \cO_X)$ associated to a 
$\Gamma$-$O_n$ bundle is equal to $- \frac{1}{2} \sum_{x \in R} \dim(F^1_x)$.
\end{Cor}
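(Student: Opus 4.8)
The plan is to compute the degree of $V$ directly from the defining short exact sequence relating $V$ and $V^*$. Recall that $V$ carries a generically non-degenerate quadratic form $q\colon V \to V^*$ (we have reduced to $L = \cO_X$) with singularities of order $\leq 1$, so we have the exact sequence
\[
0 \ra V \stackrel{q}{\ra} V^* \ra S \ra 0,
\]
where $S$ is a skyscraper sheaf supported on $R$. Taking degrees gives $\deg V^* - \deg V = \deg S = \sum_{x \in R} \dim_\CC S_x$, and since $\deg V^* = -\deg V$ we get $-2\deg V = \sum_{x \in R}\dim_\CC S_x$, i.e. $\deg V = -\tfrac12\sum_{x\in R}\dim_\CC S_x$.

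The heart of the argument is therefore to identify the length of $S$ at each $x$ with $\dim F^1_x$. By condition (1) of Definition \ref{qpob}, $F^1_x = \Ker(q_x\colon V_x \to V^*_x)$, the kernel of the \emph{fiberwise} map; because $q$ has singularities of order exactly $\leq 1$, the cokernel sheaf $S$ has depth one at each $x \in R$, so $S \simeq \oplus_{x\in R} S_x$ with $S_x$ the geometric fiber. Tensoring the sequence above with the residue field $\CC_x$ and using that $\Tor_1^{\cO_X}(S,\CC_x) \simeq S_x$ (depth-one), we obtain $\dim_\CC S_x = \dim_\CC \Ker(q_x) = \dim F^1_x$; this identification is exactly the one already exploited in the proof of Proposition \ref{Ltotriv} where $F^1(V)_x$ was canonically identified with $\Tor^1_{\cO_X}(S,\CC_x)$. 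Substituting into the degree count finishes the proof: $\deg V = -\tfrac12\sum_{x\in R}\dim F^1_x$.

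The only genuine point requiring care is the bookkeeping that $\deg S$ (as a torsion sheaf) equals $\sum_x \dim_\CC S_x$ rather than some higher multiple — this is precisely the content of the hypothesis ``singularities of order $\leq 1$'', which forces $S$ to be annihilated by the maximal ideal $m_x$ at each point, so its length coincides with its fiber dimension. Everything else is formal. I expect the main (very mild) obstacle to be making sure the same conclusion holds when one starts from a $\Gamma$-$O_n$ bundle $(W,q')$ on $Y$ and passes down via Theorem \ref{parahoricorth}: there one should note that the associated $(V,q)$ is exactly the object to which the above sequence applies, with the $F^1_x$ constructed in that proof, so no separate computation on $Y$ is needed.
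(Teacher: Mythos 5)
Your proof is correct, but it takes a genuinely different route from the paper's. The paper's argument is a one-liner on the parabolic side: the parabolic degree of $V$ (as the invariant direct image of the degree-zero bundle $W$) vanishes, and since the perfect pairings force $\dim G^i_x = \dim G^{m_x-i}_x$ while the weights satisfy $\alpha^i_x + \alpha^{m_x-i}_x = 1$, the total weight contribution at $x$ is exactly $\tfrac12\dim F^1_x$, whence $\deg V = -\tfrac12\sum_x \dim F^1_x$. You instead compute the underlying degree directly from the sequence $0 \ra V \stackrel{q}{\ra} V^* \ra S \ra 0$, using $\deg V^* = -\deg V$, the order-$\leq 1$ hypothesis to equate length with fiber dimension, and condition (1) of Definition \ref{qpob} together with the $\Tor_1$ identification to get $\dim S_x = \dim F^1_x$. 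Both arguments are sound. What yours buys is generality and independence from the Galois-theoretic origin: it proves the degree formula for \emph{any} degenerate orthogonal bundle with flags valued in $\cO_X$, whether or not it arises from a $\Gamma$-$O_n$ bundle, and it makes no use of the weights at all. What the paper's buys is brevity and a conceptual link to parabolic degree zero (which is the invariant that actually matters for the stability discussion that follows); combined with your computation it shows, conversely, that weight-symmetry about $1/2$ forces the parabolic degree of such an object to vanish automatically.
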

\begin{proof} This follows from the fact that the parabolic degree is zero and that the weights are distributed symmetrically about half.
\end{proof}

\begin{rem} \label{br3} Our  definition \ref{defss} agrees with the one in \cite{bhosledeg} which was made for parahoric bundles `coming from' two-sheeted covers.
\end{rem}
\subsection{Passage from Parahoric to Parabolic} In the following proposition,  notice that in the case $m_x$ is odd, the length of the Flag has increased by one, but in the even case, it remains the same.  For the sake of completeness, we prove it in general. A reader familiar with parabolic vector bundles may  consider the case of just one flag and the case of two flags separately. They already reveal the mechanism of the proof. The idea is simply to make a Hecke-modification exploiting the fact that the dimension of $F^1_x$ is {\it even} for the case of $\SO_n$ and $\Sp_{2n}$, to make the quadratic form $q$ into $\tilde{q}$ which will become {\it everywhere} non-degenerate. To make the book keeping less tedious, by means of a series of remarks after the Proposition, we will try to explain the effect on Flags and weights. 
 
 \begin{prop} \label{parah2parab} Let $(V,q,\{F^{\bullet}_x, \alpha^{\bullet}_x \}_{x \in R})$ be a parahoric special orthogonal bundle on $\PP^1$ of parabolic degree zero. For every $x \in R$, if $m_x$ is even, then $F^{1+m_x/2}_x$ is a maximal $q_{1,x}$-isotropic subspace of  $F^1_x$ and we choose it, else when $m_x$ is odd, we shall make a choice of a maximal $q_{1,x}$-isotropic subspace $K_x$ of $F^1_x$ containing the subspaces $F^i_x$ for $i \geq 1 + m_x/2$.  Let $\tilde{V}$ be a vector bundle defined using Rees theorem \ref{rees} by the inclusion $K_x \hra V_x$ for every $x \in R$. Then
 
\begin{enumerate}
\item the quadratic form $q$ on $V$ extends uniquely to a non-degenerate quadratic form $\tilde{q}$ on $\tilde{V}$.
\item by  the inclusion $V_x/K_x \hra \tilde{V}_x$, define $\tilde{F}^i_x$ as the image of $F^i_x/K_x$ for $i \leq 1+m_x/2$. Then the flags $\tilde{F}^i_x$ are $\tilde{q}_x$-isotropic. We define $V_x/K_x$ as $\tilde{F}^0_x$.
\item the flags $\{F^i_x\}$ for $i \geq 1+m_x/2$ define a filtration of $K_x$. We can take their pull-back $\tilde{F}^i_x$ to $\tilde{V}_x$ by the projection $\tilde{V}_x \ra K_x$. Then in the order of inclusion, the $\{\tilde{F}^i_x\}$ form an orthogonal grassmanian i.e the ortho-complement of the $j$-th smallest subspace is the $j$-th largest. 
\item a sub-bundle $W$ of $V$ defines by Hecke-modification $W_x \cap K_x \hra W_x$ a sub-bundle $\tilde{W}$ of $\tilde{V}$. Then $W$ is isotropic in the sense of Definition \ref{isotropic} if and only if $\tilde{W}$ is isotropic with respect to $\tilde{q}$ in the usual sense.
 \item the parabolic orthogonal bundle $(\tilde{V}, \tilde{q},\tilde{F})$ is a parabolic special orthogonal bundle. 

\end{enumerate}

 \end{prop}

For the convenience of the reader we make some remarks to clarify the effect of Hecke-modification by $K_x$ on flags and weights.
 
 \begin{rem} It is also clear that $\tilde{V}$ comes along with projection maps $\tilde{V} \ra \oplus_{x \in R} K_x$ from which by taking kernels, it is possible to recover $(V,q,F^{\bullet}, \alpha^{\bullet})$ from $\tilde{V}$.
\end{rem}

\begin{rem} \label{newpp} By choosing a $K_x$ (if $m_x$ is odd), one replaces the graded piece $(G^{(1+m_x)/2}_x, q_{(1+m_x)/2,x})$ by the perfect pairing $$F^{(1+m_x)/2}_x/K_x \times K_x/F^{(1+m_x)/2+1}_x \ra \CC.$$
\end{rem}


 \begin{proof} After Remark \ref{lfo} the first four assertions are just local checks at $x$. For see the first, let us suppose that the form $q_x$ is represented as $x_1 x_i + x_2 x_{i-1}+ \cdots  + x_{i/2} x_{i/2+1} (\; \text{or} \; x^2_{(1+i)/2}) + t(x_{i+1}x_n + x_{i+2} x_{n-1} + \cdots + x_{(n+i)/2} x_{(n+i)/2 +1})$ in terms of the basis $\{ e_i \}$ of the stalk at $x$ of the locally free module $V_x$. Then after Hecke modification, if $\{ e_i'\}$ denote the basis of $\tilde{V_x}$, then we have absorbed $t e_j' =e_j$ for $j \geq (n+i)/2$ (one knows that $n-i$ is being the dimension of $F^1_x$ is even) to get $\tilde{q_x} = x_1 x_i + x_2 x_{i-1}+ \cdots + x_{i/2} x_{i/2+1} (\; \text{or} \; x^2_{(1+i)/2}) + (x_{i+1}x_n + x_{i+2} x_{n-1} + \cdots + x_{(n+i)/2} x_{(n+i)/2 +1})$, which is non-degenerate.   Notice that when $m_x$ is  even, we have the following 
 $$\{ 0 \} \subsetneq \tilde{F}_x^{m_x/2 } \subsetneq \cdots \subsetneq \tilde{F}^1_x \subsetneq \tilde{F}^0_x \subsetneq \tilde{F}^{m_x}_x \subsetneq \cdots \subsetneq \tilde{F}_x^{1+m_x/2 } = \tilde{V}_x $$
 and when $m_x$ is odd, we have
$$\{ 0 \} \subsetneq \tilde{F}_x^{(1+ m_x)/2} \subsetneq \cdots \subsetneq \tilde{F}^1_x \subsetneq \tilde{F}^0_x \subsetneq \tilde{F}^{m_x}_x \subsetneq \cdots \subsetneq \tilde{F}_x^{(m_x-1)/2 } \subsetneq \tilde{V}_x. $$
Here again we see that in the case $m_x$ is odd, the length of the Flag has increased by one because $\tilde{F}_x^{(m_x-1)/2 } \subsetneq \tilde{V}_x$, (but in case $m_x$ is even we have $ \tilde{F}_x^{\lceil (1+m_x)/2 \rceil} = \tilde{V}_x$).

 For the next assertion, recall that $\tilde{V}$ fits into the short exact sequence 
 \begin{equation} \label{defvtilde}
 0 \ra V \ra \tilde{V} \ra \oplus_{x \in R} K_x \ra 0
 \end{equation}
 Let $O_q^c \ra X$ denote the group scheme of the completed parahoric orthogonal bundle $(V,q,F^\bullet)$ with $K_x$ (if $m_x$ is odd). The operation of modification that we have described corresponds (cf \cite[Section 5.3 Hecke-correspondences]{vbcss}) to lifting $(V,q,F)$ to the completed flags
 \begin{equation*}
 \xymatrix{
 & Bun_X(O_q^c) \ar[ld] \ar[rd] & \\
 Bun_X(O_q) & & Bun_X(O_{\tilde{q}})
 }
 \end{equation*}
 which is always possible since $Bun_X(O_q^c) \ra Bun_X(O_q)$ is a projective morphism and then taking the image by the other arrow. In other words, by \ref{defvtilde} it follows that the local automorphisms of $V$ as a parahoric orthogonal bundle that further respect $K_x$ on the special fiber and the associated perfect pairings (cf Remark \ref{newpp}), furnish local automorphism of $\tilde{V}$.  Since we work with a parahoric special orthogonal bundle $(V,q,F, s_X) \in Bun_X(\SO_q)$ and $Bun_X(\SO_q)$ is a component of  $ Bun_X(O_q)$ so the lift lies in the component $Bun_X(\SO^c_q)$ and therefore after Hecke-modification,  $(\tilde{V},\tilde{q},\tilde{F^\bullet})$ lies in $Bun_X(\SO_{\tilde{q}})$.

\end{proof}

Before continuing, we need to introduce a key definition that shows how to change weights {\it while preserving (semi)-stability}.

\begin{defi} \label{parabss}  If $\tilde{F}^i_x \subset \ker(\tilde{V}_x \ra K_x)$, then we assign it the weight $\tilde{\alpha}^i_x = {\alpha}^i_x$ where $\alpha^i_x$ is assigned to the inverse image of $\tilde{F}^i_x$ under $V_x \ra \tilde{V}_x$. Else, we assign it weight $\tilde{\alpha}^i_x={\alpha}^i_x -1$ where $\alpha^i_x$ is assigned to the image of $\tilde{F}^i_x$ in $\tilde{V}_x \ra K_x$. 
\end{defi}

\begin{rem} We see that $\tilde{V}_x$ has weight $-1/2$ when $m_x$ is odd and  weight $\alpha^{1+ m_x/2 }_x -1 $ if $m_x$ is even. 
 \end{rem}

 \begin{rem} We also see that if $\tilde{F}^1_x \neq \tilde{F}^{1 \perp}_x = \ker(\tilde{V}_x \stackrel{\tilde{q}_x}{\ra} \tilde{V}^*_x \ra \tilde{F}^{1*}_x)$ (in other words if $F^1_x \neq V_x$), then $\tilde{F}^{1 \perp}_x$ is assigned weight zero because being the image of $V_x/K_x \hra \tilde{V}_x$ it is assigned the weight of  $V_x$ which is zero in this case. 
 \end{rem}
 
\begin{rem} \label{weightsym} Since the weights $\{\alpha^i_\bullet\}$ are symmetric about half, so the weights $\{ \tilde{\alpha}^i_\bullet\}$ are {\it symmetrically distributed about zero}. 
\end{rem}
\begin{rem} We shall always consider the $\tilde{F}^i_x$ in the order of inclusion and not by the index $i$, which has got disturbed. The index $i$ is convenient to assign weights $\tilde{\alpha}_x^i$ using the weights $\alpha^i_x$. Under this order, we see that in Definition \ref{parabss} the parabolic weights are decreasing with the subspace becoming bigger in accordance with the definition in \cite[Mehta-Seshadri]{ms}. 
 \end{rem}

\begin{defi}
We define the parabolic degree of a sub-bundle $\tilde{W}$ of $\tilde{V}$ as $\pardeg(\tilde{W})=$ $$\deg(\tilde{W}) +  \sum_{x \in R} \sum_{1 \leq i \leq m_x} \tilde{\alpha}^i_x \dim(\Img ( \tilde{W}_x \cap \tilde{F}^i_x \ra  \tilde{F}^i_x/\tilde{F}^{i+1}_x)) .$$ We say that the parabolic orthogonal bundle $\tilde{V}$ is (semi)-stable if   $$\pardeg(\tilde{W})/\rank(\tilde{W}) (\leq) \pardeg(\tilde{V})/\rank(\tilde{V}).$$
\end{defi}

\begin{prop} 
\begin{enumerate}
\item $V$ is (semi)-stable as a parahoric orthogonal bundle if and only if the parabolic orthogonal  bundle   $(\tilde{V},\tilde{q}, \tilde{F}^\bullet_\bullet)$ supports a (semi)-stable parabolic orthogonal (resp. symplectic) structure with respect to the above definition of (semi)-stability.

\item For any isotropic sub-bundle $W$ of $V$, the parabolic degree of $W$ and $\tilde{W}$ are the same. 
\end{enumerate}
\end{prop}
\begin{proof}

 The first assertion is also only a check.
 Interpreting the parabolic degree as in Definition \ref{defss} of $W \subset V$ {\it intrinsically} in terms of $\tilde{W}$ we get
 $\deg(\tilde{W}) +  \sum_{x \in R} \sum_{1 \leq i \leq m_x} {\alpha}^i_x \dim(\Img ( \tilde{W}_x \cap \tilde{F}^i_x \ra  \tilde{F}^i_x/\tilde{F}^{i+1}_x))  - \dim(\Img(\tilde{W}_x \hra \tilde{V}_x \ra K_x)).$
 Now $ - \dim(\Img(\tilde{W}_x \hra \tilde{V}_x \ra K_x)) = \dim(\tilde{W}_x \cap \Img(V_x/K_x \hra \tilde{V}_x)) - \rank(\tilde{W})$ and the term $$\dim(\tilde{W}_x \cap \Img(V_x/K_x \hra \tilde{V}_x)) $$ can be accounted for by defining parabolic degree as 
 $$\deg(\tilde{W}) +  \sum_{x \in R} \sum_{1 \leq i \leq m_x} {\alpha}^{'i}_x \dim(\Img ( \tilde{W}_x \cap \tilde{F}^i_x \ra  \tilde{F}^i_x/\tilde{F}^{i+1}_x)) $$
and replacing the weights $\alpha^i_x$ by $ \alpha^{'i}_x$ defined as
\begin{eqnarray*}
\alpha^i_x + 1 & \text{if}& \tilde{F}^i_x \subset \ker(\tilde{V}_x \ra K_x) \\
 \alpha^i_x       & \text{if} & \text{otherwise}
\end{eqnarray*}
 Now the weights $\alpha^{'i}_x$ belong to the interval $[1/2,3/2]$. The term $-\rank(\tilde{W})$  can be accounted for by decreasing all the weights by one.  The sliding of weights does not affect the (semi)-stability properties. Now the new weights  are exactly $\tilde{\alpha}^i_x$ of Definition \ref{parabss} as desired. Now we also see that the parabolic degree has remained unchanged, as we have only interpreted that of $W$ in terms of $\tilde{W}$.
   
  \end{proof}


\subsection{Passage to generic bundles}

We say that two orthogonal bundles $E_0$ and $E_1$ can be deformed into each other if there is a connected complex space $T$, an orthogonal bundle on $\PP^1 \times T$ and  two points $x, y \in T$ such that $E|_{\PP^1 \times \{ x \}} \simeq E_0$ and $E|_{\PP^1 \times \{ y \}} \simeq E_1$.


 A.Ramanathan proved \cite[iii) of 9.5.1 and 9.5.2]{ramdef} for type $B_l$ and $D_l$ that every orthogonal bundle on $\PP^1$ is deformable to either the trivial bundle or $\cO(1)\oplus \cO(-1) \oplus \cO^{n-2}$. For the symplectic case, A.Ramanathan \cite[9.7, iii)]{ramdef} has proved that the trivial bundle on $\PP^1$ is rigid. This means that any symplectic bundle can be deformed to the trivial bundle.


\begin{prop} \label{redtotriv} Let $(\tilde{V}, \tilde{q}, \tilde{F}^\bullet_x, \alpha^\bullet_x)$ denote a parabolic special orthogonal bundle. It  is (semi)-stable if and only if  the bundle
\begin{eqnarray*}
\cO^n_{\PP^1} & \text{if} & \mu(\tilde{V}) =0 \\
\cO_{\PP^1}(1) \oplus \cO_{\PP^1}(-1) \oplus \cO_{\PP^1}^{n-2} & \text{if} & \mu(\tilde{V})=1
\end{eqnarray*} 
endowed with generic parabolic structure of type $(\tilde{F}^\bullet_x, \tilde{\alpha}^\bullet_x)$ is (semi)-stable.
A parabolic symplectic bundle $(\tilde{V}, \tilde{q}, \tilde{F}^\bullet_x, \alpha^\bullet_x)$ is (semi)-stable if and only if the trivial bundle with generic symplectic parabolic structure is (semi)-stable.
\end{prop}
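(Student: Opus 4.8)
The plan is to deduce both directions from the openness of (semi)-stability (Proposition \ref{openness}) together with the deformation classification of orthogonal and symplectic bundles on $\PP^1$ recalled just above. I would read the assertion as an equivalence at the level of \emph{supporting} a (semi)-stable parabolic structure of a fixed combinatorial type $\tau$ (the integers $m_x$, the flag dimensions $\dim\tilde F^i_x$, and the weights, $x\in R$), which is exactly what the non-emptiness criterion needs. A preliminary observation: because the orientation datum lets one regard the flag at each $x$ as a point of a partial flag variety $\SO_n/P_{\tau_x}$ of the \emph{connected} group $\SO_n$ (and in the symplectic case a $\Sp_{2n}/P$), the parabolic special orthogonal, resp.\ symplectic, structures of type $\tau$ on a fixed bundle form the \emph{irreducible} projective variety $\prod_{x\in R}\SO_n/P_{\tau_x}$; "generic parabolic structure" then unambiguously means a point of a dense open subset of it.

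Next I would build an interpolating family. By Hulek's theorem, resp.\ Ramanathan's rigidity of the trivial symplectic bundle, there is an irreducible base $T$ and a family of special orthogonal, resp.\ symplectic, bundles on $\PP^1\times T$ whose fibre over a chosen point $t_0\in T$ is $(\tilde V,\tilde q)$ with its orientation, and whose fibre over the generic point of $T$ is the model $E_0$ --- namely $\cO^n_{\PP^1}$ or $\cO_{\PP^1}(1)\oplus\cO_{\PP^1}(-1)\oplus\cO_{\PP^1}^{n-2}$ according to $\mu(\tilde V)$ in the orthogonal case, and the trivial bundle in the symplectic case. Over $T$ I would form the relative flag bundle $\cF\to T$ of parabolic structures of type $\tau$ on the members of this family: $\cF$ is irreducible, projective over $T$, it is a family of parabolic bundles of fixed parabolic datum in the sense of Proposition \ref{openness}, and the given bundle occurs as a point $p_0$ of the fibre $\cF_{t_0}\cong\prod_{x}\SO_n/P_{\tau_x}$.

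The core of the proof is then an open--closed dichotomy on $T$. By Proposition \ref{openness} the locus $\cF^{\mathrm{ss}}$ of semistable (resp.\ stable) members is open in $\cF$, so $\cF\setminus\cF^{\mathrm{ss}}$ is closed, hence proper over $T$, and $\{t\in T:\cF_t\cap\cF^{\mathrm{ss}}=\emptyset\}$ is closed in $T$; on the other hand $\{t:\cF_t\cap\cF^{\mathrm{ss}}\neq\emptyset\}$ is the image of the open set $\cF^{\mathrm{ss}}$ under the projection $\cF\to T$ and is open. Since $T$ is connected one of these two sets is empty, and therefore $\cF_{t_0}$ meets $\cF^{\mathrm{ss}}$ if and only if the generic fibre does. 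As $\cF_{t_0}$ and the generic fibre are both irreducible and $\cF^{\mathrm{ss}}$ is open, "$\cF_{t_0}$ meets $\cF^{\mathrm{ss}}$" is equivalent to "$(\tilde V,\tilde q)$ with generic parabolic structure of type $\tau$ is (semi)-stable", and "generic fibre meets $\cF^{\mathrm{ss}}$" is equivalent to "$E_0$ with generic parabolic structure of type $\tau$ is (semi)-stable". This is the asserted equivalence; for the application to non-emptiness one runs it with $\tilde V=E_0$ (for each of the two Mumford invariants in the orthogonal case), where the left-hand side is immediate. The symplectic statement follows verbatim from the trivial-bundle deformation.

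The step I expect to be the real obstacle is the construction of $\cF\to T$ over an \emph{irreducible} base carrying the parabolic flags and the orientation datum at once: one must check that the deformation of $\tilde V$ to $E_0$ furnished by Hulek/Ramanathan can be taken through \emph{special} orthogonal bundles, so that the fibres of $\cF$ are the irreducible flag varieties $\prod_x\SO_n/P_{\tau_x}$ rather than their disconnected $O_n$-analogues. It is precisely this use of the connectedness of $\SO_n$, unavailable for $O_n$, that makes the argument work here and is the reason the corresponding statement fails for $G=O_n$. Granting this family, everything else is formal: openness (Proposition \ref{openness}), properness of the relative flag bundle, and irreducibility of $G/P$ for connected $G$.
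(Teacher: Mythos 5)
Your overall strategy---interpolate between $(\tilde V,\tilde q)$ and the model bundle via Hulek/Ramanathan, put the flags in a family over the deformation base, and invoke Proposition \ref{openness}---is the same as the paper's, and your reading of the statement as being about \emph{supporting} a (semi)-stable structure of fixed combinatorial type is the right one. But the central step, the ``open--closed dichotomy'' on $T$, is broken. The sets $A=\{t:\cF_t\cap\cF^{\mathrm{ss}}=\emptyset\}$ and $B=\{t:\cF_t\cap\cF^{\mathrm{ss}}\neq\emptyset\}$ are complements of one another, so ``$A$ is closed'' and ``$B$ is open'' are the \emph{same} assertion, and connectedness of $T$ then yields nothing; you would need $B$ to be clopen. (Your justification of closedness of $A$ is also misapplied: properness of $\cF\setminus\cF^{\mathrm{ss}}\to T$ shows that $\{t:\cF_t\not\subseteq\cF^{\mathrm{ss}}\}$ is closed, which is a different set.) Moreover the dichotomy is not just unproved but false in the direction ``the generic fibre meets $\cF^{\mathrm{ss}}$ $\Rightarrow$ every fibre does'': a dense open $\cF^{\mathrm{ss}}$ can miss a special fibre entirely (no choice of flags with small weights makes $\cO(k)\oplus\cO(-k)\oplus\cO^{n-2}$ semistable for $k\gg 0$). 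What is actually available, and what the paper uses, is one-directional: the total space ($\cF$, or rather $\mathrm{Bun}_\cG$ in the paper) is irreducible, $\cF^{\mathrm{ss}}$ and the preimage of the locus where the underlying bundle is the split model $V_{gen}$ are both open, hence they meet as soon as both are nonempty. This gives ``some structure of type $\tau$ is (semi)-stable $\Rightarrow$ the generic structure on $V_{gen}$ is (semi)-stable''; the converse, in the existence reading, is immediate because the generic structure on $V_{gen}$ is itself an instance of the required parabolic datum. You should replace the dichotomy by this irreducibility argument.

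Second, you correctly flag as ``the real obstacle'' that the deformation must be carried out through \emph{special} orthogonal bundles with flags over an irreducible base, but you do not resolve it---and this is exactly where the paper does nontrivial work and takes a different route. The paper extends the flags only over a dense open subset of $T$ (using local isotriviality of $G$-bundles near each parabolic point) and then, instead of your relative flag bundle $\cF\to T$, passes to the stack $\mathrm{Bun}_\cG$ of torsors under the Bruhat--Tits group scheme for $\SO_n$. Since $\SO_n$ is not simply connected, irreducibility of the relevant component is not free: the paper invokes Heinloth's description of the components of $\mathrm{Bun}_\cG$ as quotients of the smooth irreducible $\mathrm{Bun}_{\tilde\cG}$ for the $\Spin$-lift by $H^1(X,\cZ^{fin})$, and then an orbit-dimension argument to see that each component contains a torsor whose underlying bundle is $V_{gen}$. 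Without either this component analysis or a proof that Hulek's deformation can be realized through oriented orthogonal bundles over an irreducible base, your argument is incomplete for $\SO_n$. (For the symplectic case, where $\Sp_{2n}$ is simply connected, the repaired version of your argument essentially coincides with the paper's.)
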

 \begin{proof}
By Ramanathan's theorems, it follows that $(\tilde{V},\tilde{q})$ can be put in a $T$-family over $\PP^1$ where the generic member $V_{gen}$ is the trivial bundle or $\cO(1) \oplus \cO(-1) \oplus \cO^{n-2}$ depending upon the Mumford invariant in the orthogonal case and the trivial bundle in the symplectic case. Since $G$-bundles are locally isotrivial, so for every parabolic point $w \in R$, there is a non-empty open subset $T_w \subset T$ and a neighbourhood $U_w$ of $w$, such that the restriction of $(\tilde{V}, \tilde{q})$ to $U_w \times T_w$ is trivial. Without loss of generality, we may assume that $T$ is irreducible and hence $T_w \subset T$ are dense open subsets. Thus on the intersection $\cap_{w \in R} T_w \subset T$ which is non-empty open and dense, the flags $\{ F^\bullet_w\}$ can be extended for every $w \in R$.  They can be endowed with the same weights. Thus replacing $T$ by $\cap_{w \in R} T_w$, we see that $(\tilde{V},\tilde{q}, \tilde{F}^\bullet_x, \tilde{\alpha}^\bullet_x)$ can 
be put in a family of {\it parabolic} orthogonal bundles endowed with parabolic structure of type $(\tilde{F}^\bullet_x, \tilde{\alpha}^\bullet_x)$ where the vector bundle underlying a generic object splits is $V_{gen}$. In the following, we replace $T$ by the connected component of $\cap_{w \in R}T_w$ containing $\tilde{V}$.

We first argue for the symplectic case as the group is simply connected.




The openness of (semi)-stability in a family is assured by Proposition \ref{openness}. So the two open sets corresponding to $\cP$ such that its underlying bundle is $V_{gen}$ and to (semi)-stable $\cP$ must intersect since ${\rm Bun}_{\cG}$ is irreducible. It follows that the bundle $V_{gen}$ for a generic Lagrangian flag supports a (semi)-stable parahoric symplectic structure. 


For the case of parahoric special orthogonal bundles, we have to argue a little more because $\SO_n$ is not simply connected. 


To complete the proof we introduce some notation from \cite{heinloth}. 

Let $\cG_X$ denote the Bruhat--Tits group scheme which is `parahoric for $\SO_n$ at the parabolic points'  and let $\tilde{\cG_X}$ be its lift to the `parahoric for $Spin$' type Bruhat--Tits group scheme. The way to do this is explained on \cite[page 513]{heinloth}. 
Let $\cZ^{fin} \ra X$ denote the kernel group scheme of the morphism $\tilde{\cG} \ra \cG$. Now ${\rm Bun}_{\cG}$ is again disconnected and its components are parametrized by $\ol{H^2(X, \cZ^{fin})}$ by \cite[Lemma 14, part (4) applied to (3) and Lemma 15]{heinloth}, which for our purposes is a certain quotient of $H^2(X, \cZ^{fin})$ and hence {\it finite}. Each of its connected components is isomorphic to the quotient of ${\rm Bun}_{\tilde{\cG}}$ under the action of $H^1(X, \cZ^{fin})$. Again since ${\rm Bun}_{\tilde{\cG}}$ is smooth, so this quotient is irreducible. This quotient must contain a $\cG$-torsor whose underlying bundle is $V_{gen}$ or else it will be a union of orbits of non-trivial bundles whose orbits we know are of strictly lesser dimension than that of ${\rm Bun}_{\cG}$. Thus $\cG$-torsors whose underlying bundle is actually $V_{gen}$ will form an open dense subset. Now we can conclude as in the symplectic case.
\end{proof}
 
\subsection{Recall of Schubert states and Gromov--Witten numbers} \label{gwn} We recall that $R$ denotes the set of parabolic points. For $w \in R$, we consider generic complete orthogonal grassmanian $G^{\bullet}_{w}$ on $\tilde{V}_w$.
 
 
 For a subset $I = \{ i_1, \cdots, i_r \} \subset \{1, \cdots, n \} $, define the Schubert variety
 $$\Omega^O_{I}(G^{\bullet})= \{ L \in Gr(r,\tilde{V}_w) | \dim(L \cap G^{i_j}) \geq j \, \text{for all} \, 1 \leq j \leq r \}.$$

  
\begin{defi} Let $Gr(r,n)$ denote the Grassmanian of isotropic subspaces of dimension $r$ in a vector space of dimension $n$ with a non-degenerate quadratic form. For subsets $I_w \subset \{ 1, \cdots n \}$ of cardinality $r$ we denote by $<\{\sigma_{I_w}\}_{w \in R}>_d$ Gromov--Witten numbers defined as the number of maps $ f : \PP^1 \ra Gr(r,n)$ of degree $ d$ such that for $ w \in R$  we have $ f(w) \in \Omega^O_{I_w}(G^{\bullet}_w) $.
\end{defi}

  
In the language of vector bundles, the Gromov--Witten number counts therefore the number of isotropic sub-bundles $W$ of the trivial bundle of degree $-d$ and rank $r$ such that the fiber $W_w$, for $w \in R$ a parabolic point, lies in the Schubert variety $\Omega^O_{I_w}(G^\bullet_w)$.
 
We now describe a slight generalisation of Gromov--Witten numbers ( for more details cf. also \cite[Sections 1.5 and 3]{belkaletifr}) to also treat the bundle $\cO(1) \oplus \cO(-1) \oplus \cO^{n-2}$. So more generally let $W$ be a vector bundle on $\PP^1$ such that $W^* \simeq W$. Define $Gr(d,r,W)$ to be the moduli space of isotropic sub-bundle of $W$ of rank $r$ and degree $d$. For $p \in \PP^1$, define  projection maps $\pi_p: Gr(d,r,W) \ra Gr(r, W_p)$ to the fiber of $W$ at $p$. We call a Schubert State  $\mathfrak{I}= (d,r,W, \{ I_w \}_{w \in R})$ where $I_w \subset \{ 1, \cdots, n \}$ of cardinality $r$ and $d$ is an integer. For a Schubert state $\mathfrak{I}$ define $<\mathfrak{I}>$ to be the number of points in the intersection (if finite and 0 otherwise) $$ \Omega^O(\mathfrak{I}, W, G^\bullet)= \cap_{w \in R} \pi_w^{-1} [ \Omega^O_{I_w}(G^\bullet_w)] \subset Gr(d,r,W).$$
 

\begin{rem} In \cite{belkale},  if $\dim(\mathfrak{I}) \neq 0$ then one defines $<\mathfrak{I}>=0$. We shall not do so to be able to handle stability.
\end{rem}

In the context of semi-stability, Gromov--Witten number being one has been exploited in many papers (cf \cite{tw}, \cite{belkale}, \cite{aw}).
 

 \subsection{Formulation of inequalities}
We refer the reader to  Proposition \ref{parah2parab} and \ref{redtotriv} for the notations. In particular  $\tilde{\alpha}^\bullet_x$ are deduced from $\alpha^\bullet_x$ as in Definition \ref{parabss}.

 Let $\lambda_{I_w}(\tilde{\alpha}^\bullet_w)$ denote $\sum_{i \in I_w} \tilde{\alpha}^i_w$. 
 
\begin{thm} \label{conditions} There exists a semi-stable (resp. stable) parahoric special orthogonal bundle with parabolic datum $\{F^{\bullet}_w, \alpha^{\bullet}_w \}_{w \in R}$ if and only if either of the following conditions holds
\begin{enumerate}
\item given any $1 \leq r \leq n/2$ and any choice of subsets $\{I_w\}_{w \in R}$ of cardinality $r$ of $\{1, \cdots, n\}$, whenever $<\{ \sigma_{I_w}\}_{w \in R}>_d=1$  then ${\sum_{w \in R}  \lambda_{I_w}(\tilde{\alpha}^{\bullet}_w) -d} \leq 0.$ 
\item Let $W=\cO(1) \oplus \cO(-1) \oplus \cO^{n-2}$. For every Schubert State $\mathfrak{I}=(d, r, W, \{ I_w \}_{w \in R})$, whenever $<\mathfrak{I}>=1$, then for $I_w \in \mathfrak{I}$, we should have ${\sum_{w \in R}  \lambda_{I_w}(\tilde{\alpha}^{\bullet}_w) -d} \leq 0.$
\end{enumerate} 
Similarly, for stability either of the following conditions should hold
\begin{enumerate}
\item   whenever $<\{ \sigma_{I_w}\}_{w \in R}>_d \neq 0 $ or is $\infty$ then ${\sum_{w \in R}  \lambda_{I_w}(\tilde{\alpha}^{\bullet}_w) -d} < 0.$
\item   whenever $<\mathfrak{I}> \neq 0 $ or is $\infty$, then for $I_w \in \mathfrak{I}$, we should have ${\sum_{w \in R}  \lambda_{I_w}(\tilde{\alpha}^{\bullet}_w) -d} < 0.$
\end{enumerate}
 \end{thm}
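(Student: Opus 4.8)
The plan is to chain together the three reduction steps established earlier and then invoke the known orthogonal Horn/Hermitian-type inequalities for parabolic bundles on $\PP^1$. First I would use the dictionary from Section \ref{ai}: by Theorem \ref{gammaGpt} (together with the discussion translating conjugacy-class data to parabolic data), the existence of a semi-stable (resp. stable) parahoric special orthogonal bundle with datum $\{F^\bullet_w,\alpha^\bullet_w\}_{w\in R}$ is equivalent to the existence of a semi-stable (resp. stable) $\Gamma$-$\SO_n$-bundle, which by Theorem \ref{pison} is the same as the existence of a semi-stable (resp. stable) parahoric special orthogonal bundle $(V,q,F^\bullet_\bullet,\cO_X)$. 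Then Proposition \ref{parah2parab} (parts (5), (6), (7)) transports this to the existence of a semi-stable (resp. stable) \emph{parabolic} special orthogonal bundle $(\tilde V,\tilde q,\tilde F^\bullet_x,\tilde\alpha^\bullet_x)$, where the weights $\tilde\alpha^\bullet_x$ are exactly those of Definition \ref{parabss} and are symmetric about zero by Remark \ref{weightsym}; crucially, $\pardeg$ is preserved, so the (semi)-stability inequality on isotropic sub-bundles is literally the inequality $\pardeg(\tilde W)\ (\leq)\ 0$ since $\pardeg(\tilde V)=0$.

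Next I would apply Proposition \ref{redtotriv}: the parabolic bundle $(\tilde V,\tilde q,\ldots)$ supports a (semi)-stable structure if and only if the \emph{fixed} underlying bundle $W_0$, equal to $\cO^n_{\PP^1}$ when $\mu(\tilde V)=0$ and $\cO(1)\oplus\cO(-1)\oplus\cO^{n-2}$ when $\mu(\tilde V)=1$, endowed with a \emph{generic} parabolic structure of type $(\tilde F^\bullet_x,\tilde\alpha^\bullet_x)$, is (semi)-stable. This is what produces the two alternative conditions in the statement: in the $\mu=0$ case the relevant isotropic sub-bundles are sub-bundles of the trivial bundle, i.e. degree-$d$ maps $\PP^1\to Gr(r,n)$, and for a \emph{generic} flag a given combinatorial type $(d,r,\{I_w\})$ is actually realized by some isotropic sub-bundle with the fiber $W_w$ lying in the Schubert cell $\Omega^O_{I_w}(G^\bullet_w)$ precisely when the corresponding Gromov--Witten number $<\{\sigma_{I_w}\}>_d$ is nonzero; in the $\mu=1$ case one replaces this by the Schubert-state count $<\mathfrak I>$ for $W=\cO(1)\oplus\cO(-1)\oplus\cO^{n-2}$ as set up in subsection \ref{gwn}.

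Finally I would carry out the numerical translation. For a generic flag, the parabolic degree of the isotropic sub-bundle $W$ attached to a Schubert datum $(d,r,\{I_w\})$ equals $-d+\sum_{w\in R}\lambda_{I_w}(\tilde\alpha^\bullet_w)$ — the $-d$ is $\deg W$, and $\lambda_{I_w}(\tilde\alpha^\bullet_w)=\sum_{i\in I_w}\tilde\alpha^i_w$ collects the weight contributions dictated by the Schubert position $I_w$ (here genericity ensures the induced flag intersections have the dimensions forced combinatorially by $I_w$, so the parabolic-degree formula of Definition \ref{parabss} evaluates to exactly this). Hence semi-stability (parabolic degree $\le 0$ for every isotropic sub-bundle) holds iff for every $(d,r,\{I_w\})$ with the relevant count equal to $1$ we have $\sum_w\lambda_{I_w}(\tilde\alpha^\bullet_w)-d\le 0$ — and it suffices to test count $=1$ rather than count $\ne 0$ because, as explained in the remark following Definition of Gromov--Witten numbers, a Schubert state with count $\ge 2$ or $\infty$ can never be a maximal destabilizing type: any destabilizer can be replaced by a more destabilizing one of count $1$ via the Harder--Narasimhan maximality. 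For stability one needs strict inequality for \emph{every} realized type, whence the condition ranges over all $(d,r,\{I_w\})$ with count $\ne 0$ or $\infty$. One also restricts to $1\le r\le n/2$ since an isotropic sub-bundle of the $n$-dimensional orthogonal bundle has rank at most $\lfloor n/2\rfloor$.

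I expect the main obstacle to be the last step: verifying carefully that, for a \emph{generic} flag, the parabolic degree of the sub-bundle attached to a Schubert state is exactly $-d+\sum_w\lambda_{I_w}(\tilde\alpha^\bullet_w)$ — i.e. that genericity forces all the dimensions $\dim(\mathrm{Img}(\tilde W_w\cap\tilde F^i_w\to\tilde F^i_w/\tilde F^{i+1}_w))$ in Definition \ref{parabss} to take their combinatorially minimal/expected values — and the bookkeeping needed to handle the $\mu(\tilde V)=1$ case, where $W$ is not trivial, so one must use the generalized Schubert-state count $<\mathfrak I>$ and argue that the genus-zero enumeration over $\cO(1)\oplus\cO(-1)\oplus\cO^{n-2}$ still detects exactly the realizable destabilizing types. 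The reduction steps themselves (Theorems \ref{pison}, \ref{gammaGpt}, Propositions \ref{parah2parab}, \ref{redtotriv}, \ref{openness}) are already in hand, so the argument is essentially assembling them and doing this final combinatorial identification.
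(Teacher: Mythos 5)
Your proposal follows essentially the same route as the paper: reduce via Proposition \ref{parah2parab} and Proposition \ref{redtotriv} to a generic parabolic structure on the fixed bundle determined by the Mumford invariant, identify the parabolic degree of a Schubert-positioned isotropic sub-bundle with $\sum_{w}\lambda_{I_w}(\tilde{\alpha}^{\bullet}_w)-d$, and use Harder--Narasimhan uniqueness to justify restricting the semi-stability test to Gromov--Witten number one. The paper likewise treats the degree computation as a routine check, so your assembly of the reductions and the count-one argument matches its proof.
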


The proof is simply a straightforward translation of the condition on the generic parabolic bundle with prescribed parabolic datum to be semi-stable or stable. This has been done in \cite{tw}, \cite{belkale}, \cite{belkaletifr}, so we omit it.

Similarly we get
\begin{thm} \label{conditionssymp} There exists a semi-stable parahoric symplectic bundle with parabolic datum $\{F^{\bullet}, \alpha^{\bullet} \}_{w \in R}$ if and only if given any $1 \leq r \leq n/2$ and any choice of subsets $\{I_w\}_{w \in R}$ of cardinality $r$ of $\{1, \cdots, n\}$, whenever $<\{ \sigma_{I_w}\}_{w \in R}>_d=1$ then ${\sum_{w \in R}  \lambda_{I_w}(\tilde{\alpha}^{\bullet}_w) -d} \leq 0.$ Similarly, for stability whenever $<\{ \sigma_{I_w}\}_{w \in R}>_d \neq 0 $ or is $\infty$ then $${\sum_{w \in R}  \lambda_{I_w}(\tilde{\alpha}^{\bullet}_w) -d} < 0.$$
\end{thm}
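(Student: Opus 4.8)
The plan is to mirror the proof of Theorem \ref{conditions}, since the symplectic case is genuinely simpler: the group $\Sp_{2n}$ is simply connected and the trivial symplectic bundle on $\PP^1$ is rigid by Ramanathan's result cited above, so there is no Mumford-invariant dichotomy to track. First I would reduce, without loss of generality, to the case where the ambient parahoric symplectic bundle $(V,q,F^\bullet,\alpha^\bullet)$ has parabolic degree zero, which is harmless after sliding weights. Then I would invoke the symplectic analogue of Proposition \ref{parah2parab} (the passage from parahoric to parabolic via a Hecke modification by a maximal Lagrangian $K_x$ inside $F^1_x$, using that $\dim F^1_x$ is even by Theorem \ref{parahoricsymp}), to replace $(V,q,F^\bullet,\alpha^\bullet)$ by a parabolic symplectic bundle $(\tilde V,\tilde q,\tilde F^\bullet,\tilde\alpha^\bullet)$ with weights symmetric about zero, in such a way that (semi)-stability of the former is equivalent to (semi)-stability of the latter in the sense of the symplectic counterpart of Definition \ref{parabss}.

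Next I would apply the symplectic half of Proposition \ref{redtotriv}: since every symplectic bundle on $\PP^1$ deforms to the trivial bundle and (semi)-stability is open in families (Proposition \ref{openness}), the bundle $(\tilde V,\tilde q,\tilde F^\bullet,\tilde\alpha^\bullet)$ is (semi)-stable if and only if the \emph{trivial} symplectic bundle $\cO^{2n}_{\PP^1}$ with a \emph{generic} Lagrangian parabolic structure of the same type is (semi)-stable. At this point the problem is purely combinatorial: a (semi)-stability violation is witnessed by an isotropic sub-bundle $W$ of some rank $r$ and degree $-d$ of the trivial bundle whose fibers at the parabolic points $w\in R$ lie in the prescribed Schubert varieties $\Omega^O_{I_w}(G^\bullet_w)$, and the existence of such a $W$ for generic flags is exactly the statement that the Gromov--Witten number $\langle\{\sigma_{I_w}\}_{w\in R}\rangle_d$ is nonzero. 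One computes directly that the parabolic degree of such a $W$ equals $\sum_{w\in R}\lambda_{I_w}(\tilde\alpha^\bullet_w)-d$, so the ambient bundle fails to be semi-stable precisely when this quantity is positive for some $(r,\{I_w\},d)$ with $\langle\{\sigma_{I_w}\}\rangle_d\neq 0$.

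For the forward direction (inequalities $\Rightarrow$ (semi)-stability) I would argue contrapositively: if the generic trivial symplectic parabolic bundle is not (semi)-stable, take its Harder--Narasimhan destabilizing isotropic sub-bundle $W$ (HN filtrations exist here because parahoric symplectic bundles correspond to $\Gamma$-$\Sp_{2n}$-bundles by Theorem \ref{parahoricsymp}, hence to $G$-bundles to which Ramanathan's theory applies), read off the subset $I_w$ from the jump pattern of $\dim(G^\bullet_w\cap W_w)$, and check that the uniqueness clause in the HN theorem forces $\langle\{\sigma_{I_w}\}\rangle_{d_W}=1$ — any competing $M$ of the same rank and fiber position would have $\pardeg(M)\ge\pardeg(W)$, hence $\pardeg(M)=\pardeg(W)$ by maximality, hence $M=W$. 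Then $\pardeg(W)>0$ contradicts the corresponding inequality. The restriction $1\le r\le n/2$ in the statement comes from the symplectic-complementation symmetry (sub-bundles of rank $>n$ are handled by passing to orthogonal complements), which must be checked. The bound for irreducibility/stability follows because, as the Schubert data vary over the parabolic points, the parabolic slope stays fixed, so one only changes whether the strict inequality is needed; the Gromov--Witten numbers are computable when finite and one can algorithmically detect when they are infinite.

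The main obstacle I anticipate is not in any single step but in verifying that the symplectic analogues of Propositions \ref{parah2parab} and \ref{redtotriv} actually go through as claimed — in particular, that the Hecke modification by Lagrangian subspaces produces a genuinely everywhere non-degenerate symplectic form $\tilde q$ and that the resulting parabolic bundle lands in the correct (connected, since $\Sp_{2n}$ is simply connected) component of the stack $\mathrm{Bun}_X$, so that the deformation-to-trivial argument is legitimate. These are stated to be parallel to the orthogonal case, but the bookkeeping of flags, weights, and the symplectic-complementation convention is where a careful writer would need to be most vigilant; everything downstream of that reduction is then a formal replay of the proof of Theorem \ref{conditions}.
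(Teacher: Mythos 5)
Your proposal is correct and follows essentially the same route the paper intends: the paper gives no separate proof for the symplectic case (it says only ``Similarly we get''), and your argument is precisely the proof of Theorem \ref{conditions} transported through the symplectic analogues of Propositions \ref{parah2parab} and \ref{redtotriv}, with the simplifications you correctly identify ($\Sp_{2n}$ simply connected, trivial bundle rigid, no Mumford-invariant case split). Your closing caveat about verifying the Lagrangian Hecke modification and the component bookkeeping is exactly the content the paper leaves implicit.
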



The following proposition is a slight generalisation of a proposition of Ramanathan \cite[Prop 7.1]{rama}. For the sake of completeness state it fully because though $\Gamma$-semi-stability is equivalent to semi-stability but $\Gamma$-stability is {\it weaker} than stability. Its proof is a straightforward generalization, so we omit it.

\begin{prop} \label{rama} Let $G$ and $H$ be reductive algebraic groups and $\phi: G \ra H$ be a surjective homomorphism. Let $E$ be a $\Gamma$-$G$ bundle and $E'$ the $\Gamma$-$H$-bundle obtained by extension of structure group by $\phi$. Then if $E'$ is $\Gamma$-stable (resp $\Gamma$-semi-stable) then $E$ is $\Gamma$-stable (resp $\Gamma$-semi-stable). If further $N =\ker \phi \subset Z$ then conversely if $E$ is $\Gamma$-stable (resp. $\Gamma$-semi-stable) then $E'$ is $\Gamma$-stable (resp. $\Gamma$-semi-stable).
\end{prop}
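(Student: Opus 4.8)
The plan is to follow the strategy of Ramanathan's Proposition 7.1, adapting it to the $\Gamma$-equivariant setting and paying attention to the fact that $\Gamma$-stability is genuinely weaker than stability. First I would recall that $\Gamma$-semistability of a $\Gamma$-$G$-bundle $E$ is equivalent to ordinary semistability of the underlying $G$-bundle, so the semistability assertions in both directions reduce to the non-equivariant statement; for that I can either invoke the classical result that extension of structure group along a surjection $\phi:G\to H$ preserves and (when $\ker\phi\subset Z(G)$) reflects semistability of principal bundles, or reprove it via the adjoint-bundle criterion $\bigl(\mathrm{Ad}(E)$ semistable $\Leftrightarrow E$ semistable$\bigr)$ noting that $\phi$ induces an isomorphism of Lie algebras modulo a central (hence degree-zero, slope-zero) piece. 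So the real content is the two stability assertions.

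For the forward direction ($E'$ $\Gamma$-stable $\Rightarrow$ $E$ $\Gamma$-stable) I would argue contrapositively. Suppose $E$ is not $\Gamma$-stable: there is a maximal parabolic $P\subset G$, a $\Gamma$-invariant reduction $\sigma:Y\to E(G/P)$, and a non-trivial dominant character $\chi:P\to\mathbb{G}_m$ with $\deg\chi_*\sigma^*E\geq 0$. The image $Q=\overline{\phi(P)}$ is a parabolic subgroup of $H$ (the image of a parabolic under a surjection of reductive groups is parabolic), though possibly not maximal; I would then choose a maximal parabolic $Q'\supset Q$ of $H$. The reduction $\sigma$ of $E$ to $P$ induces, via $\phi$, a $\Gamma$-invariant reduction $\sigma'$ of $E'$ to $Q$, and further to $Q'$. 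A dominant character of $Q'$ (or of $Q$) pulls back along $\phi|_P:P\to Q$ to a dominant character of $P$; conversely, and this is the point, one shows that the destabilizing character $\chi$ on $P$ can be matched — up to a non-trivial dominant character of $Q'$ whose pullback has non-negative degree — because $\ker\phi\subset Z(G)$ is irrelevant here only in the reverse direction, while in the forward direction one uses that $P\to Q$ is surjective with central kernel contained in $Z(G)\cap P$ and every dominant character of $P$ is already a dominant character of $Q$ (dominant characters are trivial on $Z_0(G)$). Hence the degree inequality transports to $E'$, contradicting $\Gamma$-stability of $E'$. The technical care is in checking that "dominant" is preserved both ways under $\phi^*$ between character lattices of $P$ and $Q$; this is where I expect the main bookkeeping, and it is essentially Ramanathan's Lemma on parabolic reductions under homomorphisms.

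For the converse, assume $N=\ker\phi\subset Z(G)$ and $E$ is $\Gamma$-stable; suppose $E'$ is not $\Gamma$-stable, witnessed by a maximal parabolic $Q\subset H$, a $\Gamma$-invariant reduction $\sigma':Y\to E'(H/Q)$, and a non-trivial dominant $\chi':Q\to\mathbb{G}_m$ with $\deg\chi'_*\sigma'^*E'\geq 0$. Set $P=\phi^{-1}(Q)$, which is a parabolic of $G$ since $N\subset Z(G)\subset P$ and $P/N\cong Q$; because $N$ is central and finite-dimensional, $P$ is maximal in $G$ iff $Q$ is maximal in $H$, so $P$ is maximal. The reduction $\sigma'$ lifts canonically to a $\Gamma$-invariant reduction $\sigma$ of $E$ to $P$ — here one uses $E'=E(H)=E\times^G H$ and that $E(G/P)=E'(H/Q)$ since $G/P\cong H/Q$ — and this lift is automatically $\Gamma$-invariant since $\sigma'$ is. The character $\chi'\circ\phi|_P:P\to\mathbb{G}_m$ is non-trivial (as $\phi|_P$ is surjective onto $Q$) and dominant, and $\chi'_*\sigma'^*E'$ and $(\chi'\phi)_*\sigma^*E$ differ by a line bundle associated to a character of $N\subset Z(G)$, which on a connected curve necessarily has degree zero (a character of a finite central group scheme, or the relevant twist, contributes nothing to the degree). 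Therefore $\deg(\chi'\phi)_*\sigma^*E=\deg\chi'_*\sigma'^*E'\geq 0$, contradicting $\Gamma$-stability of $E$. The one subtlety — and what makes this a "slight generalisation" rather than a verbatim copy — is precisely the remark in the excerpt that $\Gamma$-stability is weaker than stability: one must make sure the $\Gamma$-invariant reduction on the $G$-side is genuinely the pullback of the one on the $H$-side and that no extra $\Gamma$-reductions of $E$ (not coming from $E'$) can cause trouble; but since we are proving the implication $E$ $\Gamma$-stable $\Rightarrow E'$ $\Gamma$-stable we only ever transport reductions from $H$ to $G$, so this causes no difficulty. I expect the main obstacle to be the careful verification that parabolicity, maximality, and dominance of characters are preserved under $\phi$ and $\phi^{-1}$ when $\ker\phi$ is central; everything else is a formal transport of reductions and a degree computation.
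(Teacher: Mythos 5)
Your overall strategy matches the paper's for the forward implication (transport a destabilizing $\Gamma$-reduction from $E$ to $E'$ via $\phi(P)$), but for the converse --- which is the substantive half, the one needing $N=\ker\phi\subset Z$ --- you take a genuinely different and more direct route. The paper writes down the two exact sequences $1\ra N\ra G\ra H\ra 1$ and $1\ra N\ra P\ra P'\ra 1$, passes to the long exact sequences of non-abelian cohomology sets $H^1(Y,\Gamma,N)\ra H^1(Y,\Gamma,P)\ra H^1(Y,\Gamma,P')\ra H^2(Y,\Gamma,N)$, lifts the $\Gamma$-$P'$-reduction of $E'$ to some $F\in H^1(Y,\Gamma,P)$, and then adjusts $F$ by the action of $H^1(Y,\Gamma,N)$ so that its extension to $G$ is exactly $E$. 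You instead observe that $N\subset Z\subset P$ forces $G/P\stackrel{\sim}{\ra}H/P'$, hence $E(G/P)\simeq E'(H/P')$ canonically and $\Gamma$-equivariantly, so the given $\Gamma$-invariant section transports with no lifting or adjustment needed; this is cleaner and sidesteps the cohomological bookkeeping, at the cost of being tied to the central-kernel hypothesis (the paper's torsor-theoretic formulation is the one that generalizes). Two small inaccuracies in your write-up: the line bundles $\chi'_*\sigma'^*E'$ and $(\chi'\circ\phi)_*\sigma^*E$ do not merely ``differ by a degree-zero twist'' --- they are equal, since $(\chi'\circ\phi|_P)_*\sigma^*E=\chi'_*\bigl((\phi|_P)_*\sigma^*E\bigr)=\chi'_*\sigma'^*E'$; and your hedging in the forward direction about $\phi(P)$ failing to be maximal is a real issue only for non-central kernels (where the forward implication as stated is genuinely problematic, e.g.\ for a projection $G_1\times G_2\ra G_1$), but the paper's own proof asserts maximality of $\phi(P)$ without comment and every application in the paper is a central isogeny, where both your argument and the paper's go through.
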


\begin{rem} \label{spin} By Proposition \ref{rama}, the question of determining  the existence of a (semi)-stable parahoric $Spin_n$ bundle reduces to the question of existence of a parahoric $\SO_n$ bundle, which has been answered by Theorem \ref{conditions}. For this we only have to note that the conjugacy classes of $\Spin_n$ determine conjugacy classes of $\SO_n$.
\end{rem}

\begin{rem} \label{croscheck} To cross-check our inequalities with those of \cite{tw}, since the semi-stable polytope $\Delta^{ss}$ is known to be a convex, closed polytope of maximal dimension, it suffices to check the equality for {\it generic weights} i.e the same set of weights are admissible for both of the inequalities. Notice that if the weights are generic, more precisely if $1/2$ does not occur as a weight, then the set of original weights $\{ \alpha_w^\bullet \}$ and set of the new weights $\{ \tilde{\alpha_w^\bullet}\}$ are the same (though ordering has changed) upto translation by adding one. But (semi)-stability is invariant under translation of weights. The rest of the proof, namely passage to generic bundle and formulation of inequalities, is the same as that of \cite{tw}. So in the case of generic weights we get the same inequalities.  Hence by taking closure we get the same set of inequalities for semi-stability.
\end{rem}

\section{Key Examples of Usha Bhosle showing $\Delta^s \setminus (\Delta^{ss})^\circ \neq \emptyset$}
The aim of this section is to give an example of a point in the stable polytope not contained in the interior of the semi-stable polytope. The reason for doing so is the following proposition.
\begin{prop}[P.Belkale] \label{belkale} Let $(\Delta^{ss})^\circ$ denote the interior of the semi-stability polytope. We have $(\Delta^{ss})^\circ \subset \Delta^s$.
\end{prop}
\begin{proof} Suppose $p \in (\Delta^{ss})^\circ$ but not in $\Delta^s$. Then by Theorems \ref{conditions} and \ref{conditionssymp} it follows that some strict inequality $J$ corresponding to Gromov-Witten number $\neq \{ 0,1\}$ is not satisfied. Let $B$ be a ball inside $(\Delta^{ss})^\circ$ containing $p$. Thus for a hemisphere of points $q \in B$, we have $J(q)>0$. So $q$ violating $J$ cannot be semi-stable either. However since $B \subset (\Delta^{ss})^\circ$, we get a contradiction.
\end{proof}

Let $X \ra \PP^1$ be a hyper-elliptic curve. We denote by $i$ the involution.
This entire section is based on the examples by Usha Bhosle of stable $i$-$\SO_n$ and $i$-$\Sp$ bundles in \cite[Prop 2.4]{bhosledeg} and \cite{bhosle}. For simplicity, to illustrate our purpose we treat the case of $\SO_n$ and moreover $n=4k$. Cases of other $n$ and $\Sp$ are similar. In \cite[Lemma 1.8, Case (i)]{bhosle}, for $n=4k$, it is shown that for the quadratic form $\begin{pmatrix} 0 & \Id \\ \Id & 0 \end{pmatrix}$, the following set of order {\it two} elements in $\SO_n(\CC)$  
$$\{ M= \begin{pmatrix} 0 & \Id \\ \Id & 0 \end{pmatrix} , N= \begin{pmatrix}0 & \ul{\lambda} \\ \ul{\lambda^{-1}} & 0 \end{pmatrix} | \ul{\lambda}=(\lambda_1, \cdots, \lambda_n) , \lambda_i \neq \lambda_j, \lambda_i \neq 0\}$$
is shown to be an {\it irreducible set}. Thus there exists a stable $i$-$\SO_{4k}$ bundle corresponding to these conjugacy classes of $M$ and $N$ taken two times each (the remaining can be taken to be identity). Note here that a trivial extension of unitary bundle theory to non-simply connected groups is used here.

An easy calculation by Gaussian elimination shows that the characteristic polynomial of any such element is $(x^2 -1)^{2k}$. Therefore these elements may be conjugated to elements in the standard torus $T$ of $\SO_n$ with $2k$ many $-1$ and $1$ on the diagonal. Furthermore by action of $N(T)$ which allows us to change the sign of an even number of entries, we may further conjugate any such element to the standard form $\ol{C} = (-1, \cdots, -1, 1, \cdots, \cdots, 1, -1, \cdots, -1)$. Note here that though $\SO_n$ is not simply connected but the maps $T_{\Spin} \ra T_{\SO_n}$, $N(T_{\Spin}) \ra N(T_{\SO})$ induce identity on $W(\Spin) \ra W(\SO_n)$. This justifies the  conjugation by $N(T_{\SO})$ mentioned above.

Now notice that this conjugacy class lies on the far wall of the Weyl alcove of $\SO_n$ which is given by $\alpha_0= \varpi_1 + \varpi_2$ and also on the Wall given by $\alpha_1$.
Thus this conjugacy class being on the far wall of the Weyl alcove must be on some facet of the semi-stability polytope. Hence we obtain an example of a point in the stable polytope not belonging to the interior of the semi-stable polytope.

\bibliographystyle{plain}
\bibliography{qbun}

\end{document}